\newtheorem{theorem}{Theorem}[section]
\newtheorem{lemma}[theorem]{Lemma}
\newtheorem{corollary}[theorem]{Corollary}
\newtheorem{conjecture}[theorem]{Conjecture}
\newtheorem{question}[theorem]{Question}
\theoremstyle{definition}
\newtheorem{definition}[theorem]{Definition}
\newtheorem{example}[theorem]{Example}
\newtheorem{remark}[theorem]{Remark}
\renewcommand{\geq}{\geqslant}
\renewcommand{\leq}{\leqslant}
\def \cP {{\cal P}}
\def \cS {{\cal S}}
\definecolor{darkgreen}{RGB}{34, 150, 65}
\title{
Weak colourings of Kirkman triple systems
}
\author{Andrea C.~Burgess\thanks{Department of Mathematics and Statistics, University of New Brunswick, Saint John, NB, E2L~4L5, Canada.
\texttt{andrea.burgess@unb.ca}}
\and
Nicholas J.~Cavenagh\thanks{Department of Mathematics, The University of Waikato, Private Bag 3105, Hamilton 3240, New Zealand.
\texttt{nicholas.cavenagh@waikato.ac.nz}}
\and
Peter Danziger\thanks{Department of Mathematics, Toronto Metropolitan University, Toronto, ON, M5B~2K3, Canada.
\texttt{danziger@torontomu.ca}}
\and
David A.~Pike\thanks{Department of Mathematics and Statistics, Memorial University of Newfoundland, St.~John's, NL, A1C~5S7, Canada.
\texttt{dapike@mun.ca}}
}
\begin{document}

\maketitle

\begin{abstract}
A $\delta$-colouring of the point set of a block design is said to be {\em weak} if no block is monochromatic.
The {\em chromatic number} $\chi(S)$ of a block design $S$
is the smallest integer $\delta$ such
that $S$ has a weak $\delta$-colouring.
It has previously been shown that any Steiner triple system
has chromatic number at least $3$ and that for each $v\equiv 1$ or $3\pmod{6}$ there exists a Steiner triple system on $v$ points that has chromatic number $3$. Moreover, for each integer $\delta \geq 3$ there exist infinitely many Steiner triple systems with chromatic number $\delta$.

We consider colourings of the subclass of Steiner triple systems which are resolvable.
A {\em Kirkman triple system} consists of a resolvable Steiner triple system together with a partition of its blocks into parallel classes.
We show that for each $v\equiv 3\pmod{6}$ there exists a Kirkman triple system on $v$ points with chromatic number $3$.  We also show that for each integer $\delta \geq 3$, there exist infinitely many Kirkman triple systems with chromatic number $\delta$.  We close with several open problems.
\end{abstract}

\textbf{Keywords:}  Triple systems, Kirkman triple systems, Colourings of designs, Weak colouring

\textbf{MSC 2020:}  05B07, 05B30, 05C15

\section{Introduction}

A $(v,k,\lambda)$-BIBD ({\em balanced incomplete block design}) is a pair
$(V,{\mathcal B})$  such that
$V$
is a set of
$v$
elements (called {\em points}) and
${\mathcal B}$
is a collection of
$k$-element subsets of
$V$
(called {\em blocks}) such that each unordered pair
of points in
$V$
is contained in exactly
$\lambda$
blocks in
${\mathcal B}$.
When $k=3$, the blocks of ${\mathcal B}$ are often called {\em triples}.
A $(v,3,\lambda)$-BIBD is also known as a {\em Steiner triple system} or STS$(v)$ if $\lambda=1$.
It is well known that an STS$(v)$ exists if and only if $v\equiv 1$ or $3\pmod{6}$~\cite{Kirkman1847}; such integers $v$ are the {\em admissible} orders for Steiner triple systems.

Let $S=(V,{\mathcal B})$ be a $(v,k,\lambda)$-BIBD.
A {\em colouring} of $S$ is any mapping $\phi$ from
$V$ to a colour set $C$.
If $|C|=\delta$
we say that $\phi$ is a {\em $\delta$-colouring}.
Given any $\delta$-colouring $\phi:V \rightarrow C$ and point $x \in V$, if $\phi(x)=c$, then we say that $x$ has colour $c$.  The {\em colour class} of $c \in C$ is the set of points of colour $c$, i.e.\ $\phi^{-1}(c) = \{ x \in V \mid \phi(x)=c\}$.  If $\phi$ is surjective, then the colour classes partition the point set $V$.
The {\em colour type} associated with a colouring corresponds to a multiset of integers which give the sizes of the colour classes. We employ partition notation to denote the colour type; e.g. the type $\{3,3,3,4\}$ may be described as $3^34^1$.
If the elements of the colour type differ by at most one, we say that the colouring is {\em equitable}.

A block $B\in {\mathcal B}$ is said to be {\em monochromatic} if
each point of the block $B$ is mapped to the same colour.
We say that $\phi$ is a {\em weak} colouring of a design if there are no monochromatic blocks.
The {\em weak chromatic number} $\chi(S)$ is the smallest $\delta$ such that
the design $S$ has a weak $\delta$-colouring.  In this paper, all colourings considered are weak, so we will hereafter omit the word ``weak''; thus, in the remainder of the paper we refer to a weak $\delta$-colouring as simply a {\em $\delta$-colouring} and $\chi(S)$ as the {\em chromatic number} of $S$.  If $S$ has chromatic number $\delta$, then we say that $S$ is {\em $\delta$-chromatic}.

No Steiner triple system is $2$-chromatic, save for the degenerate case of an STS($3$)~\cite{Pelikan1970,Rosa1970}.
There exists a $3$-chromatic STS$(v)$ whenever $v\equiv 1$ or $3\pmod{6}$ and $v \geq 7$ 
(see Theorem~18.4 
of~\cite{CR}),
and there exists a $4$-chromatic STS$(v)$ if and only if $v\equiv 1$ or $3\pmod{6}$ and $v\geq 21$~\cite{CFGGKOPP,Haddad}.
It is shown in~\cite{dBPR} that for each integer $\delta\geq 3$ and sufficiently large $v\equiv 1$ or $3\pmod{6}$, there exists an STS$(v)$
with chromatic number $\delta$;
this result was generalized to equitable 
$\delta$-colourings in~\cite{CHL}.
Examples of Steiner triple systems which have chromatic number $3$ but do not have equitable colourings are given in~\cite{FGG}.
For triple systems with $\lambda \geq 2$, it was shown in~\cite{HP2014} that for each integer $\delta \geq 3$
there is a $\delta$-chromatic $(v,3,\lambda)$-BIBD for all sufficiently large admissible $v$;
more generally, for all $\delta \geq2$ and $k\geq 3$ other than $(\delta,k)$=(3,2),
it was proved that the necessary conditions for the existence of a $(v,k,\lambda)$-BIBD are
asymptotically sufficient for the existence of a $\delta$-chromatic $(v,k,\lambda)$-BIBD.

A {\em parallel class} of a
BIBD
$(V, {\mathcal B})$
is a subset of ${\mathcal B}$ which includes each element of
$V$ exactly once; that is, a parallel class is a partition of $V$ into blocks of ${\mathcal B}$.
A BIBD $(V, {\mathcal B})$
is said to be {\em resolvable}
if ${\mathcal B}$  partitions into parallel classes.
Vertex colourings of resolvable designs appear to be first mentioned in~\cite{LusiColbourn2023},
where it is observed that a family of 
$3$-colourable resolvable triple systems is produced by the Bose construction.

In this paper we focus directly on the problem of 
colouring resolvable Steiner triple systems.
A resolvable $(v,3,1)$-BIBD together with a partition into parallel classes
is known as a {\em Kirkman triple system},  or KTS$(v)$.
A KTS$(v)$ exists if and only if $v\equiv 3\pmod{6}$~\cite{Lu1965,RCW1971};
for each such order $v$ we prove that there exists a $3$-chromatic KTS$(v)$.
We also prove that there are infinitely many $\delta$-chromatic Kirkman triple systems for each integer $\delta \geq 3$.

We say that a colouring $\phi$ of a KTS$(v)$ is {\em rainbow} if it is a 
$3$-colouring in which the colour class sizes are each $v/3$ and there exists a parallel class, which we call a {\em rainbow parallel class}, in which every triple receives all three colours.  A straightforward counting argument yields that in a rainbow colouring there is exactly one rainbow parallel class, as any block not in the rainbow parallel class of a rainbow colouring of a KTS receives exactly two distinct colours.
We may also refer to a KTS$(v)$ as {\em rainbow} if it possesses a colouring which is rainbow.

Our aim in this paper is to construct 
colourings for Kirkman triple systems.
We first give results on small orders in Section~\ref{smallorders}.
In Section~\ref{mainconstructions}, we manipulate standard constructions of Kirkman triple systems by incorporating rainbow colourings.  These methods involve group divisible designs.
\begin{definition}
A $k$-GDD or {\em Group Divisible Design} is a
 triple $(V,{\mathcal G},{\mathcal B})$ where
${\mathcal G}$ is a partition of $V$ into
subsets of points, called {\em groups}, and
${\mathcal B}$ is a set of {\em blocks}, each of size $k$, with the following properties:
\begin{itemize}
	\item every pair of points in different groups belongs to exactly one block of ${\mathcal B}$;
	\item no block intersects a group in more than one point.
\end{itemize}
\end{definition}
\noindent
The {\em group type}, or just {\em type}, of a $k$-GDD is the multiset $T=\{|G|\mid G\in {\mathcal G}\}$.
If the integer $g$ occurs $u$ times in $T$ we use the notation $g^u$.
Similarly to BIBDs, a $k$-GDD is said to be {\em resolvable} if the block set $\cal B$ can be partitioned into parallel classes, in which case we refer to a $k$-RGDD.
In particular, we explore constructions using a class of GDDs known as {\em frames} and prove the following.

\begin{theorem}
\label{Thm-main1}
For each $v\equiv 3\pmod{6}$ there exists a rainbow {\rm KTS$(v)$} with chromatic number $3$.
\end{theorem}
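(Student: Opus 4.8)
The plan is to proceed by a direct construction that handles the admissible orders $v \equiv 3 \pmod 6$ in a small number of congruence classes modulo something like $18$ or $36$, using the standard recursive machinery for Kirkman triple systems but tracking a $3$-colouring throughout. The base case to establish first is that there exist rainbow $3$-chromatic KTS$(v)$ for a handful of small orders; these will be produced (or cited) in Section~\ref{smallorders}. The key structural observation to exploit is that the notion of ``rainbow'' is tailor-made for induction: a rainbow KTS$(v)$ carries a $3$-colouring with all colour classes of size $v/3$ together with one distinguished rainbow parallel class whose triples are transversals of the colour partition. Such a parallel class behaves exactly like the block set of a $3$-GDD of type $(v/3)^3$ with its natural transversal colouring, so it can be fed into GDD-based recursions without disturbing the colour structure.

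First I would recall the standard ``filling in groups'' construction for KTS via frames: start with a $3$-frame of type $h^n$ (equivalently a Kirkman frame), which partitions its blocks into partial parallel classes each missing exactly one group; then fill each group with a KTS (or KTS-with-a-hole) of the appropriate order, reconciling the partial parallel classes across groups to form genuine parallel classes. The colouring step is to $3$-colour the point set of the frame so that each group is split as evenly as possible into the three colours — so that within each group the induced colouring extends a rainbow $3$-colouring of the KTS that fills it — and so that every block of the frame, being a transversal of three distinct groups, automatically receives at least two colours (indeed we will arrange that ``most'' blocks get all three, and none is monochromatic). Because each frame block meets three different groups and each group is colour-balanced, no frame block is monochromatic regardless of how the three points' colours fall; the only real care is needed inside the filled groups, and there the rainbow KTS supplied by the base cases does the work. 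To get a \emph{rainbow} KTS$(v)$ out of this — not merely a $3$-chromatic one — I would designate one group's rainbow parallel class, extended by a carefully chosen colour-balanced completion of the remaining points into transversal triples, as the rainbow parallel class of the whole system; this requires the frame to contain, or be augmentable by, a parallel class of transversal triples, which is why working with resolvable GDDs / frames (rather than arbitrary GDDs) is essential.

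The main obstacle I anticipate is twofold. The lesser difficulty is bookkeeping the divisibility: ensuring $v/3$ is itself divisible by $3$ when a colour class must be further subdivided, which forces the congruence-class analysis and possibly a second construction for orders where a frame of the cleanest type does not exist (the sporadic small orders such as $v = 9, 15, 33, \ldots$ where Kirkman frames of type $h^n$ are scarce). The greater difficulty, and the crux of the argument, is proving the lower bound $\chi \geq 3$ \emph{simultaneously} with the construction: one must verify that the KTS produced genuinely fails to be $2$-colourable, i.e.\ contains no weak $2$-colouring. Since every STS$(v)$ with $v > 3$ is already known to satisfy $\chi \geq 3$ (by the cited results of Pelikán and Rosa), this lower bound is in fact automatic — so the real content is entirely the \emph{upper} bound $\chi \leq 3$ together with the rainbow property, and the hard part is checking that the recursion preserves rainbowness (the existence of the single transversal parallel class with balanced colour classes) at every step, including in the filled groups where KTS-with-holes must themselves be colour-controlled. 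I would therefore state and prove an intermediate lemma asserting that a rainbow KTS$(v)$ and a suitable rainbow-compatible frame combine to yield a rainbow KTS on the larger order, and then close the theorem by checking that the chosen small-order rainbow KTS together with known frame existence results cover every $v \equiv 3 \pmod 6$.
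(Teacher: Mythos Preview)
Your high-level architecture---frames with groups filled by small rainbow KTS, the global rainbow parallel class assembled from the ingredient ones, and the lower bound $\chi\geq 3$ automatic from Pelik\'an--Rosa---matches the paper, and you correctly isolate the substance as an intermediate lemma (the paper's Theorem~\ref{Thm-RainbowFrame}) asserting that the recursion preserves rainbowness.

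The gap is in your treatment of the frame blocks. You claim that because ``each frame block meets three different groups and each group is colour-balanced, no frame block is monochromatic regardless of how the three points' colours fall''. This is false: a block taking one point from each of three groups, each containing all three colours, can still select the same colour three times; colour-balancing the groups says nothing about individual blocks, and you give no mechanism for the parenthetical ``we will arrange that \ldots\ none is monochromatic''. The paper sidesteps this entirely by \emph{inflating the frame by $3$} before colouring: from a Kirkman frame on $V$ of type $T$ one passes to $V'=V\times\{0,1,2\}$, colours $(x,i)$ with colour $i$, and replaces each frame block $\{x,y,z\}$ by nine explicitly listed triples on $\{x,y,z\}\times\{0,1,2\}$ (arranged as three parallel classes), each receiving exactly two colours by design. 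This yields a Kirkman frame of type $\{3t:t\in T\}$ that is already weakly $3$-coloured; only then are the groups filled with rainbow KTS$(3t+w)$ (taking $w=3$), each containing a trivial rainbow KTS$(3)$ subsystem on the common $w$ points. The actual frames used are of type $g^um^1$ with $g\in\{2,4,6\}$ and $m\in\{g,10\}$ via Theorem~\ref{Thm-ivebeenframed}, so the ingredient rainbow KTS needed have orders $9,15,21,33$; the further orders in Lemma~\ref{Lemma-smallrainbows} handle the small $v$ for which no suitable frame exists.
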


We apply similar techniques to study $4$-colourings in Section~\ref{thefourcase}.
Our main result in that section is Theorem \ref{main4colour}, in which we show the existence of an infinite
class of 
$4$-colourable Kirkman triple systems
for which the orders grow according to a linear function.
In Section~\ref{arbitrary}, we study the case where there is an arbitrary number of colours, giving a proof of the following theorem.

\begin{theorem}
\label{Thm-main2}
For each $\delta\geq 3$, there are infinitely many integers $v$ such that there exists a {\rm KTS$(v)$} with chromatic number $\delta$.
\end{theorem}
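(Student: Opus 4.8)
The plan is to dispose of $\delta=3$ immediately and then build the cases $\delta\ge 4$ out of a small ``high-chromatic core'' together with rainbow ingredients. For $\delta=3$ there is nothing new to prove: Theorem~\ref{Thm-main1} already supplies a rainbow — and therefore $3$-chromatic — KTS$(v)$ for every $v\equiv 3\pmod 6$, which is an infinite family.

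For $\delta\ge 4$ I would first record one $\delta$-chromatic KTS and then multiply it up. The lower bound $\chi\ge\delta$ will always be obtained the cheap way: if a KTS contains a subsystem (even merely a sub-STS) $S'$, then the restriction of any weak colouring is a weak colouring of $S'$, so $\chi(\mathrm{KTS})\ge\chi(S')$; thus it suffices to guarantee that the systems we build contain a subsystem whose chromatic number is already $\delta$. For $\delta=4$ such a subsystem is provided (after checking that the $3$-chromatic case is excluded, e.g.\ via a bound on the independence number) by Theorem~\ref{main4colour} together with the small-order work of Section~\ref{smallorders}; for larger $\delta$ I would obtain a single $\delta$-chromatic KTS inductively, using a tripling-type recursion $\mathrm{KTS}(u)\to\mathrm{KTS}(3u)$ (or a $u\mapsto 2u+3$ step) that keeps the old system as a subsystem and whose new blocks are dense enough to drive the independence number strictly below a $(\delta-1)$th of the point set, forcing $\chi=\delta$ once the matching upper bound is in hand.

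The upper bound $\chi\le\delta$ is where the rainbow machinery does the work. Given a $\delta$-chromatic KTS$(m)$ with a fixed weak $\delta$-colouring $\phi$, I would reuse exactly the frame-based constructions of Sections~\ref{mainconstructions}--\ref{thefourcase}: organise the new points as the groups of a frame, fill the holes with rainbow KTS, and stitch the rainbow parallel classes together with the parallel classes of the ingredients to recover resolvability on all of KTS$(v)$. Since each rainbow ingredient uses only three colours, its $3$-colouring can be re-labelled by three of the $\delta$ colours occurring in $\phi$; done consistently, this extends $\phi$ to a weak $\delta$-colouring of KTS$(v)$ while the embedded copy of KTS$(m)$ is left untouched. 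Running the construction over the infinitely many admissible frame and ingredient orders then yields $\chi(\mathrm{KTS}(v))=\delta$ for infinitely many $v$.

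The main obstacle is the tension between these two halves. Forcing $\chi\ge\delta$ wants the design spread out, with small independent sets and blocks cutting across every large vertex set, whereas exhibiting an explicit weak $\delta$-colouring wants it structured enough that a partition into $\delta$ independent sets is visible; and the extra rigidity of resolvability — every block lying in a parallel class, ingredients that must be glued along compatible parallel classes — makes both demands harder to meet than in the non-resolvable STS setting of~\cite{dBPR,HP2014}. The delicate point will therefore be to design the frame recursion so that the high-chromatic core genuinely survives inside the final KTS while $\phi$ still spreads over all of the rainbow filler with no extra colour; the base cases for small $\delta$ and the finitely many small orders will, as usual, need separate ad hoc treatment of the kind carried out in Section~\ref{smallorders}.
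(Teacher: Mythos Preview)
Your treatment of $\delta=3$ is fine and matches the paper. For $\delta\ge 4$, however, the proposal is a wish-list rather than a proof, and it diverges substantially from what the paper actually does.

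The decisive gap is your ``single $\delta$-chromatic KTS'' step. You propose to manufacture one by a tripling-type recursion that simultaneously (i) keeps the old system as a subsystem, (ii) drives the independence number below $v/(\delta-1)$, and (iii) remains resolvable. No such recursion is exhibited, and the three requirements pull against each other: adding blocks to shrink independent sets makes the matching upper bound $\chi\le\delta$ harder, and doing all of this while preserving a parallel-class partition is precisely the difficulty that separates the KTS problem from the STS result of~\cite{dBPR}. Likewise, your upper-bound plan of filling frame holes with rainbow KTS and ``relabelling'' their three colours into three of the $\delta$ colours of $\phi$ is not made consistent at the overlaps (the infinity/hole points are shared by many ingredients), and nothing in Sections~\ref{mainconstructions}--\ref{thefourcase} provides a mechanism for embedding a prescribed $\delta$-chromatic KTS$(m)$ as a sub-KTS of the output.

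The paper sidesteps both problems by a different idea. It does \emph{not} start from a $\delta$-chromatic KTS at all: it takes a $\delta$-chromatic \emph{Steiner} triple system $S$ from~\cite{dBPR}, turns each triple into a quadruple by adjoining a fresh point, embeds the resulting partial $Q$-system into a full quadruple system $Q(w)$ (Corollary~\ref{QuadEmbed}), and then inflates each point of $Q(w)$ to a group of size $8$. The crucial technical ingredient is Lemma~\ref{Lemma-type83}: for any choice of ``starting colours'' $c_0,c_1,c_2,c_3\in\mathbb Z_\delta$ there is a $\delta$-coloured $3$-frame of type $8^4$ with no monochromatic block \emph{and} containing the specific triple $\{0_0,0_1,0_2\}$ whenever $c_0,c_1,c_2$ are not all equal. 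Placing these frames on the blocks of $Q(w)$ and filling the groups with KTS$(9)$'s yields a KTS$(8w+1)$ that is $\delta$-coloured by construction and literally contains every block of $S$ among its triples; hence $\chi=\delta$. The lower bound thus comes from a sub-STS, not a sub-KTS, and the upper bound comes from a bespoke $\delta$-coloured frame, not from relabelled rainbow pieces.
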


A simple way to construct Kirkman triple systems is to use quadruple systems.
We define $Q(v)$ to be a {\em quadruple system of order $v$}, namely a $(v,4,1)$-BIBD.
A quadruple system $Q(v)$ is known to exist if and only if $v \equiv 1$ or $4 \pmod{12}$~\cite{Hanani1961}.
Using quadruple system techniques, we discover interesting relationships between colourability of Kirkman triple systems and their underlying quadruple systems, which we describe in Section~\ref{quadruple}. We conclude the paper in Section~\ref{conclusion} by discussing some open problems.

\section{Results for small orders}
\label{smallorders}

It is well known~\cite{CR} that there
is a unique KTS$(9)$ up to isomorphism, shown by the following triples on the point set $\{1,2,\ldots,9\}$.
$$
\begin{array}{cccc}
\{1,2,3\} & \{1,4,7\} & \{1,5,9\} & \{1,6,8\} \\
\{4,5,6\} & \{2,5,8\} & \{2,6,7\} & \{2,4,9\} \\
\{7,8,9\} & \{3,6,9\} & \{3,4,8\} & \{3,5,7\} \\
\end{array}
$$

\begin{lemma}
\label{Lemma-kts9colourings}
There exist 
$3$-colourings of {\rm KTS}$(9)$ for each of the following colour types: $3^3$, $2^13^14^1$ and  $1^14^2$.
\end{lemma}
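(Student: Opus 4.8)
The plan is to handle each of the three colour types by exhibiting an explicit $3$-colouring of the (unique) KTS$(9)$ displayed above and checking directly that none of its twelve triples is monochromatic. The key simplification is that, since every block has exactly three points, a block is monochromatic if and only if all three of its points lie in one colour class; hence a colour class of size at most $2$ can be ignored, a colour class of size $3$ causes a problem only if it is itself a block, and a colour class of size $4$ causes a problem only if one of its four sub-triples is a block.

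For the type $3^3$ we need a partition of $\{1,\dots,9\}$ into three triples, none of which is among the twelve blocks. The only partitions into three triples that fail this are those all of whose parts are blocks, namely the four parallel classes; since there are many more than four partitions of the nine points into triples, a valid choice exists, and in fact one can choose the three colour classes so that every block of some parallel class meets all three of them---that is, so that the colouring is rainbow---which is the form in which this case will be used later. For the type $2^1 3^1 4^1$ it is enough to find a $4$-subset $F$ of the point set that contains no block, and then to split the remaining five points into a $3$-subset that is not a block and a $2$-subset; such an $F$ exists (for instance $\{1,2,4,5\}$), so this reduces to a short check. For the type $1^1 4^2$ we must partition eight of the nine points into two $4$-subsets, each containing no block, with the ninth point forming the singleton class; deleting a suitable point and grouping the four blocks through it appropriately gives, for example, the classes $\{9\}$, $\{1,2,4,5\}$, $\{3,6,7,8\}$.

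The only step with real content is the last one: no $5$ of the $9$ points of the underlying Steiner system can avoid all blocks, so a $1^1 4^2$ colouring forces each of the two non-singleton classes to be a largest possible block-free set, and one has to confirm that the eight points other than some fixed point do split into two such $4$-subsets. This, together with the routine verifications for the other two types, comes down to checking the proposed colour classes against the list of $12$ blocks, which I would carry out explicitly. I expect no genuine obstacle beyond this finite case analysis.
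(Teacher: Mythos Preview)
Your approach is the same as the paper's --- exhibit explicit colour classes and verify against the twelve blocks --- and your explicit choices for types $2^13^14^1$ and $1^14^2$ are correct (the paper uses different but equally valid classes).

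However, your argument for type $3^3$ contains a false claim. You write that ``the only partitions into three triples that fail this are those all of whose parts are blocks, namely the four parallel classes''. This is not true: a partition into three triples fails to be a weak colouring as soon as \emph{any one} of its parts is a block, not only when all three are. For instance, $\{1,2,3\}$, $\{4,5,7\}$, $\{6,8,9\}$ has exactly one part that is a block, and it is not a valid colouring. In fact there are $112$ bad partitions out of $280$, not four. Your counting argument therefore does not establish existence as stated. The fix is trivial --- just write down one explicit partition and check it, as the paper does with $\{1,4,9\}$, $\{2,5,7\}$, $\{3,6,8\}$ --- but the reasoning you gave should be corrected or replaced.
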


\begin{proof}
Using the representation of the KTS(9) above, a %weak 
$3$-colouring with colour type $3^3$ is given by the colour classes $\{1,4,9\}$, $\{2,5,7\}$ and $\{3,6,8\}$;
this is a rainbow colouring for which the parallel class containing the block $\{1,2,3\}$ is rainbow.
For colour type $2^13^14^1$, consider the classes $\{1,2\}$, $\{3,5,8\}$ and $\{4,6,7,9\}$.
For colour type $1^14^2$, consider the classes $\{1\}$, $\{2,3,5,9\}$ and $\{4,6,7,8\}$.
\end{proof}

Now, for several small orders we explicitly provide rainbow colourings
that will be useful in subsequent constructions.
	
\begin{lemma}
	There exists a rainbow {\rm KTS}$(v)$ for each $v\in \{3,9,15,21,33,39,57,69\}$.
	\label{Lemma-smallrainbows}
\end{lemma}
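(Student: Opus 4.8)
The plan is to exhibit each required KTS explicitly together with a weak $3$-colouring whose colour class sizes are all equal to $v/3$ and which admits a rainbow parallel class. For $v=3$ the system is trivial: the single triple $\{1,2,3\}$ coloured $1,2,3$ gives a rainbow KTS$(3)$ vacuously (the single parallel class is rainbow). For $v=9$, Lemma~\ref{Lemma-kts9colourings} already supplies the colouring of type $3^3$ with a rainbow parallel class, so nothing new is needed. The remaining orders $v\in\{15,21,33,39,57,69\}$ must be handled by displaying a concrete KTS$(v)$ on point set $\{0,1,\dots,v-1\}$ (or $\Z_v$, or $\Z_{v-1}\cup\{\infty\}$), listing its parallel classes, then specifying three colour classes of size $v/3$ and checking (i) no triple is monochromatic and (ii) exactly one parallel class is rainbow.

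The natural way to build these small systems is to use well-known cyclic or rotational constructions. For $v=15$ one can take the standard rotational KTS$(15)$ over $\Z_7\cup\{\infty_1,\ldots\}$ or one of the classical KTS$(15)$s (there are seven up to isomorphism); for $v=21,33,39,57,69$ one can use a $\Z_v$-rotational KTS (these exist since $v\equiv 3\pmod 6$ in each case, and indeed $v\equiv 3\pmod 6$ with $v$ of the appropriate residues admit cyclic Kirkman systems), developing a set of base blocks and base parallel classes under the action of $\Z_v$ (or $\Z_{v/3}$ acting on a $3$-fold structure). The colouring is then chosen to be compatible with this symmetry where possible: for a $\Z_v$-rotational system, colouring point $i$ by $i \bmod 3$ makes every colour class have size $v/3$, makes a block $\{a,a+d,a+2d\}$ with $d\not\equiv 0$ automatically rainbow and a block with $d\equiv 0\pmod 3$ automatically of type (two colours or) monochromatic — so the base blocks must be selected so that no full short orbit is monochromatic, and exactly one parallel class consists entirely of difference-$0\bmod 3$ blocks arranged to be rainbow. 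In practice it is cleaner to just write down the colour classes ad hoc for each order and verify by inspection.

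The main obstacle is purely bookkeeping: for each of the six nontrivial orders one must produce an explicit KTS and then verify, parallel class by parallel class, that the proposed tripartition of the points yields no monochromatic block and that precisely one parallel class is rainbow (the counting remark in the introduction guarantees that if one parallel class is rainbow then all others are automatically $2$-coloured, so only the ``at least one rainbow class'' and ``no monochromatic block'' conditions need checking). There is no conceptual difficulty and no asymptotic estimate; the risk is arithmetic slips in the larger cases $v=57$ and $v=69$, so I would organize the verification by grouping blocks into orbits under whatever automorphism the construction carries, check one representative per orbit, and record the unique rainbow parallel class separately. Once these base cases are in hand they feed directly into the frame-based recursion of Section~\ref{mainconstructions} toward Theorem~\ref{Thm-main1}.
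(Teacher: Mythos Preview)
Your plan is correct in outline and matches the paper's approach: this lemma is proved by exhibiting, for each listed order, an explicit KTS together with an equitable $3$-colouring possessing a rainbow parallel class, with $v=3$ trivial and $v=9$ coming from Lemma~\ref{Lemma-kts9colourings}.

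The one structural difference worth noting is that for $v\in\{21,33,39,57,69\}$ the paper does not use a $\Z_v$-rotational system coloured by $i\bmod 3$. Instead it builds each KTS on the point set $\{i_j : i\in\Z_{(v-3)/3},\ j\in\Z_3\}\cup\{\infty_0,\infty_1,\infty_2\}$ under the permutation $\sigma$ that cycles the $i$-coordinate and fixes the $\infty$'s, and colours $i_j$ with colour $j$. This choice has the advantage that the colouring is \emph{automatically} weak and equitable (every block meets at least two of the three ``sheets'' $j=0,1,2$ by the difference structure), and the parallel class fixed by $\sigma$ is automatically rainbow. Your $\Z_v$ with $i\bmod 3$ scheme would require, by contrast, selecting base blocks so that no orbit is monochromatic and one full parallel class happens to be rainbow---doable, but more to check. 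The paper then simply lists the starter blocks for the fixed, short-orbit, and long-orbit parallel classes in each case; your proposal correctly anticipates that the labour is pure bookkeeping, but as written it remains a plan: the actual base blocks (which are the content of the proof) still have to be produced.
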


\begin{proof}
The result is clearly true for $v=3$, and for $v=9$ a rainbow colouring appears in the proof of Lemma~\ref{Lemma-kts9colourings}.
For $v=15$, the following KTS$(15)$ (taken from Example 5.1.1 in~\cite{LindnerRodger}) has a rainbow colouring under the point partition $\{1,4,7,13,14\}$,
		$\{2,5,6,8,9\}$ and
		$\{3,10,11,12,15\}$.
\[
\begin{array}{ccccccc}
\{1,2,3\} & \{1,4,5\} & \{1,6,7\} & \{1,8,9\} & \{1,10,11\} & \{1,12,13\} & \{1,14,15\} \\
\{4,8,12\} & \{2,8,10\} & \{2,9,11\} & \{2,12,15\} & \{2,13,14\} & \{2,4,6\} & \{2,5,7\} \\
\{5,10,14\} & \{3,13,15\} & \{3,12,14\} & \{3,5,6\} & \{3,4,7\} & \{3,9,10\} & \{3,8,11\} \\
\{6,11,13\} & \{6,9,14\} & \{4,10,15\} & \{4,11,14\} & \{5,9,12\} & \{5,11,15\} & \{4,9,13\} \\
\{7,9,15\} & \{7,11,12\} & \{5,8,13\} & \{7,10,13\} & \{6,8,15\} & \{7,8,14\} & \{6,10,12\} \\
\end{array}
\]

For $v \in \{21,33,39,57,69\}$ we constructed several KTS$(v)$ in a manner similar to that described in~\cite{ColbournLing2002},
and in so doing we were able to identify several systems with rainbow
\linebreak
colourings.
The general approach here is to construct a KTS$(v)$ on the point set
\linebreak
$\{i_j\mid i\in
{\mathbb Z}_{(v-3)/3},
j\in {\mathbb Z}_3
\}\cup \{\infty_0,\infty_1,\infty_2\}$
such that the permutation
\begin{center}
$\sigma=(0_0, 1_0, \ldots, (\frac{v-6}{3})_0)(0_1, 1_1, \ldots, (\frac{v-6}{3})_1)(0_2, 1_2, \ldots, (\frac{v-6}{3})_2)(\infty_0)(\infty_1)(\infty_2)$
\end{center}
fixes one parallel class,
induces a short orbit of $\frac{v-3}{6}$ parallel classes
and induces a long orbit of $\frac{v-3}{3}$ parallel classes.
By assigning colour $j$ to each point $i_j$ where $i \in {\mathbb Z}_{(v-3)/3}\cup \{\infty\}$,
we will obtain a 
$3$-colouring for which the parallel class fixed by $\sigma$ is rainbow.

For $v=21$, the fixed parallel class contains the block $\{\infty_0,\infty_1,\infty_2\}$ as well as the development of the starter block
$\{0_0, 4_1, 1_2\}$.
The short orbit of parallel classes is generated from a parallel class having the blocks
\begin{center}
		$\{1_0,1_1,2_1\}$,
 	 $\{2_0,1_2,2_2\}$,
		$\sigma^{3}(\{1_0,1_1,2_1\})$,
	$\sigma^{3}(\{2_0,1_2,2_2\})$, \\
	$\{\infty_0,0_2,3_2\}$,
	$\{\infty_1,0_0,3_0\}$,
	$\{\infty_2,0_1,3_1\}$.
	\end{center}
The long orbit of parallel classes is generated from a parallel class having blocks
\begin{center}
$\{0_0,1_0,3_1\}$,
$\{2_0,4_0,0_2\}$,
$\{1_1,5_1,3_2\}$,
$\{0_1,1_2,5_2\}$, \\
$\{\infty_1,4_1,4_2\}$,
$\{\infty_2,5_0,2_2\}$,
$\{\infty_0,3_0,2_1\}$.
\end{center}

For $v=33$, the fixed parallel class contains the block $\{\infty_0,\infty_1,\infty_2\}$ as well as the development of the starter block
$\{0_0, 5_1, 2_2\}$.
The short orbit of parallel classes is generated from a parallel class having the blocks
\begin{center}
$\{1_0, 2_0, 1_2\}$,  $\{3_0, 1_1, 2_1\}$,  $\{4_0, 2_2, 8_2\}$,  $\{3_1, 9_1, 4_2\}$,
\\
$\sigma^{5}(\{1_0, 2_0, 1_2\})$,  $\sigma^{5}(\{3_0, 1_1, 2_1\})$,  $\sigma^{5}(\{4_0, 2_2, 8_2\})$,  $\sigma^{5}(\{3_1, 9_1, 4_2\})$,
\\
$\{\infty_1, 0_0, 5_0\}$,
$\{\infty_2, 0_1, 5_1\}$,
$\{\infty_0, 0_2, 5_2\}$.
\end{center}
The long orbit of parallel classes is generated from a parallel class having blocks
\begin{center}
$\{0_0, 2_0, 2_1\}$,  $\{1_0, 4_0, 7_1\}$,  $\{3_0, 7_0, 4_1\}$,  $\{5_0, 6_2, 8_2\}$,  $\{8_0, 4_2, 5_2\}$,  $\{0_1, 8_1, 2_2\}$,
\\
$\{6_1, 9_1, 9_2\}$,
$\{1_1, 0_2, 7_2\}$,
$\{\infty_1, 5_1, 3_2\}$,  $\{\infty_2, 6_0, 1_2\}$,  $\{\infty_0, 9_0, 3_1\}$.
\end{center}

For $v=39$, the fixed parallel class contains the block $\{\infty_0,\infty_1,\infty_2\}$ plus the development of the starter block
$\{0_0, 9_1, 3_2\}$.
The short orbit of parallel classes is generated from a parallel class having blocks
\begin{center}
$\{1_0, 2_0, 1_1\}$,  $\{3_0, 4_1, 5_1\}$,  $\{4_0, 1_2, 2_2\}$,  $\{5_0, 4_2, 9_2\}$,  $\{2_1, 9_1, 5_2\}$,
\\
$\sigma^{6}(\{1_0, 2_0, 1_1\})$,  $\sigma^{6}(\{3_0, 4_1, 5_1\})$,  $\sigma^{6}(\{4_0, 1_2, 2_2\})$,  $\sigma^{6}(\{5_0, 4_2, 9_2\})$,
\\
$\sigma^{6}(\{2_1, 9_1, 5_2\})$,
$\{\infty_1, 0_0, 6_0\}$,
$\{\infty_2, 0_1, 6_1\}$,
$\{\infty_0, 0_2, 6_2\}$.
\end{center}
The long orbit of parallel classes is generated from a parallel class having blocks
\begin{center}
$\{0_0, 2_0, 5_1\}$,  $\{1_0, 4_0, 8_1\}$,  $\{3_0, 7_0, 1_1\}$,  $\{5_0, 10_0, 0_2\}$,  $\{6_0, 7_2, 11_2\}$,  $\{9_0, 3_2, 5_2\}$,
$\{0_1, 9_1, 2_2\}$,  $\{2_1, 4_1, 1_2\}$,  $\{6_1, 10_1, 10_2\}$,  $\{11_1, 6_2, 9_2\}$,
$\{\infty_1, 3_1, 4_2\}$,  $\{\infty_2, 8_0, 8_2\}$,  $\{\infty_0, 11_0, 7_1\}$.
\end{center}

For $v=57$, the fixed parallel class contains the block $\{\infty_0,\infty_1,\infty_2\}$ plus the development of the starter block
$\{0_0, 9_1, 10_2\}$.
The short orbit of parallel classes is generated from a parallel class having blocks
\begin{center}
$\{1_0, 2_0, 1_1\}$,  $\{3_0, 5_0, 1_2\}$,  $\{4_0, 2_1, 5_1\}$,  $\{6_0, 3_1, 8_1\}$,  $\{7_0, 2_2, 4_2\}$,  $\{8_0, 7_2, 8_2\}$,  $\{6_1, 7_1, 3_2\}$,  $\{13_1, 5_2, 15_2\}$,
$\sigma^{9}(\{1_0, 2_0, 1_1\})$,  $\sigma^{9}(\{3_0, 5_0, 1_2\})$,  $\sigma^{9}(\{4_0, 2_1, 5_1\})$,  $\sigma^{9}(\{6_0, 3_1, 8_1\})$,  $\sigma^{9}(\{7_0, 2_2, 4_2\})$,  $\sigma^{9}(\{8_0, 7_2, 8_2\})$,  $\sigma^{9}(\{6_1, 7_1, 3_2\})$,  $\sigma^{9}(\{13_1, 5_2, 15_2\})$,
$\{\infty_1, 0_0, 9_0\}$,
$\{\infty_2, 0_1, 9_1\}$,
$\{\infty_0, 0_2, 9_2\}$.
\end{center}
The long orbit of parallel classes is generated from a parallel class having blocks
\begin{center}
$\{0_0, 3_0, 6_1\}$,
$\{1_0, 5_0, 9_1\}$,
$\{2_0, 8_0, 15_1\}$,
$\{4_0, 11_0, 13_2\}$,
$\{6_0, 16_0, 9_2\}$,
$\{12_0, 17_0, 6_2\}$,
$\{7_0, 0_1, 12_1\}$,
$\{9_0, 1_1, 3_1\}$,
$\{13_0, 1_2, 14_2\}$,
$\{14_0, 0_2, 4_2\}$,
$\{7_1, 14_1, 7_2\}$,
$\{8_1, 16_1, 2_2\}$,
$\{13_1, 17_1, 16_2\}$,
$\{2_1, 8_2, 11_2\}$,
$\{5_1, 3_2, 10_2\}$,
$\{10_1, 5_2, 17_2\}$,
$\{\infty_1, 4_1, 12_2\}$,
$\{\infty_2, 10_0, 15_2\}$,
$\{\infty_0, 15_0, 11_1\}$.
\end{center}

For $v=69$, the fixed parallel class contains the block $\{\infty_0,\infty_1,\infty_2\}$ plus the development of the starter block
$\{0_0, 14_1, 9_2\}$.
The short orbit of parallel classes is generated from a parallel class having blocks
\begin{center}
$\{1_0, 2_0, 1_1\}$,
$\{3_0, 5_0, 6_1\}$,
$\{4_0, 7_0, 1_2\}$,
$\{6_0, 2_1, 3_1\}$,
$\{8_0, 3_2, 4_2\}$,
$\{9_0, 2_2, 7_2\}$,
$\{10_0, 9_2, 16_2\}$,
$\{9_1, 15_1, 6_2\}$,
$\{10_1, 18_1, 8_2\}$,
$\{16_1, 19_1, 21_2\}$,
$\sigma^{11}(\{1_0, 2_0, 1_1\})$,
$\sigma^{11}(\{3_0, 5_0, 6_1\})$,
$\sigma^{11}(\{4_0, 7_0, 1_2\})$,
$\sigma^{11}(\{6_0, 2_1, 3_1\})$,
$\sigma^{11}(\{8_0, 3_2, 4_2\})$,
$\sigma^{11}(\{9_0, 2_2, 7_2\})$,
$\sigma^{11}(\{10_0, 9_2, 16_2\})$,
$\sigma^{11}(\{9_1, 15_1, 6_2\})$,
$\sigma^{11}(\{10_1, 18_1, 8_2\})$,
$\sigma^{11}(\{16_1, 19_1, 21_2\})$,
$\{\infty_1, 0_0, 11_0\}$,
$\{\infty_2, 0_1, 11_1\}$,
$\{\infty_0, 0_2, 11_2\}$.
\end{center}
The long orbit of parallel classes is generated from a parallel class having blocks
\begin{center}
$\{0_0, 4_0, 2_1\}$,
$\{1_0, 6_0, 10_1\}$,
$\{3_0, 9_0, 14_1\}$,
$\{5_0, 20_0, 15_1\}$,
$\{7_0, 17_0, 8_2\}$,
$\{11_0, 19_0, 19_2\}$,
$\{12_0, 21_0, 1_2\}$,
$\{14_0, 4_1, 8_1\}$,
$\{15_0, 1_1, 6_1\}$,
$\{16_0, 0_1, 9_1\}$,
$\{8_0, 13_2, 15_2\}$,
$\{10_0, 2_2, 14_2\}$,
$\{13_0, 3_2, 16_2\}$,
$\{11_1, 21_1, 7_2\}$,
$\{13_1, 20_1, 20_2\}$,
$\{17_1, 19_1, 11_2\}$,
$\{5_1, 6_2, 9_2\}$,
$\{7_1, 10_2, 18_2\}$,
$\{12_1, 5_2, 21_2\}$,
$\{16_1, 0_2, 4_2\}$,
$\{\infty_1, 18_1, 17_2\}$,
$\{\infty_2, 2_0, 12_2\}$,
$\{\infty_0, 18_0, 3_1\}$.
\end{center}
\end{proof}

We will discuss rainbow colourings in more detail in Section~\ref{mainconstructions},
but we pause here to briefly note that
the several preceding instances of $3$-chromatic Kirkman triple systems can be used to generate
further examples of $3$-chromatic Kirkman triple systems.

\begin{theorem}
\label{Thm-tripling}
	Let $v\geq 3$.
	Suppose that there exists a {\rm KTS}$(3v)$ which has a $3$-colouring that is %both weak and 
	equitable.
	Then there exists a {\rm KTS}$(9v)$
	which has a $3$-colouring that is %both weak and 
	equitable.
\end{theorem}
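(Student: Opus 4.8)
The plan is to use a standard tripling-type construction, taking three disjoint copies of the given KTS$(3v)$ and combining them with a suitable structure that links points in different copies, then transferring the colouring. Concretely, let $(W,\mathcal{C})$ be a KTS$(3v)$ equipped with an equitable $3$-colouring $\phi$; since $3v \equiv 3 \pmod 6$ the colour type is $v^3$, so the three colour classes $W_0, W_1, W_2$ each have size $v$. Take three copies $W\times\{1\}, W\times\{2\}, W\times\{3\}$ of $W$ and set $V = W\times\{1,2,3\}$, so $|V| = 9v$. On each copy place a copy of the KTS$(3v)$, giving parallel classes within each copy; the blocks that still need to be covered are the pairs $\{(x,i),(y,j)\}$ with $i\neq j$. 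These are exactly the cross-pairs of a $3$-GDD of type $(3v)^3$ with groups $W\times\{i\}$; since a resolvable transversal design $\mathrm{TD}(3,3v)$ (equivalently a $3$-RGDD of type $(3v)^3$, which exists as $3v$ points with a pair of orthogonal structures, or simply because an affine plane of order $3$ exists) provides such blocks partitioned into $3v$ parallel classes, we can complete $V$ to a KTS$(9v)$. The total parallel class count is $3\cdot\frac{3v-1}{2} + 3v = \frac{9v-1}{2}$, as required for a KTS$(9v)$.

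The second step is to colour $V$. Colour each point $(x,i)$ by $\phi(x)$; that is, reuse the same $3$-colouring on each copy. Within each copy, no block is monochromatic because $\phi$ was a weak colouring of the original KTS$(3v)$. Each colour class of the new colouring is $W_c \times\{1,2,3\}$ and has size $3v$, so the colouring is equitable with colour type $(3v)^3$. It remains to ensure no cross-block is monochromatic; this is where care is needed, and it is the main obstacle. A cross-block of the $3$-GDD has the form $\{(x,1),(y,2),(z,3)\}$ (for a transversal design, hitting each group once), and it is monochromatic iff $\phi(x)=\phi(y)=\phi(z)$. So I must choose the transversal design on $W$ (with the partition into the three copies playing the role of the three groups) so that no block of the TD lies entirely inside one colour class $W_c$. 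Equivalently, viewing the TD$(3,3v)$ via a Latin square $L$ of order $3v$ on symbol set $W$ with rows and columns also indexed by $W$, I need that for each colour $c$ the submatrix of $L$ on rows in $W_c$ and columns in $W_c$ contains no entry in $W_c$ — i.e. the three colour classes form a "rainbow-transversal-avoiding" triple in $L$.

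The cleanest way to arrange this is to exploit freedom in the choice of the resolvable $3$-GDD of type $(3v)^3$: such a GDD corresponds to a pair of orthogonal equitable partitions, and one can permute the points within each group independently without destroying resolvability. I would argue that, because each colour class has size exactly $v$ within a group of size $3v$, a counting/averaging or a direct construction (e.g. taking $W = \Z_{3v}$ with $W_c = \{c, c+3, c+6,\dots\}$ and using the Latin square $L(i,j) = -i-j$ or $i+j$ with a twist, checking the residues mod $3$) guarantees that the monochromatic triples $\phi(x)=\phi(y)=\phi(z)=c$ are never all consistent with a single block. In fact, with $W=\Z_{3v}$, colour classes given by residue mod $3$, and cross-blocks being the triples $\{(i,1),(j,2),(k,3)\}$ with $i+j+k\equiv a \pmod{3v}$ for the $a$-th parallel class, a block is monochromatic in colour $c$ only if $i\equiv j\equiv k\equiv c\pmod 3$, forcing $a = i+j+k \equiv 3c \equiv 0 \pmod 3$; so for the $3v-1$ choices of $a$ not divisible by $3$ (there are $2v$ such, but we only need one per colour to fail) no monochromatic cross-block arises, and for the remaining parallel classes one applies a fixed permutation of one group that shifts residues, killing the bad triples while preserving the parallel class structure. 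Assembling all parallel classes — the within-copy ones and the (possibly permuted) TD parallel classes — yields a KTS$(9v)$ whose colouring is weak and equitable, completing the proof. The main obstacle, as noted, is verifying this residue bookkeeping so that simultaneously (i) the TD parallel classes remain a resolution and (ii) every cross-block meets at least two colour classes; everything else is routine.
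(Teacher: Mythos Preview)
Your overall architecture---three copies of the given KTS$(3v)$ together with a resolvable cross-structure, colouring each copy identically---is the same as the paper's. The gap is in the cross-structure. You need a $3$-RGDD of type $(3v)^3$ in which no block is monochromatic under a colouring that assigns each group the same partition into three classes of size $v$. Your $\Z_{3v}$/residue construction does produce monochromatic blocks in the $v$ parallel classes with $a\equiv 0\pmod 3$, and your proposed repair (``apply a fixed permutation of one group'' to just those classes) breaks the design: if you permute group $3$ in some parallel classes but not others, a pair $\{(i,1),(k,3)\}$ that was covered once will now be covered either zero or two times, so the result is no longer a GDD. Permuting the whole group uniformly is just a relabelling and does not eliminate the bad blocks. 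So as written the cross-structure is not justified. (There is also an arithmetic slip: the within-copy classes combine to give $\tfrac{3v-1}{2}$ full parallel classes, not $3\cdot\tfrac{3v-1}{2}$.)

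The paper sidesteps this difficulty with a clean trick you are missing. Rather than build one $3$-RGDD of type $(3v)^3$ and then fight to avoid monochromatic blocks, it splits each copy into its three colour classes $A_1,A_2,A_3$, $B_1,B_2,B_3$, $C_1,C_2,C_3$ (each of size $v$) and uses \emph{nine} $3$-RGDDs of type $v^3$, placed on triples $\{A_i,B_j,C_k\}$ chosen so that $(i,j,k)$ is never $(1,1,1)$, $(2,2,2)$ or $(3,3,3)$; an explicit choice is the three rows
\[
\{A_1,B_1,C_2\},\{A_2,B_2,C_3\},\{A_3,B_3,C_1\};\quad
\{A_1,B_2,C_1\},\{A_2,B_3,C_2\},\{A_3,B_1,C_3\};\quad
\{A_1,B_3,C_3\},\{A_2,B_1,C_1\},\{A_3,B_2,C_2\}.
\]
Every block of each small RGDD meets at least two colours automatically, because its three groups do not all carry the same colour; and each row above yields $v$ full parallel classes by combining one parallel class from each of its three RGDDs. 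The required $3$-RGDD of type $v^3$ exists since $v$ is odd. This is exactly the ``colour-respecting'' decomposition that your argument needs but does not supply.
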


\begin{proof}

	Let $A_1=\{a_1,a_2,\dots ,a_{v}\}$,
	$A_2= \{a_{v+1},a_{v+2},\dots ,a_{2v}\}$ and
	$A_3= \{a_{2v+1},a_{2v+2},\dots ,a_{3v}\}$,
	with $B_1$, $B_2$, $B_3$, $C_1$, $C_2$ and $C_3$ similarly defined.
	Next, let $A=A_1\cup A_2\cup A_3$ with $B$ and $C$ similarly defined.

		We construct a KTS$(9v)= (A\cup B\cup C, {\mathcal B})$ as follows.
		Add to ${\mathcal B}$ the blocks of a 
		$3$-coloured KTS$(3v)$ on each of $A$, $B$ and $C$ in such a way that the vertices in the set $A_i\cup B_i\cup C_i$ have colour $i$,
	for each $1\leq i\leq 3$.
	
	Necessarily $v$ is odd; otherwise a KTS$(3v)$ does not exist.
	Thus there exists a Latin square $L_v$ of order $v$ which partitions into transversals $T_1,T_2,\dots ,T_v$.
	Equivalently, there is a 3-RGDD of type $v^3$ (a resolvable transversal design), $G$.
		Add to ${\mathcal B}$ 
	 nine copies of $G$, using the following sets of three groups:

\[\begin{array}{ccc}
	\{A_1,B_1,C_2\}, & \{A_2,B_2,C_3\}, & \{A_3,B_3,C_1\}, \\
	\{A_1,B_2,C_1\}, &  \{A_2,B_3,C_2\}, & \{A_3,B_1,C_3\}, \\
	\{A_1,B_3,C_3\}, & \{A_2,B_1,C_1\}, & \{A_3,B_2,C_2\}.\\
\end{array}\]

To see that the added blocks can be partitioned into parallel classes, let
\[\begin{array}{l}
\mathcal{X}_1 = \big( \{A_1,B_1,C_2\}, \{A_2,B_2,C_3\}, \{A_3,B_3,C_1\} \big), \\
\mathcal{X}_2 = \big( \{A_1,B_2,C_1\}, \{A_2,B_3,C_2\}, \{A_3,B_1,C_3\} \big), \\
\mathcal{X}_3 = \big( \{A_1,B_3,C_3\}, \{A_2,B_1,C_1\}, \{A_3,B_2,C_2\} \big),
\end{array}\] 
and let $G_{i,j,k}$ be the $k^{\mathrm{th}}$ parallel class in the copy of $G$ placed on the $j^{\mathrm{th}}$ set of groups in $\mathcal{X}_i$.  For each $i \in \{1,2,3\}$ and $k \in \{1, \ldots, v\}$, $G_{i,1,k} \cup G_{i,2,k} \cup G_{i,3,k}$ forms a parallel class. 
\end{proof}

In addition to the Kirkman triple systems produced by Theorem~\ref{Thm-tripling}, we call attention to
the family of
Kirkman triple systems that arise from the Bose construction.
Resolvable Bose triple systems are studied in~\cite{LusiColbourn2023},
where it is shown that for such a triple system to exist it must have order $v \equiv 9\pmod{18}$.
Moreover, for each $v \equiv 9\pmod{18}$ such a system is constructed.
The resolvable Bose triple systems that are constructed in~\cite{LusiColbourn2023} are observed to all have equitable 
$3$-colourings.

We finish this section with some comments on small Kirkman triple systems that are not $3$-chromatic.
In~\cite{TV}, Tonchev and Vanstone presented thirty non-isomorphic KTS$(33)$.  
We analysed each of them computationally by implementing an exhaustive backtracking algorithm to search for potential colourings.
In so doing we found that none of them can be $3$-coloured, but they each admit a 
$4$-colouring that is equitable and of type $8^39^1$;
we include an appendix with details of such colourings.
Additionally, we tested the 1-rotational KTS$(33)$ over the cyclic group given by Buratti and Zuanni~\cite{BurattiZuanni}, and found that 13 are $3$-chromatic, while the rest are all $4$-chromatic; we give an example of one of the $3$-chromatic systems from~\cite{BurattiZuanni} in the appendix.
We also tested the 3-pyramidal KTS(33) from \cite{Bonvicini} and found that it was 4-chromatic.
It seems that, at least in the case $v=33$, a high degree of symmetry correlates with a higher chromatic number. 

Lastly we observe that sporadic examples of Kirkman triple systems with chromatic number greater than $4$ appear in the literature.  Fug\`{e}re, Haddad and Wehlau~\cite{FHW1994} prove that the points and lines of the projective geometry PG$(5,2)$ form a $5$-chromatic Steiner triple system; it is known that PG$(n,2)$ is resolvable whenever $n$ is odd~\cite{Baker1976}, so that this design yields a $5$-chromatic KTS$(63)$.
In~\cite{Haddad}, Haddad shows that the Steiner triple system derived from the affine geometry AG$(4,3)$ is $5$-chromatic; it follows that there exists a KTS$(81)$ which is $5$-chromatic. Bruen, Haddad and Wehlau~\cite{BHW1998} show that the KTS$(243)$ given by the affine geometry AG$(5,3)$ is $6$-chromatic.

\section{Constructions via frames}

\label{mainconstructions}

The main aim of this section is to prove Theorem  \ref{Thm-main1}.
We begin with some definitions.
Recall that a $k$-GDD consists of a partition $\mathcal{G}$ of the point set $V$ into groups, and a collection $\mathcal{B}$ of blocks of size $k$ which
contain each pair of points in different groups exactly once, but no pair of points contained in a single group.
A $k$-GDD is said, in turn, to be a {\em $k$-frame} if
there is a partition ${\mathcal P}$ of ${\mathcal B}$ into {\em partial parallel classes}
such that each element of ${\mathcal P}$ is a partition of $V\setminus G$ for some group
$G\in {\mathcal G}$.
\noindent
A frame thus can be thought of as a  partially resolvable group divisible design, where each partial parallel class misses exactly one group.
A $3$-frame is also known as a {\em Kirkman frame}.
\begin{remark} \label{FPC}
By a standard straightforward counting argument, in a Kirkman frame, for each $G\in {\mathcal G}$ there are $|G|/2$ partial parallel classes in ${\mathcal P}$ which exclude $G$; hence, every group has even size.
\end{remark}

\begin{theorem}[\cite{Stinson87}]
	There is a Kirkman frame of type $g^u$ if and only if $u\geq 4$, $g$ is even and $g(u-1)$ is divisible by $3$.
\label{framebyanyothername}
\end{theorem}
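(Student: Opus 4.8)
Proof proposal for Theorem~\ref{framebyanyothername} (Stinson's characterization of Kirkman frames of type $g^u$).

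The plan is to prove necessity by counting (this is essentially Remark~\ref{FPC} together with standard divisibility conditions for GDDs), and sufficiency by a construction that combines a small number of base frames with a recursive ``blow-up'' via resolvable transversal designs.

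For necessity: Suppose a Kirkman frame of type $g^u$ exists. Fix a group $G$; the partial parallel classes that miss $G$ partition the $g(u-1)$ points of $V\setminus G$ into triples, so each such class has $g(u-1)/3$ blocks, forcing $3 \mid g(u-1)$. Counting the blocks through a fixed point $x$: $x$ lies in a block with each of the $g(u-1)$ points outside its group, two at a time, so $x$ is in $g(u-1)/2$ blocks, forcing $g$ even. Finally, counting incidences of $x$ among the partial parallel classes missing a group other than the one containing $x$: each such class contributes exactly one block through $x$, and there are $\sum_{G' \neq G_x} |G'|/2 = g(u-1)/2$ of them; consistency gives nothing new, but examining the classes missing $G_x$ itself shows there are $g/2$ of them (Remark~\ref{FPC}), and since $u$ distinct groups each contribute this count we need $u \geq 4$ for the frame to contain blocks at all in the nontrivial sense — more carefully, a $3$-GDD of type $g^u$ has no blocks when $u \leq 2$ and cannot be a frame when $u = 3$ by a short parity/counting check. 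So the three stated conditions are necessary.

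For sufficiency, I would proceed in two stages. First, establish a set of small base cases: Kirkman frames of type $g^u$ for $u \in \{4,5,6,7\}$ (or whatever finite set turns out to suffice) and the smallest admissible values of $g$, either by direct construction over a group (e.g.\ a $1$-rotational or cyclic construction) or by citing/reproducing known small frames. Second, use the standard recursive tool: if there is a Kirkman frame of type $g^u$ and a resolvable transversal design (equivalently a $3$-RGDD of type $n^3$, i.e.\ a pair of orthogonal-like structures) of the appropriate order, then ``inflating'' each point by a factor of $n$ and filling in yields a Kirkman frame of type $(gn)^u$; combined with adjoining extra groups via a truncated-GDD / Wilson-type construction one climbs from the base cases to all admissible $(g,u)$. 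The even-$g$ constraint and the $3 \mid g(u-1)$ constraint interact nicely here: $u$ fixed mod $3$ determines which residues of $g$ are allowed, and one handles the residue classes $u \equiv 0, 1, 2 \pmod 3$ separately, in each case reducing to a bounded number of base frames that get inflated.

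The main obstacle is the sufficiency direction, and specifically assembling a complete and minimal set of base frames together with checking that the recursive inflation (which needs resolvable TDs of order $n$, hence needs $n \neq 2, 6$ and other small exceptions avoided) actually covers \emph{every} admissible pair $(g,u)$ with no gaps — in particular the genuinely small cases like $g^4$ for small even $g$, and cases where the only available $n$'s run into the nonexistence of certain small RGDDs. Since the theorem is quoted from \cite{Stinson87}, in the paper itself I would simply cite it; the sketch above indicates how one would reconstruct the proof if needed, with the understanding that the delicate bookkeeping of small exceptions is where the real work lies.
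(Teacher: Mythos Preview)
The paper does not prove this theorem at all; it is stated with attribution to \cite{Stinson87} and used as a black box. You correctly recognise this in your final paragraph, so in that sense your proposal matches the paper exactly: cite Stinson and move on.

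As for the sketch itself, the necessity arguments for $g$ even and $3\mid g(u-1)$ are fine, but your justification of $u\geq 4$ is muddled. The clean reason is that a partial parallel class missing a group $G$ must partition $V\setminus G$ into triples, and each triple meets each remaining group in at most one point; with $u=3$ only two groups remain, so no triple can be formed at all. Your sufficiency outline (small base frames plus inflation by resolvable transversal designs, handled by residue class of $u$ mod $3$) is indeed the shape of Stinson's argument, and you are right that the real work is in the small cases and in dodging the nonexistent small RGDDs; but since the paper treats the result as imported, none of that bookkeeping is needed here.
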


Kirkman frames have also been classified when all but one group has uniform size, and the group sizes are either small or divisible by $12$.

\begin{theorem}
	[\cite{GRS, WG}]
	Let $0<g\leq 12$ or $g\equiv 0\pmod{12}$ and $u,m>0$. Then there exists a Kirkman frame of type $g^u m^1$ if and only if the following all hold:
	\begin{enumerate}
		\item[a)] $g$ is even;
		\item[b)] $gu \equiv 0\pmod{3}$;
		\item[c)] $m\equiv g \pmod{6}$;
		\item[d)] $u\geq 3$;
		\item[e)] if $u = 3$ then $m = g$, and
		if $u > 3$ then $0 < m \leq g(u - 1)/2$.
	\end{enumerate}
\label{Thm-ivebeenframed}
\end{theorem}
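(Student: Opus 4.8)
The plan is to establish necessity by elementary counting and sufficiency by the standard recursive machinery for frames, reducing the latter to a finite collection of base cases for each admissible value of $g$. For the necessity direction I would argue as follows. Condition (a) is immediate from Remark~\ref{FPC}, which forces every group of a Kirkman frame to have even size, so both $g$ and $m$ are even. For (b), the partial parallel classes that exclude the group of size $m$ partition its complement, a set of $gu$ points, into triples, whence $3 \mid gu$. For (c), the partial parallel classes excluding a group of size $g$ partition a set of $g(u-1)+m$ points into triples, so $3 \mid g(u-1)+m$; subtracting the relation $3 \mid gu$ of (b) yields $3 \mid (m-g)$, and combining $m \equiv g \pmod 3$ with the fact that $g$ and $m$ are both even gives $m \equiv g \pmod 6$. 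Condition (d) follows because deleting any one of the $u+1$ groups must leave at least three groups from which to form triples, so $u \geq 3$.

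The two parts of (e) require a closer look at a single excluding class. When $u > 3$, fix a partial parallel class that excludes a group of size $g$: each of the $m$ points of the distinguished group lies in a distinct triple (no triple meets a group twice), and each such triple uses two further points drawn from the remaining $u-1$ groups of size $g$; hence $2m \leq g(u-1)$, giving $m \leq g(u-1)/2$. When $u=3$, a class excluding a group of size $g$ partitions the two remaining $g$-groups together with the $m$-group, i.e.\ exactly three groups, so every triple must be a transversal meeting all three; counting triples two ways gives $(2g+m)/3 = m$, which forces $m=g$. This disposes of necessity.

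For sufficiency I would first observe that the case $u=3$ is not really new: since $m=g$ is forced, type $g^3m^1$ is the uniform type $g^4$, and existence follows from Theorem~\ref{framebyanyothername} (here $u=4\geq 4$, $g$ is even, and $3 \mid g\cdot 3$ holds automatically). The substantive range is therefore $u \geq 4$ with $0 < m \leq g(u-1)/2$ and $m \equiv g \pmod 6$. Here I would use two recursive tools. The first is Stinson's fundamental frame construction: starting from a master GDD (typically a transversal design, equivalently a suitable set of MOLS), assign integer weights to points and replace each block by an ingredient Kirkman frame on the weighted point set, producing a frame whose group sizes are the weighted group sums. Choosing the master TD and the weights appropriately lets one inflate small frames into larger ones of type $g^u m^1$, with the asymmetric group of size $m$ obtained by weighting one coordinate differently. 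The second tool is a filling/adjoining construction that installs the distinguished group of size $m$ into a uniform frame supplied by Theorem~\ref{framebyanyothername}. The hypothesis that $0<g\leq 12$ or $g\equiv 0\pmod{12}$ is exactly the range in which a complete list of base-case frames (for small $u$ and small $m$, built by difference methods or direct/computer search) is available to seed these recursions, so that every admissible triple $(g,u,m)$ satisfying (a)--(e) is reached.

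I expect the main obstacle to be the sufficiency direction near the extremal bound $m = g(u-1)/2$ and for the smallest values of $u$ (notably $u=4$), where there is little slack for the recursion to operate and one must supply explicit base constructions rather than relying on inflation. Verifying that the chosen recursions, together with a finite set of seed frames, cover the entire admissible region without gaps --- especially matching the congruence $m \equiv g \pmod 6$ against the weighting options --- is the delicate bookkeeping that constitutes the real work, and it is this case analysis (carried out in~\cite{GRS, WG}) that the restriction on $g$ is designed to make tractable.
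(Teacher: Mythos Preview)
The paper does not prove this theorem at all: it is quoted directly from \cite{GRS, WG} and used as a black box in the proof of Theorem~\ref{Thm-main1}. So there is no ``paper's own proof'' to compare against, and your proposal goes well beyond what the paper itself offers.

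That said, your sketch is sound. The necessity arguments for (a)--(e) are correct; in particular your treatment of (e) via counting in a single partial parallel class is the standard one, and the $u=3$ transversal argument forcing $m=g$ is clean. Your description of the sufficiency side accurately reflects how \cite{GRS, WG} actually proceed: Stinson's fundamental frame construction (weighting points of a master GDD/TD and inserting ingredient frames) together with filling constructions, seeded by a finite list of directly constructed base cases for each admissible $g$, with the restriction $g \leq 12$ or $12 \mid g$ being precisely the range for which that base-case list has been completed. Your identification of the extremal region near $m = g(u-1)/2$ and small $u$ as the bottleneck is also accurate. The only thing to flag is that you are (appropriately) not claiming to reproduce the full sufficiency proof, only to outline its architecture; the actual case analysis in \cite{GRS, WG} is lengthy and not something one would redo here.
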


A {\em subsystem} of a Kirkman triple system $(V,{\mathcal B})$ with parallel classes ${\mathcal R}$ is a Kirkman triple system
$(V',{\mathcal B}')$ with parallel classes ${\mathcal R}'$ such that
$V'\subseteq V$,
${\mathcal B}'\subseteq {\mathcal B}$, and
each element of ${\mathcal R}'$ is a subset of some
element of ${\mathcal R}$.
Equivalently, the parallel classes of ${\mathcal R}'$ are those of ${\mathcal R}$ induced by restricting to the subset of points $V'$.
Note that each triple of ${\mathcal B}$ intersects $V'$ in either $0$, $1$ or $3$ points.

A {\em rainbow subsystem} of a rainbow Kirkman triple system with colouring $\phi$ is a subsystem which is also rainbow under the colouring $\phi$ restricted to the points of the subsystem.
Note that any rainbow Kirkman triple system trivially contains a rainbow subsystem of order $3$.
Also note that every rainbow Kirkman triple system is $3$-chromatic, save for the degenerate case of a KTS$(3)$, which is $2$-chromatic.

Kirkman frames are a key tool for constructing Kirkman triple systems, particularly Kirkman triple systems with subsystems. The standard methods are summarized in \cite{Stinson87}. These methods are here manipulated to obtain the proof of the following theorem, which may be regarded
as the main construction in this paper.

\begin{theorem}
	\label{Thm-RainbowFrame}
	Suppose there exists a Kirkman frame on $v$ points of type $T$.
	Suppose that for some positive integer $w$,  for each $t\in T$ there exists a {\rm KTS}$(3t+w)$ with a colouring $\phi$ which is rainbow and containing a rainbow subsystem on $w$ points.
	Then there exists a rainbow {\rm KTS}$(3v+w)$.
\end{theorem}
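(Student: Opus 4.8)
The construction is the standard "inflation of a Kirkman frame" (Stinson's method), adapted so that colours are tracked throughout. Start with a Kirkman frame $(V,\mathcal{G},\mathcal{B})$ of type $T$ on $v$ points, with its partition $\mathcal{P}$ of $\mathcal{B}$ into partial parallel classes, each missing exactly one group. Fix an additional point set $W$ of size $w$ disjoint from everything, and a rainbow $\mathrm{KTS}(w)$-structure... actually, more precisely, we will place on $W$ (together with one inflated group) a copy of the relevant $\mathrm{KTS}(3t+w)$. The inflation replaces each point $x\in V$ with a triple $\{x^{(1)},x^{(2)},x^{(3)}\}$ of three points coloured $1,2,3$ respectively; so $|V|$ becomes $3v$, and the total point set is $(V\times\{1,2,3\})\cup W$, of size $3v+w$. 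The colouring $\Phi$ assigns colour $i$ to $x^{(i)}$; the $w$ points of $W$ are coloured according to a fixed rainbow $3$-colouring of a $\mathrm{KTS}(w)$ — indeed, a rainbow $\mathrm{KTS}(w)$ exists since $\mathrm{KTS}(3t+w)$ contains a rainbow subsystem on $w$ points, and we take the \emph{same} copy of this rainbow $\mathrm{KTS}(w)$ on $W$ for every group.

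\medskip
\noindent\textbf{Assembling the blocks and parallel classes.} The blocks of the target $\mathrm{KTS}(3v+w)$ come in two kinds. First, for each group $G\in\mathcal{G}$ with $|G|=t$, the set $(G\times\{1,2,3\})\cup W$ has size $3t+w$; place on it a copy of the hypothesised rainbow $\mathrm{KTS}(3t+w)$ whose rainbow subsystem is exactly the fixed rainbow $\mathrm{KTS}(w)$ on $W$ (and whose colouring restricts to $\Phi$ on $G\times\{1,2,3\}$ — this is legitimate since a rainbow colouring of a $\mathrm{KTS}$ is, up to relabelling colours, determined by its three colour classes, and we may choose the labelling so that the three classes of the subsystem match those of $W$ and, simultaneously, point $x^{(i)}$ of $G\times\{1,2,3\}$ gets colour $i$; this uses that the colour classes of a rainbow $\mathrm{KTS}(3t+w)$ restricted to any rainbow subsystem are exactly the colour classes of the subsystem). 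Second, for each block $B=\{x,y,z\}\in\mathcal{B}$ of the frame, take the blocks of a copy of the $3$-$\mathrm{RGDD}$ of type $3^3$ (equivalently $\mathrm{TD}(3,3)$ resolved into transversals, cf.\ the use in Theorem~\ref{Thm-tripling}) on groups $\{x^{(1)},x^{(2)},x^{(3)}\}$, $\{y^{(1)},y^{(2)},y^{(3)}\}$, $\{z^{(1)},z^{(2)},z^{(3)}\}$; choose the transversal design so that every transversal (block) is rainbow, i.e.\ hits the three colours once each — concretely, the blocks $\{x^{(a)},y^{(b)},z^{(c)}\}$ with $a+b+c\equiv 0\pmod 3$. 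Every cross-group pair at the frame level is covered exactly once because the frame covers $\{x,y\}$ exactly once; every cross-group pair \emph{within} an inflated group is covered exactly once by the $\mathrm{KTS}(3t+w)$ placed on it; and pairs inside $W$ are covered exactly once, by the single copy of the rainbow $\mathrm{KTS}(w)$ — here one must verify the $\mathrm{KTS}(3t+w)$ copies all agree on $W$, which holds because each contains $W$ as the \emph{same} rainbow subsystem. This also shows no pair is covered twice. Counting blocks or pairs confirms we have a $\mathrm{KTS}(3v+w)$.

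\medskip
\noindent\textbf{Resolvability and rainbowness.} For resolvability: each partial parallel class $P\in\mathcal{P}$ of the frame misses some group $G$ and partitions $V\setminus G$. Inflating each block of $P$ into a resolved $\mathrm{TD}(3,3)$ yields, for each of the $3$ resolution classes of the $\mathrm{TD}$, a partition of $(V\setminus G)\times\{1,2,3\}$; combine this with one parallel class of the $\mathrm{KTS}(3t+w)$ on $(G\times\{1,2,3\})\cup W$ that, restricted to $W$, is a parallel class of the rainbow $\mathrm{KTS}(w)$ on $W$ — such a matching is possible because the $\mathrm{KTS}(3t+w)$ parallel classes induce the parallel classes of its subsystem on $W$. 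Since for each group $G$ with $|G|=t$ there are exactly $t/2$ partial parallel classes missing $G$ (Remark~\ref{FPC}), each inflating to $3$ classes, that is $3t/2$ classes, which is exactly the number of parallel classes of $\mathrm{KTS}(3t+w)$ minus those of the $\mathrm{KTS}(w)$... one checks the arithmetic matches: $\mathrm{KTS}(3t+w)$ has $(3t+w-1)/2$ parallel classes, of which $(w-1)/2$ restrict to (and are forced by) the parallel classes of the $\mathrm{KTS}(w)$, leaving exactly $3t/2$ "free" ones to be paired with the $3\cdot(t/2)$ inflated frame classes associated to $G$. Finally, the parallel classes of the $\mathrm{KTS}(w)$ on $W$ themselves, together with the $W$-restricted parallel class from each group's $\mathrm{KTS}(3t+w)$, get glued across all groups — but this only works if those restricted classes coincide for all groups, which again is exactly the "same rainbow subsystem" hypothesis; so the $(w-1)/2$ classes of the $\mathrm{KTS}(w)$ extend to $(w-1)/2$ parallel classes of the whole design. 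Rainbowness: $\Phi$ has the three colour classes each of size $v + w/3$ (namely $V\times\{i\}$ together with the colour-$i$ class of $W$), i.e.\ $(3v+w)/3$, and no block is monochromatic — the $\mathrm{TD}$-blocks are rainbow by construction, and the blocks inside each $(G\times\{1,2,3\})\cup W$ are non-monochromatic since the placed $\mathrm{KTS}(3t+w)$ is rainbow. A rainbow parallel class: take a rainbow parallel class of the $\mathrm{KTS}(w)$ on $W$ (which exists, and is unique) and check it extends to a rainbow parallel class of the whole system; alternatively note the extended design, being $3$-chromatic and equitably $3$-coloured with no monochromatic block, must itself have a rainbow parallel class by the counting argument in the introduction once we exhibit one rainbow block's parallel class is all-rainbow.

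\medskip
\noindent\textbf{Main obstacle.} The routine part is the pair-counting that certifies a Steiner triple system. The delicate part — and the one I would write most carefully — is the \emph{bookkeeping of parallel classes and of the common copy of $W$}: making sure that the rainbow subsystem on $W$ is literally the same sub-$\mathrm{KTS}(w)$ inside every one of the $|\mathcal{G}|$ copies of $\mathrm{KTS}(3t+w)$ (so that the blocks and the induced parallel classes on $W$ agree and can be identified), and that the count $3t/2$ of inflated frame-classes missing $G$ matches exactly the number of "$W$-completing" parallel classes available in $\mathrm{KTS}(3t+w)$ after removing the subsystem's own $(w-1)/2$ classes. If those numbers did not match we could not resolve, so verifying $3t/2 + (w-1)/2 = (3t+w-1)/2$ (trivially true) and, more importantly, that the structural pairing actually realises it, is where the real content lies. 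I expect the cleanest writeup fixes, once and for all, a rainbow $\mathrm{KTS}(w)$ with its colour classes labelled $1,2,3$ and its parallel classes labelled, then demands that each $\mathrm{KTS}(3t+w)$ be presented with this as a labelled subsystem — reducing everything else to routine gluing.
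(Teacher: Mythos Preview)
Your overall architecture matches the paper's: inflate the frame by a factor of $3$, colour $(x,i)$ with colour $i$, then fill each inflated group (together with a common set $W$) with a rainbow $\mathrm{KTS}(3t+w)$ whose sub-$\mathrm{KTS}(w)$ sits on $W$. The resolvability bookkeeping you sketch, once the confusion between the ``subsystem-inducing'' classes and the ``free'' classes is sorted out, is exactly the paper's pairing of $\mathcal{R}_2(G)$ with the inflated partial classes missing $G$, and the gluing of the $\mathcal{R}_1$ classes across all groups.

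There is, however, a genuine error in the blow-up of a frame block. You assert that one can choose a $\mathrm{TD}(3,3)$ on $\{x^{(1)},x^{(2)},x^{(3)}\}\cup\{y^{(1)},y^{(2)},y^{(3)}\}\cup\{z^{(1)},z^{(2)},z^{(3)}\}$ in which every block is rainbow, and you offer the blocks $\{x^{(a)},y^{(b)},z^{(c)}\}$ with $a+b+c\equiv 0\pmod 3$. But that set contains $\{x^{(i)},y^{(i)},z^{(i)}\}$ for each $i$, all of which are monochromatic. More fundamentally, \emph{no} $\mathrm{TD}(3,3)$ with groups coloured $1,2,3$ can have every block rainbow: a $\mathrm{TD}(3,3)$ must contain a block through $x^{(1)}$ and $y^{(1)}$, and $\{1,1,c\}$ is never $\{1,2,3\}$. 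So your coloured design has monochromatic triples and is not a weak $3$-colouring at all.

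The paper avoids this by abandoning the (impossible) demand that inflated blocks be rainbow and instead writing down an explicit $3$-RGDD of type $3^3$ whose nine blocks are each \emph{two}-coloured (e.g.\ $\{(x,0),(y,0),(z,1)\}$, $\{(x,1),(y,1),(z,2)\}$, $\{(x,2),(y,2),(z,0)\}$ as one resolution class). None is monochromatic, which is all that is required for the weak colouring; and since these inflated blocks are never used in the final rainbow parallel class, their failure to be rainbow is harmless. The rainbow parallel class is then assembled entirely from the rainbow classes of the $\mathrm{KTS}(3t+w)$ ingredients, one per group (with the sub-$\mathrm{KTS}(w)$ rainbow class contributed only once, by the distinguished group). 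Your paragraph on ``Rainbowness'' gestures at this but does not actually exhibit the rainbow parallel class; the paper does so explicitly, and you will need to as well once the $\mathrm{TD}$ blocks are no longer rainbow.
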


\begin{proof}
	Let	$(V,{\mathcal G},{\mathcal B})$ be a Kirkman frame of type $T$; note that necessarily this means that every $t\in T$ is even (see Remark~\ref{FPC}).
	Further, the existence of a KTS$(3t+w)$ implies that $w \equiv 3 \pmod{6}$,
	so we let $w=3w'$, where $w'$ is odd.
	Let $W=\{\infty_i\mid i\in \{1,2,\ldots,w'\}\}$, $V'=V\times \{0,1,2\}$ and let $W'=W\times \{0,1,2\}$.
	 	We will construct a rainbow KTS$(3v+w)$, $(V'\cup W',{\mathcal B}'')$
	with a colouring $\phi'$ which maps $(x,i)$ to colour $i$ for each $x\in V\cup W$ and $i\in \{0,1,2\}$.
	
	For each $B=\{x,y,z\}\in {\mathcal B}$,
	construct the following parallel classes of blocks on the set of points $B\times \{0,1,2\}$:
	\begin{itemize}
		\item  $\{\{(x,0),(y,0),(z,1)\}, \{(x,1),(y,1),(z,2)\},\{(x,2),(y,2),(z,0)\}\}$;
		\item  $\{\{(x,0),(y,2),(z,2)\}, \{(x,1),(y,0),(z,0)\},\{(x,2),(y,1),(z,1)\}\}$;
		\item  $\{\{(x,0),(y,1),(z,0)\}, \{(x,1),(y,2),(z,1)\},\{(x,2),(y,0),(z,2)\}\}$.
	\end{itemize}
	Let the resultant set of blocks be ${\mathcal B}'$.
	Let ${\mathcal G}'=\{G\times \{0,1,2\} \mid G\in {\mathcal G}\}$.
	Note that the structure $(V',{\mathcal G}',{\mathcal B}')$ is a Kirkman frame of type $\{3t\mid t\in T\}$. Let ${\mathcal P}'$ be the associated set of partial parallel classes and note that by Remark~\ref{FPC}, each group $G'\in {\cal G}'$ of size $3t$ is missed by $3t/2$ partial classes.
	Observe that each block of ${\mathcal B}'$ receives exactly two distinct colours under the colouring $\phi'$.

	We fix a group $G_0\in {\cal G}'$ of size $3t_0$ and 
		add to ${\mathcal B}''$ a copy of the KTS$(3t_0+w)$, $K_0$, on $G_0\cup W'$ so that the subsystem of order $w$ is on the points of $W'$.  Let ${\mathcal R}_1(G_0)$ be the set of
	$(w-1)/2$ parallel classes in $K_0$ which each include a partition of the points of $W'$.  That is, blocks in each of these parallel classes intersect $W$ in either $0$ or $3$ points.
	 Let
	${\mathcal R}_2(G_0)$ be the set of remaining parallel classes, where
	$|{\mathcal R}_2(G_0)|=(3t_0+w-1)/2-|{\mathcal R}_1(G_0)|=3t_0/2$.
Further, we
	add to ${\mathcal B}''$
the blocks of the rainbow class so that they are of the form	$\{(x,0), (y,1), (z,2)\}$,  consistent with the colouring $\phi'$.

	For every other group $G \in {\cal G}'\setminus \{G_0\}$ of size $3t$, we remove the blocks of the subsystem of order $w$ from the KTS$(3t+w)$, creating a KTS$(3t+w)$ with a hole of size $w$, $K_1(G)$.
	We
		add to ${\mathcal B}''$
	 a copy of $K_1(G)$ on $G\cup W'$, so that the hole is on the points of $W'$.  Further, we again place the blocks of the rainbow class so that they are of the form	$\{(x,0), (y,1), (z,2)\}$, consistent with the colouring $\phi'$.
Let ${\mathcal R}_1(G)$ (resp.\ ${\mathcal R}_2(G)$) denote the set of ``short'' (resp.\ ``full'') classes of $K_1(G)$.
Note that the $|{\mathcal R}_1(G)|=(w-1)/2$ ``short'' classes cover $G$,
and each of the $|{\mathcal R}_2(G)|=(3t+w-1)/2-|{\mathcal R}_1(G)|=3t/2$ ``full'' classes covers $G\cup W'$.

	Observe that
	$(V'\cup W',{\mathcal B}'')$
	 defined above
	  is
	 a Steiner triple system, 
	 and furthermore that $\phi'$ is a 
	$3$-colouring of this Steiner triple system.
	It remains to show that this system is also a Kirkman triple system with a rainbow parallel class.

	For each group $G\in {\mathcal G'}$, by Remark~\ref{FPC}, there exist $|G|/2$ partial parallel classes $P\in {\mathcal P'}$ such that $P$ excludes the points in $G$.
	Match these with the $|G|/2$
	parallel classes of
    ${\mathcal R}_2(G)$
    on $G\cup W'$
	to create $|G|/2$ parallel classes on $V'$.
	
	Next, we create the rainbow parallel class by taking the rainbow parallel class from ${\mathcal R}_1(G_0)$ (placed on  $G_0\cup W'$) together with the rainbow parallel class from ${\mathcal R}_1(G)$ for each $G\neq G_0$.
	Finally, create $(w-3)/2$ more parallel classes by
	combining the non-rainbow parallel classes from 	${\mathcal R}_1(G_0)$
	with the non-rainbow parallel classes from
	${\mathcal R}_1(G)$ for each $G\neq G_0$.
	\end{proof}

We are now ready to prove Theorem~\ref{Thm-main1}, that there is a $3$-chromatic
rainbow KTS$(v)$ for every $v\equiv 3\pmod{6}$.

\begin{proof}[{\it Proof of Theorem \ref{Thm-main1}}]
The cases $v\in X = \{3, 9, 15, 21, 33, 39, 57, 69\}$ are dealt with in
Lemma~\ref{Lemma-smallrainbows};
 therefore we assume $v\not\in X$. Set
 $\omega=(v-3)/3$.
 If $\omega\not\equiv 10\pmod{12}$, let
$g \in \{2, 4, 6\}$ such that $g \equiv \omega\pmod{6}$ and set $(m, u) = (g, \omega/g-1)$. Otherwise,
set $(g, m, u) = (4, 10, (\omega-10)/4)$.
 Since $v\not\in X$, one can check that $(g, m, u)$
satisfies
Theorem~\ref{Thm-ivebeenframed}: in particular, $u\geq 3$, and $u \neq 3$ when $m \neq g$ (that
is, when $\omega\equiv 10 \pmod{12}$. Therefore, there exists a Kirkman frame of
type $g^um^1$. Also, by
Lemma~\ref{Lemma-smallrainbows}, there exists a rainbow KTS$(3m + 3)$, for
$m = 2, 4, 6, 10$. Then, by
 Theorem~\ref{Thm-RainbowFrame},
 there exists a rainbow KTS$(3\omega +3)$.
\end{proof}

\section{\boldmath{$4$}-chromatic Kirkman triple systems}

\label{thefourcase}

We further exploit the standard frame methods (see \cite{Stinson87})  to obtain results on $4$-chromatic Kirkman triple systems.

\begin{lemma}
	Suppose that there exists a $4$-coloured (not necessarily $4$-chromatic) $4$-{\rm GDD} of type $4^u$ such that:
	\renewcommand{\labelenumi}{\alph{enumi})}
	\begin{enumerate}
		\item
		 in any block there are at most two points of the same colour; and
		 \item
		in any group there is exactly one point of each colour.
	\end{enumerate}
	Then, for any $g\geq 1$, there exists a 
	$4$-coloured $3$-frame of type $(8g)^u$ such that each group is equitably coloured.
\label{blowinup}
\end{lemma}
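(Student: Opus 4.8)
The plan is to prove this by a weighting (inflation) construction — the frame analogue of Wilson's fundamental construction. I would take the given $4$-coloured $4$-GDD of type $4^u$, say with point set $X$, group set ${\mathcal G}$, block set ${\mathcal A}$, and colouring $\phi\colon X\to\{1,2,3,4\}$, as the ``master'' design, replace each point by $2g$ clones, and fill each block of size~$4$ with a Kirkman frame of type $(2g)^4$. Such a frame exists for every $g\geq1$ by Theorem~\ref{framebyanyothername}, since $2g$ is even, there are $4\geq4$ groups, and $3\mid 2g(4-1)=6g$. Concretely I would set $V'=X\times\{1,\dots,2g\}$, set ${\mathcal G}'=\{G^{\ast}:G\in{\mathcal G}\}$ with $G^{\ast}:=G\times\{1,\dots,2g\}$ of size $8g$, and for every block $A=\{a,b,c,d\}\in{\mathcal A}$ place on $A\times\{1,\dots,2g\}$ a Kirkman frame ${\mathcal F}_A$ whose four groups are the clone classes $\{a\}\times\{1,\dots,2g\},\dots,\{d\}\times\{1,\dots,2g\}$; the blocks of all the ${\mathcal F}_A$ together form the block set ${\mathcal B}'$.

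Next I would check that $(V',{\mathcal G}',{\mathcal B}')$ is a $3$-GDD of type $(8g)^u$: two clones in distinct groups of ${\mathcal G}'$ come from two master points in distinct groups of ${\mathcal G}$, hence from a unique master block $A$, and so lie in a unique block of ${\mathcal F}_A$; two clones in a common group of ${\mathcal G}'$ share a master point or have master points in a common group, and hence lie in no block. For the frame structure I would use Remark~\ref{FPC}: each ${\mathcal F}_A$ has, for each $x\in A$, exactly $g$ partial parallel classes missing the clone class of $x$, say $\pi^{x}_{A,1},\dots,\pi^{x}_{A,g}$, each partitioning $(A\setminus\{x\})\times\{1,\dots,2g\}$. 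For a master point $x$ in master group $G$ and each $j\in\{1,\dots,g\}$ I would set $\Pi_{x,j}=\bigcup_{A\ni x}\pi^{x}_{A,j}$. Since the sets $A\setminus\{x\}$, over master blocks $A\ni x$, partition $X\setminus G$, the blocks of $\Pi_{x,j}$ partition $V'\setminus G^{\ast}$; and every block of every ${\mathcal F}_A$ lies in exactly one $\Pi_{x,j}$, namely for the unique $x\in A$ whose clone class its partial parallel class in ${\mathcal F}_A$ misses, and the unique matching index $j$. So the family $\{\Pi_{x,j}:x\in X,\,j\in\{1,\dots,g\}\}$ partitions ${\mathcal B}'$ into partial parallel classes, and for each group $G^{\ast}$ exactly $|G|\cdot g=4g=|G^{\ast}|/2$ of them miss $G^{\ast}$, consistent with Remark~\ref{FPC}; thus $(V',{\mathcal G}',{\mathcal B}')$ is a Kirkman frame of type $(8g)^u$.

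Finally I would colour the clone $(x,i)$ with $\phi(x)$. Condition~(b) gives that each master group has exactly one point of each colour, so each $G^{\ast}$ has exactly $2g$ points of each of the four colours and is therefore equitably coloured, with all four colour classes of size $2g$. Condition~(a) says no master block has three points of one colour; since every block of every ${\mathcal F}_A$ consists of clones of three distinct points of a single master block $A$, no such block can be monochromatic. Hence the colouring is a $4$-colouring of the $3$-frame with every group equitably coloured, as required.

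The construction itself is standard, so I expect the only real obstacle to be the bookkeeping in the second paragraph: confirming that the partial parallel classes of the ingredient frames ${\mathcal F}_A$ reassemble into partial parallel classes of the composite frame, each missing the correct group. Conditions~(a) and~(b) are evidently designed so that this standard assembly also preserves properness of the $4$-colouring and the equitability of the groups, so once the frame bookkeeping is in place nothing further is needed.
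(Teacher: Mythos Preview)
Your proposal is correct and follows essentially the same approach as the paper: inflate each point of the $4$-GDD by $2g$ clones, place a Kirkman frame of type $(2g)^4$ on each blown-up block (via Theorem~\ref{framebyanyothername}), and colour each clone with the colour of its master point so that condition~(a) rules out monochromatic triples and condition~(b) gives equitable groups. The only difference is cosmetic: where you spell out the reassembly of partial parallel classes explicitly, the paper simply invokes Stinson's Fundamental Frame Construction (Construction~3.1 of~\cite{Stinson87}) and sketches why the blocks through each master point yield the required partial parallel classes.
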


\begin{proof}
Let $(V, {\mathcal G}, {\mathcal B})$ be a $4$-GDD of type $4^u$ satisfying the conditions of the lemma.
We construct a 
$4$-coloured $3$-frame $(V^{\ast},{\mathcal G}^{\ast}, {\mathcal B}^{\ast})$ of type $(8g)^u$ as follows.
The point set will be
\[V^{\ast}= V\times\{1,2,\ldots,2g\} = \{(v, i)\mid v\in V, i\in \{1,2,\ldots,2g\}\}.\]
For each $v \in V$ and $i \in \{1,2,\ldots,2g\}$, the vertex $(v,i)$ is coloured with the same colour as the vertex $v$ in the original $4$-GDD.
The set of groups will be
$${\mathcal G}^{\ast}
= \{ G \times \{1,2,\ldots,2g\} \mid G \in \mathcal{G}\}.
$$
Observe that in each group, there are exactly $2g$ points of each colour, as required by the conditions of this lemma.

We next describe the block set ${\mathcal B}^{\ast}$.
By Theorem  \ref{framebyanyothername}, there exists a $3$-frame
 of type $(2g)^4$.
For each $B\in {\mathcal B}$, place a copy, $F(B)$, of this $3$-frame on the point set
\linebreak
$\{(v,i) \mid v\in B, i\in \{1,2,\ldots,2g\}\}$ so that the above groups are preserved.
By property (a), all the triples thus constructed are 
$4$-coloured.

The fact that the resultant  $(V^{\ast},{\mathcal G}^{\ast},{\mathcal B}^{\ast})$ is a $3$-frame follows by the Fundamental Frame Construction (Construction 3.1 from \cite{Stinson87}, see also~\cite{Stinson91}).
To see this, consider the original $4$-GDD of type $4^u$. Let 
$v\in V$, let $G_v$ be the group of ${\mathcal G}$ containing $v$ and let 
${\mathcal B}_v\subset {\mathcal B}$ be the set of blocks (each of size $4$) which contain $v$.  Next, by the definition of a GDD, the set of triples 
$\{B\setminus \{v\}\mid v\in {\mathcal B}_v \}$ 
is a partition of $V\setminus G_v$. 
It follows that the set of $3$-frames of the form $\{F(B)\mid B\in {\mathcal B}_v\}$ partition into 
$g$ partial parallel classes of $V^{\ast}$, each of which exclude $G_v \times \{1,2,\dots ,2g\}$. 
Considering each vertex of $v\in V$ in turn,  ${\mathcal B}^{\ast}$ partitions into the required partial parallel classes.  
\end{proof}

\begin{comment}
\begin{lemma}
\label{examp44}
There exists a $4$-coloured (not necessarily $4$-chromatic) $4$-{\rm GDD} of type $4^4$ with block size $4$ such that:
\renewcommand{\labelenumi}{\alph{enumi})}
\begin{enumerate}
	\item in any block there are at most two points of the same colour; and
	\item in any group there is exactly one point of each colour.
\end{enumerate}
\end{lemma}
\end{comment}

\begin{comment}
\begin{proof}
	Let the groups be of the form $G_j = \{i_j\mid i\in \mathbb{Z}_4\}$, where $j\in \mathbb{Z}_4$.
	For each $i, j\in \mathbb{Z}_4$, the point $i_j$ receives colour $i$.
	The blocks are listed below so that the ordered tuple  $(x,y,z,w)$  corresponds	to the block
	$\{x_0,y_1,z_2,w_3\}$:
$$
\begin{array}{l}
(0,0,1,2), (0,1,3,1), (0,2,2,0), (0,3,0,3),  \\
(1,0,0,1), (1,1,2,2), (1,2,3,3), (1,3,1,0),  \\
(2,0,2,3), (2,1,0,0), (2,2,1,1), (2,3,3,2),  \\
(3,0,3,0), (3,1,1,3), (3,2,0,2), (3,3,2,1).  \\
\end{array}$$

\vspace{-1\baselineskip}
\end{proof}
\end{comment}

\begin{lemma}
	Let $u\equiv 1$ or $4 \pmod{12}$ and $u\geq 4$.
	Then there exists a $4$-coloured
	(not necessarily $4$-chromatic)
 	$4$-{\rm GDD} of type $4^u$ with block size $4$ such that:
	\renewcommand{\labelenumi}{\alph{enumi})}
	\begin{enumerate}
		\item in any block there are at most two points of the same colour; and
		\item in any group there is exactly one point of each colour.
	\end{enumerate}
\label{blowitup}
\end{lemma}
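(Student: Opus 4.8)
The plan is to realize the required $4$-GDD of type $4^u$ as a ``coloured'' version of a resolvable design. The key observation is that a $4$-GDD of type $4^u$ with block size $4$ is exactly a transversal-design-like object: indeed, the existence of a $4$-GDD of type $4^u$ is equivalent (by deleting a parallel class, or by the standard correspondence) to the existence of a resolvable $(4u,4,1)$-design structure, and more usefully, a $4$-GDD of type $4^u$ is equivalent to $3$ mutually orthogonal Latin squares of order $u$ being unnecessary --- instead I would build it from a quadruple system. The cleanest route: start from a quadruple system $Q(v)$ with $v = 4u$ and $v \equiv 4 \pmod{12}$ (so $u \equiv 1 \pmod 3$), which exists by Hanani's theorem cited in the introduction. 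A $Q(4u)$ that is resolvable into parallel classes of $u$ blocks each would, upon taking one parallel class as the set of groups, yield precisely a $4$-GDD of type $4^u$. So first I would invoke the known existence of a resolvable $(4u,4,1)$-BIBD (equivalently a near-resolvable or resolvable quadruple system) for the stated congruences --- resolvable $S(2,4,v)$ exist iff $v \equiv 4 \pmod{12}$ --- and then the condition $u \equiv 1$ or $4 \pmod{12}$, $u \geq 4$ guarantees $4u \equiv 4$ or $16 \equiv 4 \pmod{12}$, so a resolvable $Q(4u)$ exists and its groups-from-a-parallel-class construction gives the underlying uncoloured $4$-GDD of type $4^u$.

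Next I would impose the colouring. I want a $4$-colouring of the $4u$ points such that each group (block of the chosen parallel class) is rainbow, i.e.\ gets all four colours (this is condition (b)), and no block of the GDD has three or four points of one colour (condition (a)). Equivalently, on the point set $\mathbb{Z}_4 \times (\text{index set of size } u)$ I want to colour $(c, x)$ by $c$; then (b) is automatic once I label the groups compatibly, and (a) becomes the statement that in the chosen resolvable design no block meets any single ``colour layer'' $\mathbb{Z}_{\{c\}} \times (\cdot)$ in $\geq 3$ points. I expect this to follow from a direct construction over a group: take $V = \mathbb{Z}_4 \times \mathbb{Z}_u$ (or $\mathbb{Z}_{4u}$ with an appropriate subgroup structure) and build the GDD via a difference family / starter-adder construction, where one checks that each base block has its four points in (nearly) distinct colour classes by construction, and the colour pattern is preserved under the translation action. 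For the small cases not covered by a uniform construction --- certainly $u=4$, and perhaps $u=13$ --- I would exhibit an explicit $4$-GDD of type $4^u$ together with its colouring, as the commented-out Lemma~\ref{examp44} in the excerpt already does for $u=4$ (the $16$ blocks listed there over $\mathbb{Z}_4$ groups with point $i_j$ getting colour $i$). So the proof structure is: (1) base case(s) by explicit listing; (2) a recursive or direct group-based construction handling the general $u$; (3) verification of (a) and (b) in each case.

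The main obstacle I anticipate is arranging the colouring to satisfy condition (a) uniformly across a recursive construction: standard recursive constructions for $4$-GDDs (Wilson-type, filling in groups of a larger GDD or TD) do not obviously respect a prescribed vertex colouring, and ``blowing up'' a coloured small GDD can create blocks with three points of one colour unless the inflation is itself done with a coloured ingredient. I would sidestep this by doing the recursion at the level of a $4$-coloured GDD ingredient --- much as Lemma~\ref{blowinup} uses a $4$-coloured $4$-GDD of type $4^u$ as input to a frame construction --- so that colour-respecting ingredients compose to colour-respecting outputs. Concretely, I expect the proof to reduce to: establish a $4$-coloured $4$-GDD of type $4^4$ explicitly (the commented Lemma~\ref{examp44}), then use a coloured version of the ``$4$-GDD of type $4^u$ from a TD'' construction where the transversal-design ingredient contributes blocks that are automatically rainbow in the first coordinate, leaving only a finite check that the base ingredients over the admissible residues $u \equiv 1, 4 \pmod{12}$ can be assembled. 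If a fully uniform construction proves elusive, a fallback is to cite the rich existence theory for $S(2,4,v)$ with resolvability and then show that \emph{any} such design admits a recolouring of its points (a proper vertex colouring of an associated hypergraph with bounded codegree) meeting (a) and (b) --- this would be a short probabilistic or greedy argument since each block imposes only a mild constraint.
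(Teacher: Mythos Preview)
Your primary route --- build the $4$-GDD of type $4^u$ from a resolvable $Q(4u)$ by taking a parallel class as groups, then colour --- is not the paper's, and the obstacle you yourself flag is real: once the design is fixed there is no evident mechanism forcing every block to meet each colour layer in at most two points, and neither the probabilistic nor the greedy fallback is fleshed out (a block has four points and four colours, so ``bounded codegree'' arguments do not obviously give the needed three-colour avoidance on every block simultaneously).

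The paper sidesteps all of this by working on $u$ points rather than $4u$. Since $u\equiv 1$ or $4\pmod{12}$, a quadruple system $Q(u)$ exists (Hanani). Inflate each point $v$ of $Q(u)$ to a rainbow group $\{(v,i):i\in\mathbb{Z}_4\}$ with $(v,i)$ coloured $i$; then on each block $\{w,x,y,z\}$ of $Q(u)$ place the explicit coloured $4$-GDD of type $4^4$ from the $u=4$ base case on the four groups $G(w),G(x),G(y),G(z)$. Condition (b) is automatic from the inflation, and condition (a) is inherited block-by-block from the $4^4$ ingredient. No resolvability, no transversal designs, no after-the-fact recolouring is needed. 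Your late-stage remark about composing a coloured $4^4$ ingredient is pointing in the right direction, but the master design should be $Q(u)$, not a TD, and the verification then becomes a one-line inheritance rather than the ``finite check'' you anticipate.
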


\begin{proof}
	We first consider the
	case $u=4$.
		Let the groups be of the form $G_j = \{i_j\mid i\in \mathbb{Z}_4\}$, where $j\in \mathbb{Z}_4$.
	For each $i, j\in \mathbb{Z}_4$, the point $i_j$ receives colour $i$.
	The blocks are listed below so that the ordered tuple  $(x,y,z,w)$  corresponds	to the block
	$\{x_0,y_1,z_2,w_3\}$:
	$$
	\begin{array}{l}
		(0,0,1,2), (0,1,3,1), (0,2,2,0), (0,3,0,3),  \\
		(1,0,0,1), (1,1,2,2), (1,2,3,3), (1,3,1,0),  \\
		(2,0,2,3), (2,1,0,0), (2,2,1,1), (2,3,3,2),  \\
		(3,0,3,0), (3,1,1,3), (3,2,0,2), (3,3,2,1).  \\
	\end{array}$$

Otherwise $u>4$.
	Since $u\equiv 1$ or $4 \pmod{12}$, there exists a quadruple system $Q(u)=(V,{\mathcal B})$ of order $u$.  For each point $v\in V$, construct a group $G(v)=\{(v,i)\mid i\in \mathbb{Z}_4\}$, where $(v,i)$ receives colour $i$ for each $i\in \mathbb{Z}_4$.
	Then, for each block $B\in {\mathcal B}$, place the coloured GDD$(4^4$) constructed
	for the case $u=4$ on the groups $\{G(v)\mid v\in B\}$.
\end{proof}

The following lemma follows from the standard filling-in construction for frames; see Construction~5.1 of~\cite{Stinson87}.  
This construction allows us to build a {\rm KTS}$(4gu+1)$ from a $3$-frame of type $(4g)^u$ by adding a new point $\infty$ to the points of the frame, and for each group $G$, adding the blocks of a {\rm KTS}$(4g+1)$ on the points of $G \cup \{\infty\}$.  Supposing the frame admits a $4$-colouring in which each group contains an equal number of points of each colour, by choosing an equitably $4$-colourable {\rm KTS}$(4g+1)$, we may identify the points appropriately so that the resulting {\rm KTS}$(4gu+1)$ is itself $4$-colourable; this colouring is necessarily equitable since the colour on $\infty$ occurs on $gu+1$ vertices and the other three occur on $gu$ vertices.  

\begin{lemma}
Suppose that there exists a 
$4$-coloured $3$-frame of type $(4g)^u$ such that the points in each group are coloured equitably. Further, suppose that there exists a $4$-chromatic {\rm KTS}$(4g+1)$ that admits an equitable 
$4$-colouring.
	Then there exists a $4$-chromatic {\rm KTS}$(4gu+1)$  that admits an equitable 
	$4$-colouring.
\label{plug4}
\end{lemma}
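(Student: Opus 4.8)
The plan is to apply the standard filling-in construction for frames, as described in the paragraph immediately preceding the lemma statement (Construction~5.1 of~\cite{Stinson87}), while tracking the colouring carefully. First I would take a $4$-coloured $3$-frame $(V,{\mathcal G},{\mathcal B})$ of type $(4g)^u$ in which each group is equitably coloured, so each group contains exactly $g$ points of each of the four colours. Let ${\mathcal P}$ be the associated partition of ${\mathcal B}$ into partial parallel classes; by the definition of a frame (and Remark~\ref{FPC} applied in the $3$-uniform setting), each group $G$ of size $4g$ is excluded by exactly $2g$ partial parallel classes. I would adjoin a new point $\infty$, and for each group $G\in{\mathcal G}$ place a copy of a $4$-chromatic, equitably $4$-colourable KTS$(4g+1)$ on the point set $G\cup\{\infty\}$. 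The colour classes of this KTS$(4g+1)$ have sizes $g+1,g,g,g$; I would identify the points so that the colour class of size $g+1$ is the one containing $\infty$, and so that the three colour classes of size $g$ inside $G$ match three of the four colour classes of $G$ in the frame, while the remaining $g$ points of $G$ (those of the fourth colour) together with $\infty$ form the colour-$(g+1)$ class. Doing this consistently for every group forces the colour on $\infty$ to be the same across all groups, which is exactly why we need a colour-rotating argument: as $G$ ranges over the $u$ groups, one must be able to cyclically permute which of the four frame-colours plays the role of the ``$\infty$ colour'' — but actually, since each group has the same equitable profile $g^4$, we may simply fix one colour, say colour $4$, globally, assign $\infty$ colour $4$, and in every group place the KTS$(4g+1)$ so that its $(g+1)$-class is (colour-$4$ points of $G$)$\cup\{\infty\}$. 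This is permissible because the KTS$(4g+1)$'s equitable colouring can have its colour $g+1$ class placed on any prescribed $g+1$ points that the construction requires, after relabelling.

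Next I would verify the resolvability. For each group $G$, the KTS$(4g+1)$ on $G\cup\{\infty\}$ has $(4g)/2=2g$ parallel classes, each a partition of $G\cup\{\infty\}$. Pair these $2g$ parallel classes bijectively with the $2g$ partial parallel classes of ${\mathcal P}$ that exclude $G$; the union of a partial parallel class (covering $V\setminus G$) with a KTS-parallel-class (covering $G\cup\{\infty\}$) is a parallel class of $V\cup\{\infty\}$. Ranging over all groups, these unions give $\sum_{G} 2g = 2gu$ parallel classes; since $|V\cup\{\infty\}| = 4gu+1$, a KTS on this many points has exactly $(4gu)/2 = 2gu$ parallel classes, so every block is accounted for and the resulting design is a KTS$(4gu+1)$.

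Then I would check that $\phi$ — the colouring agreeing with the frame colouring on $V$ and giving $\infty$ colour $4$ — is a weak $4$-colouring. Blocks come in two types: those internal to some copy of the KTS$(4g+1)$, which are non-monochromatic because that KTS is weakly $4$-coloured (and its colour-$4$ class was placed exactly on (colour-$4$ points of $G$)$\cup\{\infty\}$, so the induced colouring on $G\cup\{\infty\}$ is precisely the KTS's own colouring); and those coming from the frame blocks, which lie in ${\mathcal B}$ and hence are already non-monochromatic since the frame is $4$-coloured. The colour class sizes are $gu+1$ for colour $4$ and $gu$ for each of the other three, so the colouring is equitable. Finally, to see the KTS$(4gu+1)$ is $4$-chromatic and not $3$-chromatic: it contains, as a subsystem on $G\cup\{\infty\}$ for any group $G$, a copy of the $4$-chromatic KTS$(4g+1)$; any weak $3$-colouring of the whole system would restrict to a weak $3$-colouring of this subsystem, contradicting $\chi(\text{KTS}(4g+1))=4$. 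Hence $\chi(\text{KTS}(4gu+1)) = 4$.

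The main obstacle is the bookkeeping in the identification step: one must place each copy of the KTS$(4g+1)$ on $G\cup\{\infty\}$ so that its equitable colouring is \emph{compatible} both with the already-fixed colour on $\infty$ and with the already-fixed colours on the points of $G$ inherited from the frame. This works because the frame colouring restricted to $G$ is exactly of type $g^4$, the same as the non-$\infty$ part of the KTS$(4g+1)$'s equitable colouring, so an appropriate relabelling of the KTS$(4g+1)$ exists; making this precise (and noting it can be done independently for each group since the copies overlap only in $\infty$, whose colour is fixed once and for all) is the only delicate point, and it is routine.
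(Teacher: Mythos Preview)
Your proposal is correct and follows essentially the same approach as the paper, which simply cites Construction~5.1 of~\cite{Stinson87} and notes that the colour classes can be matched up group-by-group because each group is equitably coloured. You supply the details the paper omits --- the resolvability count, the verification that both frame blocks and filled-in blocks are non-monochromatic, and the lower-bound argument via the embedded $4$-chromatic KTS$(4g+1)$ subsystem --- and the brief detour about a ``colour-rotating argument'' is unnecessary (as you yourself observe), since fixing the colour of $\infty$ once and for all works uniformly across all groups.
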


In Section~\ref{smallorders}, we previously observed the existence of $4$-chromatic Kirkman triple systems of order $33$ which admit a colouring of type $8^39^1$, and hence we have the following.

\begin{lemma}
There exists a $4$-chromatic {\rm KTS}$(33)$ such that the points are equitably coloured for some 
$4$-colouring.
\label{4chrom33}
\end{lemma}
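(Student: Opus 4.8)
The plan is to produce a single explicit $4$-chromatic KTS$(33)$ together with an equitable $4$-colouring of it, and then verify both properties. Since Section~\ref{smallorders} already asserts (based on a computational analysis of the thirty KTS$(33)$ of Tonchev and Vanstone~\cite{TV}) that none of those designs is $3$-chromatic but each admits an equitable $4$-colouring of type $8^39^1$, the cleanest route is simply to invoke that fact: fix one such KTS$(33)$ together with its $4$-colouring, observe that the $4$-colouring is equitable by construction (the colour classes have sizes $8,8,8,9$, which differ by at most one), and note that $4$-chromaticity follows because the system has been verified to have no weak $3$-colouring while visibly having a weak $4$-colouring.

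The key steps, in order, are: (i) point to the thirty KTS$(33)$ of~\cite{TV} and the computational verification recorded earlier in Section~\ref{smallorders} that none is $3$-chromatic; (ii) exhibit (or cite from the appendix) the explicit $8^39^1$ equitable $4$-colouring of one of them; (iii) conclude that this KTS$(33)$ has chromatic number exactly $4$ and that the displayed colouring is equitable. Alternatively, if a self-contained argument is preferred, one could give the explicit list of $165$ blocks of a chosen KTS$(33)$ arranged into its $16$ parallel classes, give the partition of the $33$ points into classes of sizes $8,8,8,9$, check directly that no block is monochromatic (a finite check), and separately argue $\chi \geq 4$ either by the cited exhaustive search or by a structural obstruction; but since Theorem~\ref{Thm-RainbowFrame} and the surrounding machinery rely on the existence of such a system rather than on a short proof of its $3$-uncolourability, citing the earlier computation is entirely appropriate here.

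The main obstacle is the lower bound $\chi \geq 4$: showing that a given KTS$(33)$ admits \emph{no} weak $3$-colouring is not something one can do by hand in a few lines, and the paper handles this by exhaustive backtracking search rather than by a clever combinatorial argument. So the honest statement of the proof is essentially a pointer to that computation plus the explicit colouring; the substance of the lemma is the computational result already announced in Section~\ref{smallorders}, and the ``proof'' of Lemma~\ref{4chrom33} is just the observation that this computational result, combined with the equitable $8^39^1$ colouring, yields exactly what is claimed. I would therefore write the proof as: \emph{This follows from the analysis of the thirty KTS$(33)$ of~\cite{TV} described in Section~\ref{smallorders}: each such system fails to be $3$-colourable but admits a weak $4$-colouring of type $8^39^1$, which is equitable; an explicit example of such a system together with its colouring is given in the appendix.}
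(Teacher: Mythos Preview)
Your proposal is correct and matches the paper's approach exactly: the paper does not give a formal proof of this lemma but simply states it as a consequence of the computational analysis in Section~\ref{smallorders}, where the thirty KTS$(33)$ of Tonchev and Vanstone~\cite{TV} were shown by exhaustive search to be non-$3$-colourable while each admitting an equitable $4$-colouring of type $8^39^1$ (with explicit colourings recorded in the appendix). Your identification of the lower bound $\chi\geq 4$ as the non-elementary step requiring the cited computation is also accurate.
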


Combining the results of this section, we obtain the following theorem.

\begin{theorem}
	\label{main4colour}
Let $u\equiv 1$ or  $4\pmod{12}$. Then there exists
 a {\rm KTS}$(32u+1)$ with chromatic number $4$.
 Furthermore there is a
$4$-colouring which is
 equitable.
\end{theorem}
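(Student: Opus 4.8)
The plan is to assemble this theorem from the pieces developed earlier in the section, treating it essentially as a corollary. The target is a KTS$(32u+1)$ with an equitable $4$-colouring, for every $u\equiv 1$ or $4\pmod{12}$. First I would handle the degenerate requirement $u\geq 4$: since $u\equiv 1$ or $4\pmod{12}$, the only value below $4$ is $u=1$, which gives $32u+1=33$, and that case is already covered by Lemma~\ref{4chrom33}. So assume $u\geq 4$.

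Next I would invoke Lemma~\ref{blowitup} with the given $u$ to obtain a $4$-coloured $4$-GDD of type $4^u$ with block size $4$ satisfying conditions (a) and (b): in any block at most two points share a colour, and in any group each colour appears exactly once. Then feed this into Lemma~\ref{blowinup} with $g=4$ (so $8g=32$), producing a $4$-coloured $3$-frame of type $32^u$ in which every group is equitably coloured (indeed, $8$ points of each colour per group). Finally, apply Lemma~\ref{plug4} with this $3$-frame of type $(4g)^u = 32^u$ (here $4g=32$, i.e.\ the "$g$" of Lemma~\ref{plug4} is $8$): the lemma requires a $4$-chromatic KTS$(4g+1)=$ KTS$(33)$ admitting an equitable $4$-colouring, which is exactly Lemma~\ref{4chrom33}. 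The conclusion of Lemma~\ref{plug4} is a $4$-chromatic KTS$(4gu+1)=$ KTS$(32u+1)$ with an equitable $4$-colouring, as desired.

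The one point requiring a sentence of care is the chromatic number being exactly $4$ rather than $3$: Lemmas~\ref{blowinup}, \ref{plug4} only guarantee a $4$-colouring, so I should confirm that the resulting system is not $3$-chromatic. This follows because it contains a copy of a $4$-chromatic KTS$(33)$ as a subsystem (one of the sub-designs placed on $G\cup\{\infty\}$ in the filling-in construction of Lemma~\ref{plug4}), and a $3$-colouring of the whole system would restrict to a $3$-colouring of that subsystem, a contradiction. Alternatively, Lemma~\ref{plug4} as stated already asserts "$4$-chromatic" in its conclusion, so one may simply cite it. I do not anticipate a genuine obstacle here: the theorem is a clean composition of three previously established lemmas plus the base case, and the only bookkeeping is matching the parameter names (the frame type $(4g)^u$ of Lemma~\ref{plug4} corresponds to $g=4$, $8g=32$ in Lemma~\ref{blowinup}).
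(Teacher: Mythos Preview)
Your proposal is correct and follows essentially the same route as the paper: the base case $u=1$ via Lemma~\ref{4chrom33}, then for $u\geq 4$ chain Lemma~\ref{blowitup}, Lemma~\ref{blowinup} with $g=4$, and Lemma~\ref{plug4} with $g=8$, using Lemma~\ref{4chrom33} again as the ingredient KTS$(33)$. Your extra remark about why the chromatic number cannot drop to $3$ (the embedded $4$-chromatic KTS$(33)$ subsystem) is a reasonable aside, though as you note the paper simply relies on the ``$4$-chromatic'' conclusion already built into Lemma~\ref{plug4}.
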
	
	
\begin{proof}
By Lemma~\ref{4chrom33}, the theorem is true for $u=1$. Otherwise $u\geq 4$ and by
 Lemma \ref{blowitup} there is a $4$-coloured
	{\rm GDD}
	$4^u$ with block size $4$ such that: (a) in any block there are at most two points of the same colour; and (b)
	in any group there is exactly one point of each colour.
	In turn, by Lemma \ref{blowinup} with $g=4$, there
	exists a 
	$4$-coloured $3$-frame
	of type $32^u$ such that each group is equitably coloured. Finally, apply  Lemma \ref{plug4} with $g=8$. Note that the $4$-chromatic equitably coloured KTS$(33)$ needed in this construction is given by Lemma~\ref{4chrom33}.
\end{proof}

\section{Kirkman triple systems with an arbitrary chromatic number}

\label{arbitrary}

In this section our main aim is to prove Theorem~\ref{Thm-main2}. That is, we establish that for each integer $\delta \geq 3$,
there exist infinitely many Kirkman triple systems with chromatic number $\delta$.
We will make use of the following results from the literature.
Note that a {\em partial quadruple system} $Q'$ of order $u$
is a set $U$ of $u$ points and a set ${\mathcal B}'$ of blocks of size $4$ from $U$ such that every pair from $U$ is contained in {\em at most} one block.
If $U\subseteq V$ and ${\mathcal B}'\subseteq {\mathcal B}$ for some
quadruple system $Q=(V,{\mathcal B})$, we say that $Q'$ {\em embeds} in $Q$.

\begin{theorem}[\cite{Ganter,Quackenbush1975}]
	Every partial quadruple  system of order $v$ 
	can be embedded in a quadruple system of order $w$ 
	for some $w\geq v$.
\label{quad}
\end{theorem}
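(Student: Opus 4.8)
The plan is to use the classical two-phase strategy for embedding partial designs: first enlarge $Q'$ so that every pair of its original points is covered by a block, and then steer the remaining uncovered pairs into a shape that can be completed in one stroke.

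Write $Q' = (U,\mathcal{B}')$ and let its \emph{leave} be the graph $L$ on $U$ whose edges are exactly the pairs of $U$ that lie in no block of $\mathcal{B}'$. First I would eliminate $L$: for each edge $\{x,y\}\in E(L)$ introduce two fresh points $a_{xy},b_{xy}$ and adjoin the block $\{x,y,a_{xy},b_{xy}\}$. Since $a_{xy}$ and $b_{xy}$ occur in no other block, no pair is ever covered twice, and afterwards every pair inside $U$ lies in exactly one block. This costs $2|E(L)|$ new points and produces a partial quadruple system $Q_1$ whose (still finite) leave has all of its edges meeting the new points. One can carry out this step more frugally, using a proper edge-colouring of $L$ and a few fixed small quadruple systems, but frugality is irrelevant since we only need $w$ to be finite.

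In the second phase I would iterate variants of the same move until the remaining leave is a complete multipartite graph: a point set $T$ partitioned into $s$ groups of size $r$ in which every cross-group pair is uncovered and everything else — all pairs inside a group, and all pairs that meet $T$ in at most one point — is already covered. Choosing $r$ and $s$ so that a $4$-GDD of type $r^s$ exists (possible for a wide range of parameters by the known existence results for uniform $4$-GDDs, which in turn rest on Hanani's work), the blocks of such a GDD cover precisely the outstanding cross-group pairs, and the union of all the blocks constructed so far is a quadruple system $Q(w)$ containing $Q'$. Hanani's theorem on the existence of $Q(v)$ is used here only to guarantee the required GDD (equivalently, a $Q(t)$ of a suitable admissible order) exists.

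The real work — and the place where the arguments of Ganter and Quackenbush are genuinely needed — is the second phase: covering one uncovered pair with fresh points creates new uncovered pairs, so one must show the leave can be driven to complete-multipartite form in finitely many steps while never covering a pair twice. The natural way to organise this is to add points in ``layers'', each layer a copy of the current point set together with auxiliary points, and to track how the leave shrinks from one layer to the next; the divisibility side-conditions that arise at each completion are harmless precisely because $w$ may be taken as large as we please. I expect the bookkeeping that forces this process to terminate to be the main obstacle.
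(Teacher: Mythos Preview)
The paper does not prove this statement; it is quoted from \cite{Ganter,Quackenbush1975} and used as a black box, so there is no in-paper argument to compare your attempt against.

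Your proposal, however, is not a proof: it is an outline in which the essential step is explicitly deferred. Phase~1 is harmless, but after it the leave is larger, not smaller --- each fresh point $a_{xy}$ is uncovered against almost every other point --- and carries no useful structure. Phase~2 then asserts that iterating ``variants of the same move'' drives the leave to complete-multipartite form, yet you supply no monovariant, no termination argument, and no reason the process ever lands on parameters $r^s$ for which a $4$-GDD exists. Repeating the phase-1 trick on the new leave simply produces a still larger leave. Your own closing sentence, ``I expect the bookkeeping that forces this process to terminate to be the main obstacle,'' is exactly right: that bookkeeping \emph{is} the theorem, and it has not been done.

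For orientation, neither cited source proceeds by iteratively whittling down a leave graph. Quackenbush identifies $(v,4,1)$-designs with near-vector spaces over $\mathrm{GF}(3)$ and obtains the embedding as a finite-completion theorem for partial near-vector spaces via universal-algebraic amalgamation; finiteness comes from the algebraic setting, not from combinatorial tracking. Ganter's argument is likewise a controlled amalgamation whose termination is the whole content. If you want a purely combinatorial route along your lines, you need either a genuine decreasing invariant at each step or a one-shot embedding into an explicit family of complete quadruple systems, not an open-ended iteration.
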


\begin{theorem}[\cite{RS,WZ1,WZ2}]
\label{Rees-Stinson}
Let $w<v$ be positive integers with $w, v \equiv 1$ or $4\pmod{12}$.  Any quadruple system of order $w$ can be embedded in a quadruple system of order $v$ if and only if $v \geq 3w+1$.
\end{theorem}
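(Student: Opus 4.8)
The plan is to handle the two directions quite differently: necessity of $v \geq 3w+1$ is a short counting argument, while the converse I would import from the recursive machinery for group divisible designs.

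\textbf{Necessity.} Suppose $Q(w) = (W,\mathcal{A})$ is embedded in $Q(v) = (V,\mathcal{B})$, and fix a point $x \in V \setminus W$. For each $a \in W$, let $B_a \in \mathcal{B}$ be the unique block through the pair $\{x,a\}$. I would first observe that $B_a$ meets $W$ only in $a$: a second point $b \in B_a \cap W$ would place the pair $\{a,b\}$ in $B_a$ and also in the block of $\mathcal{A}$ through $\{a,b\}$, which is distinct from $B_a$ since $x \notin W$, contradicting $\lambda = 1$. Hence $B_a = \{x,a,c_a,d_a\}$ with $c_a,d_a \in V \setminus (W \cup \{x\})$. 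The blocks $B_a$ $(a \in W)$ are pairwise distinct and any two of them intersect only in $x$, so the $2w$ points $c_a, d_a$ are all distinct; since they lie in a set of size $v - w - 1$, we conclude $v \geq 3w+1$.

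\textbf{Sufficiency.} Here I would first reduce the embedding problem to a GDD existence problem: $Q(w)$ embeds in some $Q(v)$ precisely when there is a $\{4\}$-GDD of type $w^1 1^{\,v-w}$ (an incomplete $(v,4,1)$-design with a hole of size $w$). Indeed, filling the group of size $w$ with a copy of $Q(w)$ turns such a GDD into the required $Q(v)$, and conversely deleting the blocks of the sub-$Q(w)$ from $Q(v)$ leaves exactly such a GDD, because every block of $Q(v)$ either lies inside $W$ or meets $W$ in at most one point. The divisibility conditions needed for a $\{4\}$-GDD of type $w^1 1^{\,v-w}$ (namely $3 \mid v-1$, $3 \mid v-w$, and $6 \mid \binom{v}{2}-\binom{w}{2}$) follow from $v,w \equiv 1$ or $4 \pmod{12}$. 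I would then construct this GDD recursively: start from a master $4$-GDD or transversal design of the appropriate shape, inflate it by Wilson's Fundamental Construction using small ingredient $4$-GDDs, and fill in groups using smaller instances of the same embedding problem (induction on $v$), with base ingredients such as $Q(4)$ and $Q(13)$; a bounded reduction can be absorbed via Theorem~\ref{quad}, which however gives no control over the target order on its own. A finite list of small and near-extremal orders would be settled by direct or computer-assisted constructions.

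\textbf{Main obstacle.} The hard part is the sufficiency direction in the regime $v$ close to $3w+1$. There the counting in the necessity argument is tight, so the $\{4\}$-GDD of type $w^1 1^{\,v-w}$ has essentially no slack — every block through a hole-external point is forced to use exactly one point of $W$ — and the usual ``there is room to spare'' recursive arguments fail, so one is pushed into specialized transversal-design-based or ad hoc constructions. Isolating and dispatching the finite set of exceptional pairs $(w,v)$ is where most of the work lies; the congruence bookkeeping modulo $12$ needed to ensure all ingredient designs in the recursion actually exist is routine but must be tracked throughout.
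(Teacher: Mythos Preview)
The paper does not prove this theorem at all: it is quoted as a known result from the cited references \cite{RS,WZ1,WZ2} and used as a black box (together with Theorem~\ref{quad}) to derive Corollary~\ref{QuadEmbed}. So there is no ``paper's own proof'' to compare your proposal against.

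That said, your outline is broadly in line with how the cited papers actually proceed. Your necessity argument is the standard one and is correct as written. Your reduction of sufficiency to the existence of a $\{4\}$-GDD of type $w^1 1^{v-w}$ and the plan to build it via Wilson's Fundamental Construction with filling-in is exactly the strategy of Rees--Stinson and Wei--Zhu; you also correctly identify that the genuine difficulty is the near-extremal range $v$ close to $3w+1$, which in those papers is handled by a substantial case analysis and several direct constructions rather than by a single clean recursion. What you have written is a fair high-level map of that argument, but it is a sketch rather than a proof: the real content --- the specific master GDDs, the ingredient designs, and the disposition of the finite exceptional list --- is precisely what the cited papers supply and what your proposal defers.
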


By combining these two results, we obtain the following corollary.

\begin{corollary}
\label{QuadEmbed}
For any partial quadruple system $Q'(u)$ on $u$ points, there exists an integer $v_0$ such that for all $v \geq v_0$ with $v \equiv 1$ or $4\pmod{12}$, $Q'(u)$ can be embedded in a quadruple system of order $v$.
\end{corollary}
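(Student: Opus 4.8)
The plan is simply to chain the two embedding theorems. First I would apply Theorem~\ref{quad} to the given partial quadruple system $Q'(u) = (U,\mathcal{B}')$, obtaining a genuine quadruple system $Q = (V,\mathcal{B})$ of some order $w \geq u$ into which $Q'(u)$ embeds (so $U \subseteq V$ and $\mathcal{B}' \subseteq \mathcal{B}$). Since $Q$ is an actual quadruple system, Hanani's theorem forces $w \equiv 1$ or $4 \pmod{12}$. We fix one such embedding and hence one such value of $w$, which depends only on $Q'(u)$.

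Next I would set $v_0 = 3w+1$. Given any integer $v \geq v_0$ with $v \equiv 1$ or $4 \pmod{12}$, we have $w < v$ and $v \geq 3w+1$, so the hypotheses of Theorem~\ref{Rees-Stinson} are met with our fixed $w$; that theorem then produces a quadruple system $Q'' = (V'',\mathcal{B}'')$ of order $v$ into which $Q$ embeds, i.e.\ $V \subseteq V''$ and $\mathcal{B} \subseteq \mathcal{B}''$.

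Finally I would invoke transitivity of embeddings, which is immediate from the set-theoretic definition: from $U \subseteq V \subseteq V''$ and $\mathcal{B}' \subseteq \mathcal{B} \subseteq \mathcal{B}''$ it follows at once that $Q'(u)$ embeds in $Q''$. Hence for every $v \geq v_0$ with $v \equiv 1$ or $4 \pmod{12}$ the partial system $Q'(u)$ embeds in a quadruple system of order $v$, which is exactly the claim.

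There is essentially no obstacle here; the only points worth checking are that the congruence requirement $w \equiv 1$ or $4 \pmod{12}$ demanded by Theorem~\ref{Rees-Stinson} is automatic because $w$ is the order of an honest quadruple system, and that the bound $v \geq 3w+1$ is precisely what the choice $v_0 = 3w+1$ secures. I would also note in passing that $v_0$ is not effective as a function of $u$ alone, since Theorem~\ref{quad} gives no explicit control on $w$; but the corollary only asserts the existence of such a $v_0$, so this causes no difficulty.
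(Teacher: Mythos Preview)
Your proposal is correct and follows exactly the approach the paper intends: the paper states only that the corollary is obtained ``by combining these two results,'' and your argument---embed $Q'(u)$ into a quadruple system of order $w$ via Theorem~\ref{quad}, then embed that into any admissible order $v \geq 3w+1$ via Theorem~\ref{Rees-Stinson}---is precisely that combination, with the necessary observations (that $w$ automatically satisfies the congruence condition, and that embedding is transitive) made explicit.
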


We will also need the following lemma.
\begin{lemma}
Let $\delta\geq 4$ and let $c_0,c_1,c_2\in \mathbb{Z}_{\delta}$.
Then there exists a 
$\delta$-coloured (not necessarily $\delta$-chromatic) resolvable $3$-{\rm GDD} of type $4^3$
with groups $G_j= \{i_j\mid i\in \{0,1,2,3\}\}$, $j\in \{0,1,2\}$,
such that vertex $i_j$ receives colour $c_j+i\pmod{\delta}$ for each $i\in \{0,1,2,3\}$ and $j\in \{0,1,2\}$.
Moreover, if it is not the case that $c_0=c_1=c_2$, then $\{0_0,0_1,0_2\}$ is a block.
\label{Lemma-type43}
\end{lemma}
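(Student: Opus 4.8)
The goal is to construct a resolvable $3$-GDD of type $4^3$ on groups $G_0, G_1, G_2$ (with $G_j = \{i_j \mid i \in \{0,1,2,3\}\}$), carrying a prescribed $\delta$-colouring in which $i_j$ gets colour $c_j + i \pmod{\delta}$, and such that $\{0_0,0_1,0_2\}$ is a block whenever $(c_0,c_1,c_2)$ is not constant. A $3$-GDD of type $4^3$ is just a transversal design $\mathrm{TD}(3,4)$, equivalently a single Latin square of order $4$: blocks are $\{a_0, b_1, L(a,b)_2\}$ for a Latin square $L$ on symbol set $\{0,1,2,3\}$. It has $16$ blocks, and it is resolvable iff $L$ has an orthogonal mate (so that the $16$ blocks split into $4$ parallel classes of $3$ blocks each) — every Latin square of order $4$ has an orthogonal mate, so resolvability is automatic once we exhibit a suitable $L$. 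So the real content is choosing $L$ so that the colouring condition is met, i.e. \emph{no block is monochromatic}, and so that $\{0_0,0_1,0_2\}$ is a block when needed.

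First I would handle the monochromatic-block constraint. A block $\{a_0, b_1, L(a,b)_2\}$ is monochromatic iff $c_0 + a \equiv c_1 + b \equiv c_2 + L(a,b) \pmod\delta$. Writing $d_1 = c_1 - c_0$ and $d_2 = c_2 - c_0$ (as elements of $\mathbb{Z}_\delta$), the dangerous pairs $(a,b)$ are those with $b \equiv a - d_1$ and then we must avoid $L(a,b) \equiv a - d_2$; since $a, b \in \{0,1,2,3\}$ and $\delta \geq 4$, for each residue there is at most one such $(a,b)$ pair in range, and for each such pair there is at most one forbidden symbol. So we need a Latin square of order $4$ avoiding at most four specified cell-symbol incidences (one per row, at most one per column), i.e.\ a Latin square avoiding a partial permutation matrix of "forbidden" entries — this is easy: at most $4$ forbidden cells, no two in the same row or column, and avoiding one forbidden symbol per such cell. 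Such Latin squares of order $4$ always exist (for instance one can always complete after forbidding a single symbol in each of at most $4$ cells of a transversal, or simply check the small number of cases). I would also note the colours of $G_j$ are $\{c_j, c_j+1, c_j+2, c_j+3\}$, four \emph{distinct} colours since $\delta \geq 4$, matching requirement (b)-type structure.

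Next I would incorporate the requirement that $\{0_0,0_1,0_2\}$ be a block when $(c_0,c_1,c_2)$ is non-constant: this just says $L(0,0) = 0$, i.e. the forbidden-entry avoidance problem is augmented with one \emph{prescribed} entry $L(0,0)=0$. We must check consistency: the block $\{0_0,0_1,0_2\}$ is monochromatic iff $c_0 = c_1 = c_2$, which is exactly the excluded case, so when $(c_0,c_1,c_2)$ is non-constant this prescribed block is not monochromatic and hence does not conflict with the avoidance constraints. So the task reduces to: find an order-$4$ Latin square $L$ with $L(0,0)=0$ (when needed) that avoids at most $4$ forbidden entries lying on a partial transversal, none of which is the cell $(0,0)$ with symbol $0$ (again consistent because that entry is monochromatic only in the excluded case). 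I would dispatch this by a short explicit case analysis: normalize by the value of $d_1, d_2 \bmod \delta$ restricted to the relevant range, observe there are only a few essentially different patterns of forbidden cells (empty, one cell, a pair, …, up to a full transversal of $4$ cells), and in each case write down (or argue the existence of) a completing Latin square of order $4$ respecting $L(0,0)=0$.

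The main obstacle — and it is mild — is the bookkeeping in the last step: verifying that for \emph{every} choice of $(c_0,c_1,c_2) \in \mathbb{Z}_\delta^3$ the small forbidden pattern can be avoided simultaneously with the prescribed entry. The worst case is when all four "dangerous" pairs $(a,b)$ lie in $\{0,1,2,3\}^2$ and form a transversal, each with a distinct forbidden symbol, and $\delta = 4$ so the forbidden symbols use up all residues; one then needs a Latin square of order $4$ that is a "derangement" relative to a given transversal of forbidden entries \emph{and} fixes $L(0,0)=0$ — since every Latin square of order $4$ has many transversals and the group table of $\mathbb{Z}_4$ plus its translates give enough freedom, this always works, and I would confirm it by direct inspection of the (finitely many, up to the obvious symmetries) configurations rather than a slick argument. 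Resolvability, as noted, comes for free from the existence of an orthogonal mate for any order-$4$ Latin square, so I would close by exhibiting the parallel-class decomposition abstractly or simply citing that $\mathrm{TD}(3,4)$ is resolvable.
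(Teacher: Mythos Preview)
Your Latin-square framework is the right way to encode a $3$-GDD of type $4^3$, and your analysis of the monochromatic constraint (the forbidden cell--symbol incidences lie on a partial transversal, at most one per row, column, and symbol) is correct. The gap is in resolvability: the claim that every Latin square of order $4$ has an orthogonal mate is false. The Cayley table of $\mathbb{Z}_4$, namely $L(a,b)=a+b\pmod 4$, has no transversal at all (for any permutation $\sigma$ one has $\sum_a \bigl(a+\sigma(a)\bigr)\equiv 0\pmod 4$, whereas $0+1+2+3\equiv 2\pmod 4$), hence no orthogonal mate, and the corresponding TD$(3,4)$ is not resolvable. Your remark that ``the group table of $\mathbb{Z}_4$ plus its translates give enough freedom'' therefore points to exactly the wrong family of squares.

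Consequently the case analysis you defer to the end cannot range over all order-$4$ Latin squares; you must restrict to those admitting an orthogonal mate and then re-verify that within this smaller family you can always simultaneously (i) avoid the forbidden entries and (ii) force $L(0,0)=0$ when $(c_0,c_1,c_2)$ is non-constant. That is still a finite check and can be made to work, but it is not the triviality your plan suggests, and you must actually exhibit the parallel classes (or the mate) rather than cite their existence. The paper's proof sidesteps the issue by writing down explicit resolvable designs, with the four parallel classes displayed, for every case with $\delta=4$, and then for $\delta>4$ reducing to those $\delta=4$ solutions via a recolouring and point-relabelling argument rather than choosing a new square.
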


\begin{proof}
Note that without loss of generality, by cycling colours modulo $\delta$ if needed, we may assume $c_0=0$.
We first consider the case $c_0=c_1=c_2=0$.
Consider the following resolvable $3$-GDD of type $4^3$,  obtained by deleting a group from the construction given in Lemma~\ref{blowitup}.  Specifically, the blocks are as follows, where $(x,y,z)$ denotes the block $\{x_0, y_1, z_2\}$:
$$
\begin{array}{l}
\{(0,0,1), (1,1,2), (2,3,3), (3,2,0)\},\quad
\{(0,1,3), (1,0,0), (2,2,1), (3,3,2)\}, \\
\{(0,2,2), (1,3,1), (2,1,0), (3,0,3)\}, \quad
\{(0,3,0), (1,2,3), (2,0,2), (3,1,1)\}.
\end{array}$$
If point $i_j$ has colour $i$, then all the conditions of the lemma are satisfied.

Otherwise assume $c_0=0$ and $(c_1,c_2)\neq (0,0)$.
We may assume that $c_1\leq c_2$; (otherwise swap groups $G_1$ and $G_2$).
We first consider when $\delta=4$.

We present a list of resolvable $3$-GDDs, each of type $4^3$,  exhaustively covering  every possible case.
We use the same notation as above to describe points, namely, $(x,y,z)$ denotes the block $\{x_0, y_1, z_2\}$.
Firstly, the case $(c_1,c_2)\in \{(0,1),(0,2),(0,3),
(1,1),(2,2),(3,3)\}$:
$$
\begin{array}{l}
\{(0,0,0), (1,1,1), (2,2,2), (3,3,3)\},\quad
\{(0,1,2), (1,0,3), (2,3,0), (3,2,1)\}, \\
\{(0,2,3), (1,3,2), (2,0,1), (3,1,0)\}, \quad
\{(0,3,1), (1,2,0), (2,1,3), (3,0,2)\}.
\end{array}$$
Next,
the case $(c_1,c_2)=(1,2)$:
$$
\begin{array}{l}
\{(0,0,0), (1,2,1), (2,3,2), (3,1,3)\},\quad
\{(0,1,2), (1,3,3), (2,2,0), (3,0,1)\}, \\
\{(0,2,3), (1,0,2), (2,1,1), (3,3,0)\}, \quad
\{(0,3,1), (1,1,0), (2,0,3), (3,2,2)\}.
\end{array}$$
Now, the case $(c_1,c_2)=(1,3)$:
$$
\begin{array}{l}
\{(0,0,0), (1,2,2), (2,3,3), (3,1,1)\},\quad
\{(0,2,1), (1,0,3), (2,1,2), (3,3,0)\}, \\
\{(0,1,3), (1,3,1), (2,2,0), (3,0,2)\}, \quad
\{(0,3,2), (1,1,0), (2,0,1), (3,2,3)\}.
\end{array}$$
And finally, a solution for $(c_1,c_2)=(2,3)$ is obtained by swapping the second and third coordinates in each of the triples in the case $(c_1,c_2)=(1,3)$.

Otherwise $c_0=0$, $(c_1,c_2)\neq (0,0)$ and $\delta>4$.
Let $X_0=\{0,1,2,3\}$, $X_1=\{c_1,c_1+1,c_1+2,c_1+3\}$ and  $X_2=\{c_2,c_2+1,c_2+2,c_2+3\}$, where elements in these sets are evaluated in $\mathbb{Z}_{\delta}$.
That is, $X_0$, $X_1$ and $X_2$ are the sets of colours used in groups $G_0$, $G_1$ and $G_2$, respectively.
Considering $X_0$, monochromatic blocks can only occur in colours $0$, $1$, $2$ or $3$.
Thus if either
$X_1$ or $X_2$ is disjoint from $X_0$, we have a solution using any of the above GDDs from the case $\delta=4$ and $(c_1,c_2)\neq (0,0)$.

It remains to consider the case that $|X_1\cap X_0|\geq 1$
and  $|X_2\cap X_0|\geq 1$.
	For $j\in \{1,2\}$,
	if $c_j\in X_0$, set $c_j'=c_j$; otherwise
	$0<\delta-3\leq c_j\leq \delta-1$, and define $c_j'=4-\delta+c_j\in\{1,2,3\}$.
%;
 Since $(c_1',c_2')\neq (0,0)$,
from above there exists a resolvable $3$-GDD of type $4^3$ which satisfies the conditions of the lemma for
$\delta=4$, where $c_1$ and $c_2$ are replaced by $c_1'$ and $c_2'$, respectively.

In the following paragraphs, we recolour (and possibly relabel) points in groups $G_1$ and $G_2$
  to obtain
a 
$\delta$-colourable  resolvable $3$-GDD of type $4^3$ which satisfies the conditions of the lemma.

	For each $j \in \{1,2\}$, we consider three cases.
	If $c_j\in X_0\setminus \{\delta-3,\delta-2,\delta-1\}$, then for each integer $i$ such that $4-c_j\leq i\leq 3$, recolour the point $i_j$ with colour $i+c_j$.
	If $c_j\in  \{\delta-3,\delta-2,\delta-1\}\setminus X_0$, then for each integer $i$ such that $0\leq i\leq \delta-c_i$ recolour the points $i_j$ with colour $c_j+i$.
	Lastly, if
	$c_j\in  \{\delta-3,\delta-2,\delta-1\}\cap X_0$, then
	necessarily
	$(\delta,c_j)\in \{(5,2),(5,3),(6,3)
	\}$, which we consider individually as subcases below.

	If $\delta=5$ and
	$c_j=2$, points
	$0_j$, $1_j$,
	$2_j$ and $3_j$ are coloured with colours $2$, $3$, $0$ and $1$, respectively.
	Recolour vertex $3_j$ with colour $4$ then swap vertices $2_j$ and $3_j$ (in all blocks), keeping the new colours.

	If $\delta=5$ and
	$c_j=3$, points
	$0_j$, $1_j$,
	$2_j$ and $3_j$ are coloured with colours $3$, $0$, $1$ and $2$, respectively.
	Recolour vertex $3_j$ with colour $4$ then replace vertex $1_j$ with $3_j$, vertex $3_j$ with $2_j$ and vertex $2_j$ with $1_j$ (in all blocks), keeping the new colours.

	Finally, if $\delta=6$ and
	$c_j=3$, points
	$0_j$, $1_j$,
	$2_j$ and $3_j$ are coloured with colours $3$, $0$, $1$ and $2$, respectively.
	Recolour vertex $2_j$ with colour $5$, recolour vertex $3_j$ with colour $4_j$, then swap vertex $1_j$ with $3_j$ (in all blocks), keeping the new colours.
\end{proof}

\begin{example}
For example, in the previous theorem,   suppose $\delta=5$,
$c_0=0$, $c_1=2$ and $c_2=4$. Then   $X_1=\{2,3,4,0\}$ and
$X_2=\{4,0,1,2\}$.   In turn, $c_1'=
2$ and $c_2'=3$.

 Then, using
 notation from the theorem, the blocks are as follows:
 $$
 \begin{array}{l}
 \{(0,0,0), (1,3,2), (2,2,3), (3,1,1)\},\quad
 \{(0,1,2), (1,2,0), (2,3,1), (3,0,3)\}, \\
 \{(0,2,1), (1,1,3), (2,0,2), (3,3,0)\}, \quad
 \{(0,3,3), (1,0,1), (2,1,0), (3,2,2)\}.
 \end{array}$$
\end{example}

\begin{lemma}
For each $\delta\geq 4$,
and list of (possibly non-distinct) colours $c_0,c_1,c_2,c_3\in \mathbb{Z}_{\delta}$,
there exists a 
$\delta$-coloured $3$-frame of type $8^4$ with groups
$G_j= \{i_j\mid i\in \{0,1,\ldots,7\}\}$,   $j\in \{0,1,2,3\}$, such that:
\begin{enumerate}
\item[a)] vertex $i_j$ has colour $c_j+i\pmod{\delta}$ for each $j\in \{0,1,2,3\}$;
\item[b)] there exists a block
 $\{0_0,0_1,0_2\}$ in the frame, unless $c_0=c_1=c_2$.
\end{enumerate}
\label{Lemma-type83}
\end{lemma}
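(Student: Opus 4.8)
The plan is to construct the $3$-frame of type $8^4$ by combining the resolvable $3$-GDD of type $4^3$ from Lemma~\ref{Lemma-type43} with a blow-up using $3$-frames of type $2^4$, in the spirit of Lemma~\ref{blowinup} (the Fundamental Frame Construction). However, here we need the group sizes to become $8$ rather than $4$, so the natural approach is to start from a $4$-GDD-like object on four groups; since no $4$-GDD of type $4^4$ with one block is directly available in the colour-controlled form we need, I would instead build the frame of type $8^4$ directly from a suitable ingredient of type $4^4$ combined with frames of type $2^4$. Concretely, I would first exhibit a resolvable $3$-GDD of type $4^4$ (equivalently, a $4$-GDD obtained from the construction of Lemma~\ref{blowitup} in the $u=4$ case, which lists $16$ blocks of size $4$ on four groups of size $4$), with each point $i_j$ coloured $c_j + i \pmod{\delta}$, and verify that the $4$-colouring property "(a) in any block, at most two points share a colour" holds for the relevant colour shifts $c_0,c_1,c_2,c_3$. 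Then, applying a frame construction with $g=1$ (placing a $3$-frame of type $2^4$ on each block, which exists by Theorem~\ref{framebyanyothername}), I obtain a $3$-frame of type $8^4$. The colour-shift structure $c_j + i$ is preserved because each block of size $4$ gets blown up into a $2^4$-frame whose $8$ points inherit the four colours of the block, each appearing twice, and one can arrange the internal labelling of the $2^4$-frame so that point $i_j$ of the $8$-set again carries colour $c_j + i \pmod{\delta}$.

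**The block $\{0_0,0_1,0_2\}$** requires more care. The guarantee in Lemma~\ref{Lemma-type43} is that the resolvable $3$-GDD of type $4^3$ (on groups $G_0, G_1, G_2$) contains the block $\{0_0, 0_1, 0_2\}$ whenever $\neg(c_0 = c_1 = c_2)$. I would leverage this: first restrict attention to the three groups $G_0, G_1, G_2$ and apply Lemma~\ref{Lemma-type43} to get a resolvable $3$-GDD of type $4^3$ containing $\{0_0,0_1,0_2\}$ (when the hypothesis holds), then extend to four groups. The cleanest way is to take the $4$-GDD of type $4^4$ in a form where the pairs within groups $G_0, G_1, G_2$ are covered by triples that include the block $\{0_0, 0_1, 0_2\}$ — that is, choose the $4$-GDD so that one of its resolution classes, when we delete group $G_3$, specialises to the type-$4^3$ GDD of Lemma~\ref{Lemma-type43}. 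Alternatively, and perhaps more robustly, one builds the type-$8^4$ frame and then notes that the subframe of type $2^4$ placed on a block $B$ containing the "zero" points $0_0, 0_1, 0_2, 0_3$ can be chosen (again by the freedom in Theorem~\ref{framebyanyothername}) to contain a triple on $\{0_0, 0_1, 0_2\}$, provided these three points are mutually non-identified, which holds since they lie in distinct groups.

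**The main obstacle** I anticipate is the bookkeeping to ensure simultaneously: (i) the colour function $i_j \mapsto c_j + i \pmod{\delta}$ is exactly respected after blow-up for all four groups, for an arbitrary quadruple $(c_0,c_1,c_2,c_3)$; (ii) no block of size $3$ becomes monochromatic; and (iii) the special block $\{0_0,0_1,0_2\}$ is present when required. For (ii), the danger is a triple whose three points all receive the same residue mod $\delta$; this can only involve points whose colour labels $c_j + i$ coincide, and since within a single $2^4$-subframe the points come from a block of size $4$ with at most two points per colour, any monochromatic triple would force three of the four original block-points to share a colour — so I would reduce (ii) to verifying the "at most two points per colour" condition on the type-$4^4$ ingredient, exactly as in Lemma~\ref{blowinup}, and this in turn follows by an argument paralleling the case analysis of Lemma~\ref{Lemma-type43} (splitting on whether the colour windows $X_j = \{c_j, c_j+1, c_j+2, c_j+3\}$ overlap). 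I expect the proof to be a moderately long but essentially routine case analysis, with the genuine content being the verification that the type-$4^4$ object can be $\delta$-coloured in the prescribed shifted manner while avoiding monochromatic blocks — the rest being standard frame machinery (Theorem~\ref{framebyanyothername} and the Fundamental Frame Construction) plus the inheritance of the colouring and of the distinguished block under blow-up.
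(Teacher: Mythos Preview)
Your construction is inverted relative to the paper's, and the inversion breaks the colouring. The paper starts from a $3$-frame of type $2^4$ containing the block $\{0_0,0_1,0_2\}$, then \emph{inflates each point by weight $4$}, placing on every (inflated) triple the resolvable $3$-GDD of type $4^3$ from Lemma~\ref{Lemma-type43}. The point $x_j$ of the $2^4$-frame becomes $\{(4x+y)_j : y\in\{0,1,2,3\}\}$, and $(4x+y)_j$ receives colour $c_j+4x+y$; this is exactly an instance of Lemma~\ref{Lemma-type43} with shifts $c_j+4x$, so that lemma both kills monochromatic triples and delivers the block $\{0_0,0_1,0_2\}$ on the inflation of the original $\{0_0,0_1,0_2\}$.

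Your plan goes the other way: start with a type-$4^4$ object and blow it up by $2$ using $3$-frames of type $2^4$. The gap is in the sentence ``the $8$ points inherit the four colours of the block, each appearing twice.'' The target colouring requires the eight points of $G_j$ to carry the \emph{eight} residues $c_j,c_j+1,\ldots,c_j+7\pmod{\delta}$; for $\delta>4$ these are not four colours repeated. If you repair this by colouring the two copies of a weight-$2$ point with $c_j+i$ and $c_j+i+4$, then the monochromatic check on each $2^4$-subframe involves the eight values $c_{j}+a,\,c_{j}+a+4$ (over the four points $a$ of the block), and collisions among these are governed neither by Lemma~\ref{blowitup}'s ``at most two per colour'' (which is stated only for the $4$-colouring $i_j\mapsto i$, not the shifted colouring $i_j\mapsto c_j+i$) nor by Lemma~\ref{Lemma-type43} (which concerns three groups, not four). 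So both the existence of the coloured $4^4$ ingredient for arbitrary $(c_0,c_1,c_2,c_3)$ and the absence of monochromatic triples after blow-up would need fresh case analysis that you have not supplied. There is also a confusion between a ``resolvable $3$-GDD of type $4^4$'' and the $4$-GDD of type $4^4$ from Lemma~\ref{blowitup}; these are not equivalent objects. The paper's direction of inflation is chosen precisely so that all of the colour work is already encapsulated in Lemma~\ref{Lemma-type43}.
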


\begin{proof}
By Theorem~\ref{framebyanyothername}, there exists a $3$-frame of type $2^4$ given by $(V,{\mathcal G},{\mathcal B})$. Here \linebreak
 $V=\{x_j\mid x\in \{0,1\}, j\in \{0,1,2,3\}\}$,  ${\mathcal G}=\{\{x_j\mid x\in \{0,1\}\}, j\in\{0,1,2,3\}\}$ and without loss of generality, we assume that $(0_0, 0_1,0_2)\in {\mathcal B}$.

Next, we apply the {``Inflation by TD''} Construction (Construction 3.2 from~\cite{Stinson87}), giving each point in
the above $3$-frame weight $4$ and thus placing the resolvable $3$-GDD of type $4^3$
from Lemma~\ref{Lemma-type43}
on each blown up block, to obtain a $3$-frame of type $8^4$.
 Here for each $x\in\{0,1\}$, and $j\in \{0,1,2,3\}$, $x_j\in V$ is replaced by the set of points $\{(4x+y)_j, y\in \{0,1,2,3\}\}$. In addition, point $(4x+y)_j$, receives colour $c_j+4x+y \pmod \delta$, so that (a) is satisfied.

 	We note that this colouring is consistent with that of each of the
 $3$-GDD of type $4^3$ from Lemma~\ref{Lemma-type43} placed on the blocks, using $c_j+4x\pmod \delta$ in place of $c_j$ in that lemma.
  	Finally, note that since $(0_0, 0_1,0_2)$ was a block in the original frame and the ingredient 3-GDDs came from Lemma~\ref{Lemma-type43}, (b) holds.
\end{proof}

\begin{theorem}
\label{Thm-arbit}
Let $\delta\geq 4$. There exist infinitely many Kirkman triple systems with chromatic number $\delta$.
\end{theorem}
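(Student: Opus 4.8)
The plan is to build, for each $\delta \geq 4$, an infinite family of KTS whose chromatic number is exactly $\delta$, by combining the frame machinery of Lemmas~\ref{Lemma-type43} and~\ref{Lemma-type83} with a partial quadruple system that forces the lower bound $\chi \geq \delta$. First I would fix a ``gadget'' partial quadruple system $Q'(u)$ on $u$ points together with a proper colouring constraint: we want a small partial quadruple system that is itself $\delta$-chromatic (or at least forces at least $\delta$ colours in any weak colouring of the KTS we shall build on top of it), analogous to how the $4$-GDD of type $4^u$ with block size $4$ was used in Section~\ref{thefourcase}. The natural choice is to take a partial quadruple system whose blocks, when ``blown up'' through the $3$-frame of type $8^4$ from Lemma~\ref{Lemma-type83}, produce triples that cannot be $(\delta-1)$-coloured. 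By Corollary~\ref{QuadEmbed}, any such $Q'(u)$ embeds in a genuine quadruple system $Q(v)$ for all sufficiently large admissible $v$, and since there are infinitely many such $v$, we obtain an infinite family.

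The construction itself would mirror the proof of Theorem~\ref{main4colour}. Starting from a quadruple system $Q(v)$ containing the gadget $Q'(u)$, assign to each point $x$ a ``colour offset'' $c(x) \in \mathbb{Z}_\delta$ and replace each point by a group of $8$ points coloured $c(x), c(x)+1, \ldots, c(x)+7 \pmod \delta$; for each block $B = \{w,x,y,z\}$ of $Q(v)$ place the $\delta$-coloured $3$-frame of type $8^4$ from Lemma~\ref{Lemma-type83} on the four corresponding groups, using offsets $c(w), c(x), c(y), c(z)$. The Fundamental Frame Construction guarantees the union is a $3$-frame of type $8^v$, which we then fill in (adding a new point $\infty$ and a $\delta$-chromatic equitably-coloured KTS$(8g+1)$ on each group $\cup\,\{\infty\}$ with $g=1$, i.e.\ a KTS$(9)$, or with a larger $g$ if needed) to obtain a KTS$(8v+1)$ carrying a $\delta$-colouring — this establishes $\chi \leq \delta$. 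For the lower bound, I would choose the gadget $Q'(u)$ and the offset assignment $c$ so that the blocks arising from the $3$-frames on the gadget's blocks already require $\delta$ colours: since each type-$8^4$ frame contains, for a block $\{w,x,y,z\}$, triples covering all colour-pairs among the groups, a careful choice of offsets on the $u$ gadget points makes any weak colouring of the final KTS restrict to at least $\delta$ distinct colours on those $8u$ points.

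The main obstacle will be the lower bound: ensuring $\chi \geq \delta$ rather than merely $\chi \geq$ something smaller. One must produce a configuration of triples (obtained from $3$-frames of type $8^4$ glued along a partial quadruple system) that is provably not $(\delta-1)$-colourable, and show this configuration embeds as a subsystem — or at least as a sub-configuration — of the KTS$(8v+1)$ for infinitely many $v$. The cleanest route is probably to take the gadget so that the $8u$ blown-up points, together with the triples placed on them, contain a copy of a known $\delta$-chromatic Steiner triple system or a known $\delta$-chromatic linear hypergraph; then the restriction of any weak $(\delta-1)$-colouring of the KTS to these points would be a weak $(\delta-1)$-colouring of that $\delta$-chromatic subsystem, a contradiction. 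A secondary technical point is checking that the ``colour offset'' assignment $c$ on the $v$ points of $Q(v)$ — which must agree with the prescribed offsets on the $u$ gadget points and be otherwise arbitrary — can always be extended, which is immediate since $c$ may be chosen freely on $V \setminus U$.

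With $\delta = 3$ already handled by Theorem~\ref{Thm-main1} (indeed every admissible order yields a $3$-chromatic KTS), and $\delta \geq 4$ handled by the argument above, Theorem~\ref{Thm-main2} follows. I would close the proof by remarking that for $\delta \geq 4$ the orders produced are of the form $8v+1$ with $v \equiv 1$ or $4 \pmod{12}$ and $v$ sufficiently large, an infinite set, completing the case analysis.
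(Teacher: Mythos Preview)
Your overall architecture matches the paper's: blow up each point of a quadruple system $Q(w)$ to a group of $8$ points coloured $c,c+1,\dots,c+7\pmod\delta$, place the $\delta$-coloured $3$-frame of type $8^4$ from Lemma~\ref{Lemma-type83} on each block, and fill in with a KTS$(9)$ plus one infinity point to get a $\delta$-coloured KTS$(8w+1)$. You also correctly identify that the hard part is the lower bound and that the ``cleanest route'' is to make the blown-up gadget contain a copy of a known $\delta$-chromatic Steiner triple system as a sub-configuration.

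What is missing is the concrete mechanism for doing this --- you describe it as a goal rather than a construction. The paper's device is precise and simple: start with a $\delta$-chromatic STS $S=(V,\mathcal B)$ (which exists by~\cite{dBPR}), and form the partial quadruple system $Q'(v+b)$ by adjoining one \emph{fresh} point $x_i$ to each block $B_i\in\mathcal B$. Embed this $Q'$ in $Q(w)$ via Corollary~\ref{QuadEmbed}. The point of adding the $x_i$ is that every block of $Q(w)$ that contains a triple $\{x,y,z\}$ of $S$ has its three ``offsets'' $c(x),c(y),c(z)$ not all equal (because $S$ is weakly $\delta$-coloured), so clause~(b) of Lemma~\ref{Lemma-type83} guarantees that the triple $\{0_x,0_y,0_z\}$ is actually a block of the $3$-frame placed on that quadruple. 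Hence the final KTS$(8w+1)$ literally contains $S$ as a subsystem on the points $\{0_x:x\in V\}$, and $\chi\geq\delta$ is immediate. Without this explicit step --- adding a fresh point to each STS triple and then invoking clause~(b) --- you have no way to force any particular triple to appear in the frames, and the lower-bound argument does not go through. (A minor slip: you do not want the fill-in KTS$(9)$ to be ``$\delta$-chromatic''; you only need it to be weakly $\delta$-colourable with the prescribed colours on its nine points, which is easy for $\delta\geq 4$.)
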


\begin{proof}
Recall that, as mentioned in the introduction,
 there are infinitely many Steiner triple systems with chromatic number $\delta$~(see \cite{dBPR}).		
Let $S=(V,{\mathcal B})$ be such a Steiner triple system on $|V|=v$ points with a 
$\delta$-colouring.
Let ${\mathcal B}=\{B_1,B_2,\dots ,B_b\}$,
where $b=|{\mathcal B}|$.
Let $X=\{x_1,x_2,\dots x_b\}$ be a set of  distinct points, where $X$ is disjoint from $V$.

 Let $Q'(v+b)$ be the partial quadruple system of order $v+b$ defined by adding the point $x_i$ to block $B_i$ for each $i\in \{1,2,\ldots,b\}$.
 By Corollary~\ref{QuadEmbed},
there exist infinitely many integers $w$ such that
$Q'(v+b)$ embeds in a quadruple system $Q(w)$ of order $w$.
 Colour the points of $Q(w)\setminus V$ arbitrarily with $\delta$ colours from the set $\{0, \ldots, \delta-1\}$.

 Next, for each point $j$ in $Q(w)$, let $\{i_j \mid
 i \in \{0,1,\ldots,7\}\}$ be a group of points such that if $j$ has colour $c$, then $i_j$ has colour $c+i\pmod{\delta}$.
 For each block $B$ in $Q(w)$, place a
 $3$-frame of type $8^4$ on the
 point set
 $\{i_j\mid j\in B, i\in \{0,1,\ldots,7\}\}$ in such a way that (a) no triple is monochromatic; and (b)
 if $B$ contains an original triple $\{x,y,z\}$ from $S$, the triple $\{0_x,0_y,0_z\}$ is included in the corresponding $3$-frame.
 This is possible by Lemma~\ref{Lemma-type83}.

Next, add an infinity point which can receive any colour. Add a copy of KTS$(9)$ to each group together with the infinity point, so that no block is monochromatic. This is straightforward to do when $\delta\geq 4$.
The resultant 
triple system $S'$ is a KTS$(8w+1)$~\cite[Construction 5.1]{Stinson87}, which is
$\delta$-coloured by construction; moreover it contains $S$ as a subsystem so has chromatic number $\delta$.
\end{proof}

Theorem~\ref{Thm-main2}, that for each $\delta\geq 3$, there are infinitely many integers $v$ such that there exists a {\rm KTS$(v)$} with chromatic number $\delta$, now follows from
		Theorems~\ref{Thm-main1}
 and~\ref{Thm-arbit}.

\section{Kirkman triple systems from quadruple systems}

\label{quadruple}

We first review a standard construction for Kirkman triple systems that makes use of quadruple systems.
Given a quadruple system $Q$ of order $v$
on vertex set  $\{q_0,q_1,q_2,\dots ,q_{v-1}\}$,
we build a Kirkman triple system $K(Q)$ of order $2v+1$ as follows.

The vertex set $V$ of $K(Q)$ is given by:
$$\{\infty\}\cup
\{q_i,q_i'\mid i\in \{0,1,\ldots,v-1\}\}.$$
For each block $\{w,x,y,z\}$ from
$Q$, place
a KTS$(9)$ on
the vertex set
$$\{\infty\}\cup
\{w,x,y,z,w',x',y',z'\}$$
in such a way that
the triples including $\infty$ are
$\{\infty,w,w'\}$,
$\{\infty,x,x'\}$,
$\{\infty,y,y'\}$ and
$\{\infty,z,z'\}$.
Observe that such a KTS$(9)$ placement is not unique, so in turn $K(Q)$ may not be unique.

It is well known that
$K(Q)$ is a Kirkman triple system; see
Theorem~5.1.3 from \cite{LindnerRodger}.
Building on this construction, we obtain some results which may be useful in future work to construct Kirkman triple systems with fixed chromatic number.

\begin{theorem}\label{ThmQKQcolouring}
	If $Q$ is a quadruple system of order $v$ and if there is a $K(Q)$ which is 
	$\delta$-colourable,
 then $Q$ has a 
$\delta$-colouring.	
\end{theorem}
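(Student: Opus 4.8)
The plan is to transfer a $\delta$-colouring of $K(Q)$ back to $Q$ by restricting to the ``base'' copies of the points of $Q$. Recall that $V(K(Q)) = \{\infty\} \cup \{q_i, q_i' \mid i \in \{0,\ldots,v-1\}\}$, and that for each block $\{w,x,y,z\}$ of $Q$ there is a KTS$(9)$ on $\{\infty\} \cup \{w,x,y,z,w',x',y',z'\}$ in which $\{\infty,t,t'\}$ is a triple for each $t \in \{w,x,y,z\}$. Given a $\delta$-colouring $\psi$ of $K(Q)$, I would define a colouring $\phi$ of $Q$ by setting $\phi(q_i) = \psi(q_i)$ for each $i$ — that is, simply forget the primed points and $\infty$.

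The key step is to verify that $\phi$ is a weak colouring of $Q$, i.e.\ that no block of $Q$ is monochromatic under $\phi$. So suppose toward a contradiction that $\{w,x,y,z\}$ is a block of $Q$ with $\psi(w) = \psi(x) = \psi(y) = \psi(z) = c$. Consider the KTS$(9)$ that $K(Q)$ places on $\{\infty\} \cup \{w,x,y,z,w',x',y',z'\}$. Since $\psi$ restricted to these $9$ points is a weak colouring of this KTS$(9)$, and since $\{\infty,t,t'\}$ is a triple for each $t$, the colour $\psi(t')$ is forced to differ from $c$ whenever $\psi(\infty) = c$; more usefully, I would examine the four triples $\{\infty, t, t'\}$ together with the triple structure on the eight non-infinity points. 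The crucial observation is that in any KTS$(9)$, once four of the nine points form the ``partner set'' $\{w,x,y,z\}$ that is paired with the other four via a common point $\infty$, the remaining structure on $\{w,x,y,z,w',x',y',z'\}$ contains triples entirely within $\{w,x,y,z,w',x',y',z'\}$; in fact in a KTS$(9)$ the parallel class through $\{\infty,w,w'\}$ uses up $\infty$, and the other three parallel classes partition the eight points $\{w,x,y,z,w',x',y',z'\}$ into triples. I would argue that the point set $\{w,x,y,z\}$ cannot be a union of parts of such triples together with being monochromatic without forcing a monochromatic triple, because $4$ is not a multiple of $3$: if $w,x,y,z$ all have colour $c$, then in each of the three parallel classes on the eight non-infinity points, the triples covering $w,x,y,z$ must reach into $\{w',x',y',z'\}$, and a careful count (using that each of $w,x,y,z$ lies in a triple with at least one other of them, impossible to avoid when four points are spread over triples of size three inside an eight-point set) yields a triple with $\geq 2$ points from $\{w,x,y,z\}$; pushing the argument through the unique KTS$(9)$ structure shows some triple is entirely $c$-coloured, contradicting that $\psi$ is weak. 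Hence $\{w,x,y,z\}$ is not monochromatic under $\psi$, so $\phi$ is weak on $Q$.

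The main obstacle I anticipate is making the combinatorial step of the previous paragraph clean rather than a brute case check: I want a slick reason why four monochromatic points $w,x,y,z$ sitting in a KTS$(9)$ (with the specific property that they are the non-primed halves of the $\infty$-triples) force a monochromatic triple. The cleanest route is probably to use that the KTS$(9)$ is unique up to isomorphism (as stated in Section~\ref{smallorders}), so after fixing the roles of $\infty$ and of the pairing $t \leftrightarrow t'$, there are only finitely many configurations to inspect, and one checks directly that the set $\{w,x,y,z\}$ always contains a triple of $K(Q)$ — wait, it need not literally be a triple of the KTS$(9)$, but the point is that some triple of the KTS$(9)$ lies in $\{w,x,y,z,w',x',y',z'\}$ with all three of its points among $\{w,x,y,z\}$ is false in general; rather the right statement is that some triple has all three points coloured $c$. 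Actually the simplest correct formulation: since $\psi$ is a weak $\delta$-colouring of the KTS$(9)$, its restriction to that KTS$(9)$ has no monochromatic triple; if $w,x,y,z$ were all colour $c$ then, because the three parallel classes on $\{w,x,y,z,w',x',y',z'\}$ each partition these eight points into triples and $4 > 8/3$ forces by pigeonhole a triple containing two of $\{w,x,y,z\}$, and iterating over all three parallel classes while tracking the colours of $w',x',y',z'$ one is driven to a monochromatic triple. I would present this as: assume not; then restrict to one block's KTS$(9)$; use uniqueness of KTS$(9)$ and a direct check that no weak colouring makes $\{w,x,y,z\}$ monochromatic; conclude. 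This is the step to get right; everything else is immediate.
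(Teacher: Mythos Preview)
Your overall approach---restrict the $\delta$-colouring of $K(Q)$ to the unprimed points $\{q_0,\ldots,q_{v-1}\}$ and argue that no block $\{w,x,y,z\}$ of $Q$ becomes monochromatic---is exactly what the paper does. The paper dispatches the key step in one word (``observe''), so your instinct that this is where the content lies is right.

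However, your attempts to justify that step contain a concrete error. You write that ``the other three parallel classes partition the eight points $\{w,x,y,z,w',x',y',z'\}$ into triples''. This is false: eight is not a multiple of three. In a KTS$(9)$ every parallel class contains exactly one triple through $\infty$, namely some $\{\infty,t,t'\}$; the remaining two triples of that class cover the six points other than $\infty,t,t'$. So there is no parallel class that lives entirely on the eight non-$\infty$ points, and the pigeonhole argument you sketch from this does not get off the ground. Your subsequent hedging (``wait, it need not literally be a triple\ldots'') shows you sensed the problem.

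Your final fallback---use uniqueness of the KTS$(9)$ and check directly---is correct and suffices. But there is a clean conceptual reason you might prefer. The KTS$(9)$ is the affine plane AG$(2,3)$, in which any five points contain a line (the maximum arc has size four). Apply this to $\{\infty,w,x,y,z\}$: it contains a line $\ell$. If $\infty\in\ell$ then $\ell=\{\infty,t,t'\}$ for some $t\in\{w,x,y,z\}$, but $t'\notin\{\infty,w,x,y,z\}$ since $\{w,x,y,z\}$ meets each $\infty$-line in exactly one point; contradiction. Hence $\ell\subseteq\{w,x,y,z\}$. So $\{w,x,y,z\}$ always contains a triple of $K(Q)$, and therefore cannot be monochromatic under a weak colouring. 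With this one-line replacement for your muddled paragraph, your proof is complete and matches the paper's.
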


\begin{proof}
	Let $A=\{q_0,q_1,\dots ,q_{v-1}\}$ and $B=\{q_0',q_1',\dots ,q_{v-1}'\}$.
	From the definition of $K(Q)$, we may assume that the  vertices of $K(Q)$ are labelled with
	$\{\infty\}\cup A \cup B$
	in such a way that for each
	$\{w,x,y,z\}\subset A$
	that is a block of $Q$,
	there is a KTS$(9)$ on the vertex set:
	$$\{\infty,w,x,y,z,w',x',y',z'\}.$$
	Moreover, $\{\infty,q_i,q_i'\}$ is a triple for each $0\leq i \leq v-1$.
	
	Fix a 
	$\delta$-colouring of the points of $K(Q)$.
	Observe that in this colouring, when restricted to the point set $A$ of $Q$, the block $\{w,x,y,z\}$ is not monochromatic.
	We therefore conclude that the 
	$\delta$-colouring yields
	a $\delta$-colouring of $Q$ such that no block is monochromatic.
\end{proof}

\begin{corollary} \label{Cor:KQlowerbound}
	If $Q$ is a quadruple system, then 
	for any $K(Q)$,
	$\chi(K(Q))\geq \chi(Q)$.
\end{corollary}

\begin{theorem} \label{Thm2QKQcolouring}
	Let $\delta \geq 2$.
	If 
	$Q$ is
	 a 
	$\delta$-colourable quadruple system 
 	of order $v$, then there 
	is a 
	$K(Q)$ which is a  
	$2\delta$-colourable Kirkman triple system of order $2v+1$.
\end{theorem}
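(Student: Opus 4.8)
Let $\delta \geq 2$. If $Q$ is a $\delta$-colourable quadruple system of order $v$, then there is a $K(Q)$ which is a $2\delta$-colourable Kirkman triple system of order $2v+1$.

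The plan is to start from a weak $\delta$-colouring $\psi\colon V(Q)\to\{0,1,\dots,\delta-1\}$ of $Q$ and to build a particular $K(Q)$ together with a colouring $\Phi$ of its $2v+1$ points using the $2\delta$ colours $\{0,1,\dots,2\delta-1\}$: set $\Phi(q_i)=\psi(q_i)$, $\Phi(q_i')=\psi(q_i)+\delta$, and $\Phi(\infty)=0$ (any fixed colour works for $\infty$). By the standard construction recalled just above (see Theorem~5.1.3 of~\cite{LindnerRodger}), $K(Q)$ is a Kirkman triple system of order $2v+1$ no matter which admissible copies of KTS$(9)$ are used as ingredients, so the only thing to arrange is that $\Phi$ has no monochromatic block; and the only freedom available for this is the choice, for each block $\{w,x,y,z\}$ of $Q$, of which KTS$(9)$ is placed on $\{\infty,w,x,y,z,w',x',y',z'\}$ subject to the four triples $\{\infty,w,w'\},\dots,\{\infty,z,z'\}$ being blocks.

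First I would dispatch the easy cases. A triple $\{\infty,w,w'\}$ is never monochromatic since $\Phi(w)=\psi(w)\neq\psi(w)+\delta=\Phi(w')$, and any non-$\infty$ triple that contains both a primed and an unprimed point is never monochromatic, since unprimed points carry colours in $\{0,\dots,\delta-1\}$ while primed points carry colours in $\{\delta,\dots,2\delta-1\}$. Hence a monochromatic block could only be a non-$\infty$ triple lying wholly in $\{w,x,y,z\}$ or wholly in $\{w',x',y',z'\}$ for some block $\{w,x,y,z\}$ of $Q$. Moreover, the bijection swapping each $q_i$ with $q_i'$ and fixing $\infty$ is an automorphism of any KTS$(9)$ carrying the prescribed $\infty$-triples --- in the model of KTS$(9)$ as the affine plane AG$(2,3)$ with $\infty$ the origin it is just the map $v\mapsto-v$ --- so $\{w,x,y\}$ is a block exactly when $\{w',x',y'\}$ is. It therefore suffices to guarantee, block by block, that the chosen KTS$(9)$ has no all-unprimed block that is $\Phi$-monochromatic.

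Since $\psi$ is a weak colouring, $w,x,y,z$ are not all the same colour. If no colour occurs three times among them, then no $3$-subset of $\{w,x,y,z\}$ is monochromatic and any KTS$(9)$ placement works. Otherwise exactly three of them, say $w,x,y$, share a colour $a$ while the fourth, $z$, has a colour $b\neq a$; then $\{w,x,y\}$ is the unique monochromatic $3$-subset of $\{w,x,y,z\}$, and what is needed is a KTS$(9)$ placement in which $\{w,x,y\}$ is not a block. I expect this to be the main obstacle, because one cannot avoid all-unprimed blocks altogether: working in AG$(2,3)$, any admissible placement sends $\{w,x,y,z\}$ to a $4$-set meeting the four lines through the origin in one point each, and a short finite check over these $4$-sets shows that every one of them contains all three points of some line, so an all-unprimed block is always present. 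The key observation is that one can nonetheless control \emph{which} $3$-subset of $\{w,x,y,z\}$ that forced block is; the same finite check exhibits, for any designated element $z$, a placement whose only all-unprimed block is $\{w,x,z\}$, and this triple is not monochromatic because it contains $z$ (and, by the symmetry of the previous paragraph, the corresponding all-primed block is likewise not monochromatic).

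It then remains only to note that these per-block choices may be made independently: two blocks of $Q$ meet in at most one point, so the associated $9$-sets share only $\infty$ together with one prescribed triple $\{\infty,q_i,q_i'\}$, which is forced to be a block in both copies. Making the choices as above yields a $K(Q)$, necessarily a KTS$(2v+1)$, on which $\Phi$ is a weak $2\delta$-colouring, so $\chi(K(Q))\le2\delta$, which is the assertion. If a fully self-contained presentation is preferred, one could instead simply tabulate one explicit KTS$(9)$ on nine labelled points with the prescribed $\infty$-triples in which a prescribed triple of the four unprimed points is not a block, and check directly that $\Phi$ is proper in every case.
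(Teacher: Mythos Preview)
Your proof is correct and follows essentially the same approach as the paper's: colour $q_i'$ with $\psi(q_i)+\delta$, give $\infty$ an arbitrary colour, observe that only all-unprimed or all-primed triples could be monochromatic, and then choose each ingredient KTS$(9)$ so that its unique all-unprimed triple is a $3$-subset of $\{w,x,y,z\}$ that is not monochromatic under $\psi$. The paper simply writes down one explicit KTS$(9)$ in which the all-unprimed block is $\{x,y,z\}$ (after relabelling so that $x,y,z$ are not all the same colour), whereas you reach the same conclusion via the AG$(2,3)$ model and a finite check; your extra observations about the $v\mapsto -v$ automorphism and the forced all-unprimed line are correct and pleasant, but the two arguments are the same in substance.
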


\begin{proof}
	Let $Q$ be a 
	$\delta$-colourable quadruple system of order $v$.
	Let $A=\{q_0,q_1,\dots ,q_{v-1}\}$ and
	$B=\{q_0',q_1',\dots ,q_{v-1}'\}$.
	Fix a 
	$\delta$-colouring of $Q$ on the vertex set $A$ using the colour set $\{1,2,\dots ,\delta\}$.
	
	Consider a Kirkman triple system $K(Q)$ based on $Q$ as described above.
	By the definition of $K(Q)$, 
	 $\{\infty,q_i,q_i'\}$ is a triple for each $0\leq i \leq v-1$, and
		for each $\{w,x,y,z\}\subset A$
	that is a block of $Q$,
	 $K(Q)$ contains the blocks of a KTS$(9)$ on the vertex set
	$$\{\infty,w,x,y,z,w',x',y',z'\}.$$
	Moreover, for each block $\{w,x,y,z\}$, since the block is not monochromatic then we may select $x$, $y$ and $z$ to be points that do not all have the same colour,
	and include
	 the following triples in $K(Q)$:
	 $$\begin{array}{cccc}
	 	\{x,y,z\}, & \{w,y,z'\}, & \{x',w,z\}, & \{x,w,y'\}, \\
	 	\{x',y',z'\}, & \{w',y',z\}, & \{x,w',z'\}, & \{x',w',y\}. \\
	 \end{array}$$
	
	We now colour the points of $B\cup \{\infty\}$ using the extended colour set
	$\{1,2,\dots ,2\delta\}$.
	Whenever vertex $w\in A$ is coloured with $j$, let $w'\in B$ be coloured with $\delta+j$.
	Finally, colour $\infty$ with colour $1$.
	Hence, for each block $\{w,x,y,z\}$ in $Q$, since the points in the set $\{x,y,z\}$ are mapped to at least two different colours,
	the points in the set $\{x',y',z'\}$ are also mapped to at least two different colours.
	
	Observe that for each of the triples
	$$\{\infty,w,w'\},
	\{\infty,x,x'\},
	\{\infty,y,y'\},
	\{\infty,z,z'\},
	\{x,y,z\},
	\{x',y',z'\}$$
	the points are mapped to at least two different colours.
	Each of the remaining six triples of the KTS$(9)$ arising from $\{w,x,y,z\}$ intersects both $A$ and $B$ non-trivially,
	so again receives at least two distinct colours.
	
	In conclusion, the $K(Q)$ constructed is 
	$2\delta$-colourable.
\end{proof}

We note, however, that if $Q$ is $\delta$-chromatic, then $K(Q)$ need not be $2\delta$-chromatic, as the following example demonstrates.
\begin{example} \label{Q13Example}
Consider the quadruple system $Q(13)$ with the following blocks:
\[
\begin{array}{lllllll}
\{0,1,3,9\}, & \{1,2,4,10\}, & \{2,3,5,11\}, & \{3,4,6,12\}, & \{4,5,7,0\}, \\
 \{5,6,8,1\}, &
\{6,7,9,2\}, &
 \{7,8,10,3\}, & \{8,9,11,4\}, & \{9,10,12,5\}, \\ \{10,11,0,6\},
& \{11,12,1,7\}, & \{12,0,2,8\}.
\end{array}
\]
Then $Q(13)$ is $2$-chromatic, for example using the colour class $\{0,1,2,3,4,5,6\}$ and its complement.
Next, we construct a particular $K(Q)$; in an abuse of bracket notation, we assume that the elements of the sets are each ordered as written, 
	replacing each $\{w,x,y,z\}$ 
with 
$$
\begin{array}{cccc}
	\{\infty,w,w'\}, & \{\infty,x,x'\}, & \{\infty,y,y'\}, & \{\infty,z,z'\}, \\
	\{x,y,z\}, & \{w,y,z'\}, & \{x',w,z\}, & \{x,w,y'\}, \\
	\{x',y',z'\}, & \{w',y',z\}, & \{x,w',z'\}, & \{x',w',y\}. \\
\end{array}
$$
(So, for example, for the block $\{12,0,2,8\}$ 
we assume that $w=12$, $x=0$, $y=2$ and $z=8$.)
However, 
this
$K(Q)$ is $3$-chromatic; one $3$-colouring has colour classes:
\[
\{0,1,2,3,4,5,6,0',2'\}, \quad  \{7,8,10,12,6',8',9',11',12'\}, \quad \{9,11,1',3',4',5',7',10',\infty\}.
\]
\end{example}

More generally, we give some conditions on a $\delta$-colourable quadruple system ($\delta \geq 3$) that 
yield a
  Kirkman triple system $K(Q)$ 
which
  is $(\delta+1)$-colourable.

\begin{theorem}\label{Thm:delta+1}
	Suppose there exists a quadruple system $Q$ of order $v$ and a $\delta$-colouring of its points such that each block receives at least $3$ distinct colours.
	Then there exists a Kirkman triple system 
	$K(Q)$ of order $2v+1$ that is 
	$(\delta+1)$-colourable.
\end{theorem}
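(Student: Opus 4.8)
The strategy is to reuse the construction of $K(Q)$ from Theorem~\ref{Thm2QKQcolouring} but to be far more economical with the colours on the ``primed'' copy $B$ and on $\infty$. Recall the construction: for each block $\{w,x,y,z\}$ of $Q$ we place a KTS$(9)$ on $\{\infty,w,x,y,z,w',x',y',z'\}$ in which the triples through $\infty$ are $\{\infty,w,w'\},\{\infty,x,x'\},\{\infty,y,y'\},\{\infty,z,z'\}$, and the remaining eight triples are the two parallel classes
$$
\begin{array}{cccc}
	\{x,y,z\}, & \{w,y,z'\}, & \{x',w,z\}, & \{x,w,y'\}, \\
	\{x',y',z'\}, & \{w',y',z\}, & \{x,w',z'\}, & \{x',w',y\}. \\
\end{array}
$$
Start from the given $\delta$-colouring $\phi$ of $Q$ using colours $\{1,\dots,\delta\}$, keep $\phi$ on $A=\{q_0,\dots,q_{v-1}\}$, introduce one brand-new colour $0$, colour $\infty$ with colour $0$, and colour every primed point $q_i'$ with colour $0$ as well. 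This is a $(\delta+1)$-colouring; the task is to verify no triple is monochromatic.

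\textbf{Key steps.}
First, the triples $\{\infty,q_i,q_i'\}$: these have $\infty$ and $q_i'$ of colour $0$ but $q_i$ of colour $\phi(q_i)\neq 0$, so they are fine. Second, the triple $\{x',y',z'\}$ is all colour $0$ --- this is monochromatic, so the naive colouring fails as stated and the real work is in choosing, for each block, \emph{which} KTS$(9)$ to place and how to orient the roles of $w,x,y,z$. The hypothesis that every block of $Q$ receives at least $3$ distinct colours under $\phi$ is exactly what lets us fix this: for each block $\{w,x,y,z\}$ we can relabel so that $x,y,z$ receive three \emph{pairwise distinct} colours (choose three points of distinct colours to be $x,y,z$, and let $w$ be the fourth). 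With that relabelling, in the ``replacement'' list above I would instead un-prime exactly one point of each triple in the second parallel class so that each such triple contains one primed point (colour $0$) and two unprimed points of distinct nonzero colours --- for instance keep the first parallel class $\{x,y,z\},\{w,y,z'\},\{x',w,z\},\{x,w,y'\}$ as is (each of these already has at least two nonzero colours among $\{x,y,z\}$ or mixes $A$ and $B$), and replace the degenerate class $\{x',y',z'\},\{w',y',z\},\{x,w',z'\},\{x',w',y\}$ by a differently-built parallel class of the KTS$(9)$ on the same nine points in which the all-primed triple is broken up. Concretely, since the KTS$(9)$ on $\{\infty,w,x,y,z,w',x',y',z'\}$ is unique up to isomorphism and has four parallel classes through each of its non-$\infty$ triples, one can pick the copy so that the triple $\{x,y,z\}$ (all distinct nonzero colours) plays the role of the ``special'' block and the complementary structure never produces an all-primed or all-$w$-coloured triple; I would record the explicit block list of such a copy and check each of the twelve triples.

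\textbf{Main obstacle.}
The genuine difficulty is the bookkeeping for the eight non-$\infty$ triples of each KTS$(9)$: I must exhibit an explicit placement of the KTS$(9)$ --- compatible with the forced triples through $\infty$ --- in which \emph{no} triple is monochromatic under the colouring ``$\phi$ on $A$, colour $0$ on $B\cup\{\infty\}$''. A monochromatic triple could only be all-colour-$0$, hence entirely inside $B$, i.e.\ of the form $\{a',b',c'\}$; so it suffices to find a KTS$(9)$ on $\{\infty,w,x,y,z,w',x',y',z'\}$, with the prescribed four triples through $\infty$, containing \emph{no} triple lying wholly inside $\{w',x',y',z'\}$. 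Since $\{w',x',y',z'\}$ has only $\binom{4}{3}=4$ triples and the KTS$(9)$ has $12$ triples in total, an averaging/parity argument (or direct inspection of the unique KTS$(9)$) shows such a placement exists --- indeed the three non-$\infty$ parallel classes partition the $\binom{9}{2}-8=28$ remaining pairs, and one checks that at most three of the four triples of $\{w',x',y',z'\}$ can appear, after which a small case analysis picks an orientation avoiding all of them. Having fixed that placement for every block of $Q$ simultaneously, the resulting $K(Q)$ is a Kirkman triple system of order $2v+1$ with a proper weak $(\delta+1)$-colouring, proving the theorem.
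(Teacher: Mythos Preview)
Your approach has a genuine and fatal gap. You propose to colour every primed point (and $\infty$) with a single new colour $0$, and then to choose, for each block of $Q$, a placement of the KTS$(9)$ on $\{\infty,w,x,y,z,w',x',y',z'\}$ (with the four forced triples $\{\infty,a,a'\}$) that contains \emph{no} triple lying inside $\{w',x',y',z'\}$. No such placement exists. Identify the (unique) STS$(9)$ with $\mathrm{AG}(2,3)$ and send $\infty$ to the origin; then $\{w',x',y',z'\}$ becomes a transversal of the four lines through the origin (one nonzero point from each). A direct check of the $2^4=16$ such transversals shows that each one contains an affine line, i.e.\ a triple of the STS$(9)$. (Concretely: in the very block list you quote, $\{x',y',z'\}$ is already a triple, and every other completion of the four $\infty$-triples behaves the same way.) So under your colouring there is always a monochromatic (all-$0$) triple, regardless of how the KTS$(9)$ is oriented. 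The ``averaging/parity'' and ``small case analysis'' you invoke do not rescue this; in fact exactly one of the four $3$-subsets of $\{w',x',y',z'\}$ is always a block.

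The paper avoids this obstruction by colouring the primed copy differently: each $q_i'$ receives the \emph{same} colour as $q_i$, and only $\infty$ gets the new colour $\delta+1$. Having relabelled each block so that $x,y,z$ carry three pairwise distinct colours (possible by hypothesis), the unavoidable triple $\{x',y',z'\}$ then inherits three distinct colours, and every other non-$\infty$ triple in the list contains two points whose underlying $Q$-labels are two distinct members of $\{x,y,z\}$, hence two distinct colours. That is the idea you are missing: do not try to dodge the all-primed triple, instead make it harmless by mirroring the colouring of $A$ onto $B$.
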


\begin{proof}
	Let $Q$ be such
	 a quadruple system
	of order $v$.
	Let $A=\{q_0,q_1,\dots ,q_{v-1}\}$ and $B=\{q_0',q_1',\dots ,q_{v-1}'\}$.
	Fix a colouring of $Q$ on the point set $A$, using the colour set $\{1,2,\dots ,\delta\}$.

	For each block $\{w,x,y,z\}$ of $Q$, assuming without loss of generality that $x$, $y$ and $z$ have different colours, 
	add the following triples to form a KTS$(9)$: 
$$
\begin{array}{cccc}
	\{\infty,w,w'\}, & \{\infty,x,x'\}, & \{\infty,y,y'\}, & \{\infty,z,z'\}, \\
	\{x,y,z\}, & \{w,y,z'\}, & \{x',w,z\}, & \{x,w,y'\}, \\
	\{x',y',z'\}, & \{w',y',z\}, & \{x,w',z'\}, & \{x',w',y\}. \\
\end{array}
$$

	Now we label the points of $B \cup \{\infty\}$ with the colour set $\{1,2,\dots ,\delta,\delta+1\}$.
	Label $\infty$ with colour $\delta+1$.
	Next, whenever vertex $w\in A$ is coloured with $j$, let $w'\in B$ be also coloured with $j$.
	It is now easily observed that each of the triples of the KTS$(9)$ arising from each block $\{w,x,y,z\}$ has at least two colours.
\end{proof}

We finish this section by using known existence results for quadruple systems along with Theorem~\ref{Thm2QKQcolouring} to construct Kirkman triple systems with small chromatic number.

\begin{theorem} {\rm \cite{FGLR2002,HLP1990}}
	There is a 
	$2$-colourable quadruple system of order $3n+1$  if and only if
	$n\equiv 0$ or $1\pmod{4}$.
\end{theorem}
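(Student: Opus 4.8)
The plan is to prove the two implications separately, noting first that the parameterisation $v = 3n+1$ singles out exactly the admissible orders for quadruple systems: one checks that $3n+1 \equiv 1 \pmod{12}$ precisely when $n \equiv 0 \pmod{4}$, and $3n+1 \equiv 4 \pmod{12}$ precisely when $n \equiv 1 \pmod{4}$. With this in hand the necessity (``only if'') direction is immediate, since a $2$-colourable quadruple system of order $3n+1$ is in particular a quadruple system of order $3n+1$, whose mere existence forces $3n+1 \equiv 1$ or $4 \pmod{12}$ by \cite{Hanani1961}, equivalently $n \equiv 0$ or $1 \pmod{4}$. All of the content therefore lies in the sufficiency direction: for every $n \equiv 0$ or $1 \pmod{4}$ I must exhibit a quadruple system of order $3n+1$ together with a bipartition of its points into two block-free colour classes, so that no block lies entirely in either class.

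For sufficiency I would argue by base cases plus recursion, in the style of the standard existence proof for $S(2,4,v)$ designs. A small supply of base cases can be produced directly: the cyclic $Q(13)$ of Example~\ref{Q13Example}, $2$-coloured by the interval bipartition $\{0,\dots,6\}$ against its complement, is the prototype (each translate of the base block fails to fit in an arc of at most seven consecutive points, so neither arc contains a block), and a handful of further small orders, one in each residue class needed to seed the recursion, can be handled by explicit or computer-assisted bipartitions. The recursive engine would be a family of inflation constructions, such as the map $v \mapsto 3v+1$ and Wilson-type products built from group divisible designs. In the $v \mapsto 3v+1$ version one replaces each point $p$ of a $2$-colourable master system by a fibre $\{(p,1),(p,2),(p,3)\}$, adjoins an infinity point $\infty$, covers the within-fibre and infinity pairs by the blocks $\{\infty,(p,1),(p,2),(p,3)\}$, and fills each master block $\{p,q,r,s\}$ with a $4$-GDD of type $3^4$ (a coloured $\mathrm{TD}(4,3)$) on its four fibres to cover the cross-fibre pairs. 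Combining these recursions with the base cases to reach every $n \equiv 0$ or $1 \pmod{4}$ is then routine modular bookkeeping.

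The crux, and the step I expect to be the main obstacle, is threading the $2$-colouring through the inflation so that both colour classes remain block-free. The blocks through $\infty$ are controlled simply by splitting each fibre to contain both colours. The difficulty is the cross-blocks coming from the $4$-GDD on the four fibres of a master block $\{p,q,r,s\}$: such a block selects one point from each fibre, and a careless fibre colouring can allow it to pick four like-coloured points even though the master block is non-monochromatic, creating a monochromatic block. I would resolve this exactly as the paper handles its own coloured ingredients: for each colour pattern that the master bipartition can induce on $\{p,q,r,s\}$, and master $2$-colourability guarantees this pattern is never constant, I would supply an explicitly $2$-coloured $4$-GDD of type $3^4$ carrying the prescribed colours on its four groups and containing no monochromatic block, coordinating the fibre-splits with the Latin-square structure of the $\mathrm{TD}(4,3)$, in the same spirit as the coloured $3$-GDDs of type $4^3$ constructed in Lemma~\ref{Lemma-type43}. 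Establishing that such a coloured ingredient exists for every admissible colour pattern is the heart of the proof; once it is in place, the remaining checks, that the within-fibre, infinity, and cross-blocks are all bichromatic and that pair-coverage is exact, are direct.
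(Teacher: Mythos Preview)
This theorem is quoted from \cite{FGLR2002,HLP1990} and the paper gives no proof of it; there is nothing on the paper's side to compare your proposal against.  Your necessity argument is correct and complete.

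For sufficiency your sketch has the right flavour but two genuine gaps.  First, the single recursion $v\mapsto 3v+1$ reaches only the sparse set $v_k=3^k v_0+(3^k-1)/2$ from any seed $v_0$, so ``routine modular bookkeeping'' will not cover all admissible orders; the papers you are citing rely on a broader toolkit of PBD- and GDD-based product constructions together with a substantial list of directly verified small cases, and you have not indicated which products you would use or why they suffice to fill the spectrum.  Second, your treatment of the coloured ingredient is internally inconsistent: you first say every fibre will be split to contain both colours (forced by the blocks through $\infty$), but then speak of ``the colour pattern the master bipartition induces on $\{p,q,r,s\}$'' as though each fibre inherits a single colour.  Once every fibre is split $2$--$1$, the global colouring of all fibres is fixed in advance, and since a fibre lies in many master blocks you cannot tailor its colouring block by block; what you actually need is, for each of the (up to symmetry) two non-constant patterns of majority colours on four fibres, an explicit $\mathrm{TD}(4,3)$ none of whose nine transversal blocks is monochromatic under that fixed fibre colouring.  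You assert this can be done ``in the spirit of Lemma~\ref{Lemma-type43}'' but do not exhibit it, and that is precisely the step where the argument either succeeds or fails.
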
	

\begin{corollary}
	For each $n\equiv 0$ or $1\pmod{4}$,
	there exists a Kirkman triple system
$K(6n+3)$, say $S$, such that $\chi(S)\leq 4$.
	\label{Cor-4fromquad}
\end{corollary}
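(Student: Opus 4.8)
The plan is to combine the cited existence result for $2$-colourable quadruple systems with Theorem~\ref{Thm2QKQcolouring}. First I would observe that a Kirkman triple system of order $6n+3$ can be obtained from a quadruple system of order $3n+1$ via the $K(Q)$ construction: indeed, if $Q$ is a quadruple system of order $v$, then $K(Q)$ has order $2v+1$, and setting $v=3n+1$ gives order $2(3n+1)+1 = 6n+3$, matching the claimed parameters. So the underlying combinatorial object of the right order certainly exists.

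Next I would invoke the hypothesis $n\equiv 0$ or $1\pmod 4$. By the quoted theorem of~\cite{FGLR2002,HLP1990}, this condition is exactly what guarantees the existence of a $2$-colourable quadruple system of order $3n+1$; call it $Q$. (One should also note in passing that $3n+1\equiv 1$ or $4\pmod{12}$ for such $n$, which is the admissibility condition for quadruple systems, so there is no inconsistency.) Now apply Theorem~\ref{Thm2QKQcolouring} with $\delta = 2$: since $Q$ is $2$-colourable, there exists a $K(Q)$ which is a $2\delta = 4$-colourable Kirkman triple system of order $2v+1 = 6n+3$. Taking $S$ to be this $K(Q)$, we have $\chi(S)\leq 4$, as required.

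The argument is essentially a one-line combination of the two preceding results, so there is no genuine obstacle; the only point requiring a moment's care is checking the arithmetic of the orders ($v=3n+1 \mapsto 2v+1 = 6n+3$) and confirming that the parity/congruence condition on $n$ in the quoted existence theorem is compatible with $3n+1$ being an admissible order for a quadruple system. Both checks are routine. I would present the corollary's proof as: let $Q$ be a $2$-colourable $Q(3n+1)$ (which exists by the quoted theorem since $n\equiv 0$ or $1\pmod 4$), apply Theorem~\ref{Thm2QKQcolouring} with $\delta=2$ to obtain a $4$-colourable $K(Q)$ of order $2(3n+1)+1 = 6n+3$, and take $S$ to be this system.
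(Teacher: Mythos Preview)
Your proposal is correct and matches the paper's approach exactly: the corollary is stated without proof in the paper, as it follows immediately by combining the existence of a $2$-colourable $Q(3n+1)$ (from the quoted theorem) with Theorem~\ref{Thm2QKQcolouring} applied at $\delta=2$, together with the arithmetic $2(3n+1)+1=6n+3$.
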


\section{Conclusion}
\label{conclusion}

Theorem~\ref{Thm-main2} posits, for each integer $\delta \geq 3$, the existence of infinitely many $\delta$-chromatic Kirkman triple systems.  In the particular case that $\delta=3$, Theorem~\ref{Thm-main1} shows that there is a $3$-chromatic Kirkman triple system for every order $v \equiv 3 \pmod{6}$,
thereby completely determining the spectrum for $3$-chromatic Kirkman triple systems.

For $\delta=4$, Theorem~\ref{main4colour} gives a $4$-chromatic Kirkman triple system whenever $v \equiv 33$ or $129 \pmod{384}$.  It is known that a $4$-chromatic Steiner triple system exists for every admissible order $v \geq 21$~\cite{CFGGKOPP, Haddad}; whether the same is true for Kirkman triple systems remains open.  We tested the 66,937 KTS(21) with nontrivial automorphism group for colouring properties, and found that they are all $3$-chromatic; thus, a $4$-chromatic KTS(21) would necessarily be free of nontrivial automorphisms.
\begin{question} \label{Question-kts21}
Does there exist a $4$-chromatic ${\rm KTS}(21)$?
\end{question}
Given the complete existence result for $4$-chromatic KTS($v$) for orders in several congruence classes, it seems likely that they will exist more generally.  We thus make the following conjecture.
\begin{conjecture} \label{Conj-4chrom}
There is a $4$-chromatic Kirkman triple system of every order $v \equiv 3 \pmod{6}$ with $v \geq v_0$ for some integer $v_0 \geq 21$.
\end{conjecture}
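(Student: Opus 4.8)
The plan is to separate the two inequalities $\chi(K)\le 4$ and $\chi(K)\ge 4$. The upper bound is cheap: any rainbow $\mathrm{KTS}(v)$ supplied by Theorem~\ref{Thm-main1} already has $\chi\le 3\le 4$, so for every admissible $v$ there is a $4$-colourable Kirkman triple system. All the genuine content lies in forcing $\chi\ge 4$, i.e.\ in producing, for each large $v\equiv 3\pmod 6$, a $\mathrm{KTS}(v)$ that admits a $4$-colouring but \emph{no} $3$-colouring, and the only robust device for ruling out $3$-colourings is monotonicity under subsystems: if $S'$ is a sub-$\mathrm{KTS}$ of $S$ then $\chi(S)\ge\chi(S')$, since any colouring of $S$ restricts to one of $S'$ (this is exactly how the lower bounds in Theorems~\ref{Thm-arbit} and~\ref{ThmQKQcolouring} are obtained). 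Thus the target reduces to: embed a \emph{fixed} $4$-chromatic Kirkman triple system --- for instance the $\mathrm{KTS}(33)$ of Lemma~\ref{4chrom33}, or one of the systems of Theorem~\ref{main4colour} --- as a genuine Kirkman subsystem of a $4$-colourable $\mathrm{KTS}(v)$, simultaneously for all large $v\equiv 3\pmod 6$.

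The natural vehicle is the Kirkman-frame machinery already used in this paper. Frames of type $g^u m^1$ (Theorem~\ref{Thm-ivebeenframed}) are flexible enough to realise every large order, exactly as in the proof of Theorem~\ref{Thm-main1}, where one chooses $g\in\{2,4,6\}$ to match $\omega=(v-w)/3$ modulo $6$, with a single exceptional $(g,m,u)=(4,10,\ast)$ family. I would run this same covering, but replace the rainbow ``tripling'' of Theorem~\ref{Thm-RainbowFrame} --- which colours $(x,i)$ by $i\in\{0,1,2\}$ and is therefore intrinsically a $3$-colouring --- by a genuine $4$-colouring built from the colour-shifting ingredients of Lemmas~\ref{Lemma-type43} and~\ref{Lemma-type83}, in which a blown-up point receives colour $c_j+i\pmod 4$ and no expanded block is monochromatic. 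One would designate a single injection point (the hole $W'$, or the fill-in on the tail group $m$), place there the fixed $4$-chromatic system with its equitable $4$-colouring as a bona fide subsystem, and fill every remaining group with a $4$-colourable $\mathrm{KTS}$-with-hole whose colouring extends the prescribed colouring across the shared hole. Granting such ingredients, $\chi\le 4$ follows from the global colouring and $\chi\ge 4$ from the embedded $4$-chromatic subsystem, and the residue bookkeeping is inherited from Theorem~\ref{Thm-main1}.

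The hard part --- and the reason the statement is a conjecture rather than a theorem --- is the collision between the two machineries just invoked. The $3$-colour frame construction of Theorem~\ref{Thm-RainbowFrame} covers \emph{all} orders but cannot host any $4$-chromatic piece: its inflation $3$-colours the entire point set $V'$, and a $4$-chromatic subsystem on $W'$ (or on any group) would then be forced to carry a proper $3$-colouring, which is impossible. Conversely, the colour-shifting construction of Theorem~\ref{Thm-arbit} \emph{does} embed a $4$-chromatic subsystem while keeping the whole $4$-colourable, but it is locked to orders of the form $8w+1$ and hence reaches only a sparse set of residues, not every class modulo $6$. Bridging the two --- producing a genuinely $4$-coloured frame inflation that is at once flexible enough to realise every large $v\equiv 3\pmod 6$ (which in the rainbow covering forces frame groups of size as small as $g=2$, whence the subsystem bound $3t+w\ge 2w+1$ drives the hole down to $w=3$, far too small to carry a $4$-chromatic system) and rigid enough to anchor a fixed $4$-chromatic subsystem --- is precisely what is missing.

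Concretely, a complete proof would most plausibly require a $4$-colour analogue of the rainbow fill-in used in Section~\ref{mainconstructions}: a supply of $4$-colourable $\mathrm{KTS}$-with-hole ingredients, for each of the finitely many group sizes thrown up by the $g^u m^1$ covering, whose $4$-colourings agree on a common prescribed $4$-coloured hole large enough to support $4$-chromaticity. Equivalently, one must either manufacture small $4$-chromatic ingredient systems together with a modified weighting that enlarges the injection point without sacrificing residue coverage, or supply a direct non-$3$-colourability argument (in the spirit of the geometric systems $\mathrm{AG}$ and $\mathrm{PG}$ cited in Section~\ref{smallorders}) valid across a whole congruence family. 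I expect the ingredient-construction route to be the decisive obstacle: unlike the $3$-colour setting, where every non-degenerate Steiner triple system is automatically non-$2$-colourable and the lower bound comes for free, here $4$-chromaticity must be installed by hand and then propagated uniformly, and it is the uniform propagation across all residue classes that remains unresolved.
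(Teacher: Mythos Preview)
The statement you were given is a \emph{conjecture}, not a theorem: the paper states it in Section~\ref{conclusion} as an open problem and offers no proof. There is therefore nothing in the paper to compare your attempt against.

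You have correctly recognised this. Your write-up is not a proof but a research-programme sketch that ends by identifying the obstruction --- and your diagnosis is accurate. The rainbow machinery of Theorem~\ref{Thm-RainbowFrame} covers all residues but is intrinsically $3$-coloured, so it cannot carry a $4$-chromatic subsystem; the colour-shifting machinery of Section~\ref{arbitrary} does carry such a subsystem but only produces orders $8w+1$; and Theorem~\ref{main4colour} hits only two residue classes modulo $384$. Your observation that the subsystem bound $3t+w\ge 2w+1$ forces the hole size down when the frame groups are small (as they must be to cover all residues in the proof of Theorem~\ref{Thm-main1}) is exactly the tension the paper leaves unresolved.

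One minor correction: your opening paragraph is misleading. A rainbow $\mathrm{KTS}(v)$ has $\chi=3$, so while it is trivially $4$-colourable, it is \emph{not} $4$-chromatic; the upper bound $\chi\le 4$ is not ``cheap'' in the sense you suggest, because you need it to hold on the \emph{same} system for which $\chi\ge 4$. You acknowledge this later, but the framing at the start makes it sound as though the two bounds can be established on different systems and then combined, which of course they cannot.
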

The value of $v_0$ in Conjecture~\ref{Conj-4chrom} is open for debate, and clearly the actual value (if it exists) is dependent on the answer to Question~\ref{Question-kts21}.  The smallest order for which we have actually found a $4$-chromatic Kirkman triple system is $v=33$, and so we conjecture that $v_0 \leq 33$.

Considering higher chromatic numbers, de~Brandes, Phelps and R\"{o}dl~\cite{dBPR} showed that for every integer $\delta \geq 3$, there exists a value $v_0$ for which there is a $\delta$-chromatic Steiner triple system of every admissible order $v \geq v_0$.  We conjecture that an analogous result holds for Kirkman triple systems.
\begin{conjecture}
For every integer $\delta \geq 3$, there exists an integer $v_0$ such that whenever $v \geq v_0$ and $v \equiv 3 \pmod{6}$, there is a $\delta$-chromatic Kirkman triple system of order $v$.
\end{conjecture}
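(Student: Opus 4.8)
Since the case $\delta=3$ is completely settled by Theorem~\ref{Thm-main1}, the plan is to fix $\delta\ge 4$ and to upgrade the ``infinitely many'' statement of Theorem~\ref{Thm-arbit} to ``every sufficiently large $v\equiv 3\pmod 6$''. As in Theorems~\ref{Thm-RainbowFrame} and~\ref{Thm-arbit}, the target system will be assembled so that it simultaneously carries an explicit $\delta$-colouring (so $\chi\le\delta$) and contains, as a subsystem, a $\delta$-chromatic Steiner triple system (so $\chi\ge\delta$). The lower bound is essentially free: one plants a $\delta$-chromatic STS from~\cite{dBPR} by the ``adjoin a new point to each block, then embed the resulting partial quadruple system'' device of Theorem~\ref{Thm-arbit}, using Corollary~\ref{QuadEmbed}. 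Thus the whole task is to make the colouring side work for \emph{every} large admissible order, not merely for the sparse set $\{8w+1\}$ obtained there.

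The first step is to make the frame-colouring gadgets flexible in their group sizes. I would prove the following variant of Lemma~\ref{Lemma-type43}: for every $g$ with $g\equiv 1\pmod 3$ (hence $g\notin\{2,6\}$, so a resolvable $3$-GDD of type $g^3$ exists), every $\delta\ge 4$, and every shift triple $(c_0,c_1,c_2)$, there is a $\delta$-coloured resolvable $3$-GDD of type $g^3$ in which $i_j$ has colour $c_j+i\pmod\delta$, no block is monochromatic, and $\{0_0,0_1,0_2\}$ is a block unless $c_0=c_1=c_2$. Given this, inflating the type-$2^4$ Kirkman frame of Theorem~\ref{framebyanyothername} by weight $g$ (exactly as Lemma~\ref{Lemma-type83} inflates by weight $4$) yields a $\delta$-coloured Kirkman frame of type $(2g)^4$ with the analogous properties. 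Running the construction of Theorem~\ref{Thm-arbit} with these wider blocks then produces, for every admissible embedding order $m$ above a threshold depending only on $\delta$, a $\delta$-chromatic {\rm KTS}$(2gm+1)$ which contains a fixed $\delta$-chromatic STS as a subsystem and a sub-{\rm KTS}$(9)$ on each ``group plus infinity'' part.

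These seeds realise $\delta$-chromatic Kirkman triple systems in a range of congruence classes, but only for orders of the rigid shape $2gm+1$. To reach \emph{all} large admissible orders I would then use a seed $\delta$-chromatic {\rm KTS}$(u)$ --- together with its sub-{\rm KTS}$(9)$ --- as the ingredient for one distinguished group in a frame-filling step modelled on Theorem~\ref{Thm-RainbowFrame}: take a Kirkman frame of a suitably flexible type provided by Theorems~\ref{framebyanyothername} and~\ref{Thm-ivebeenframed}, promote it to a $\delta$-coloured frame by inflating with the odd-weight general-shift gadget above, and fill the remaining groups with Kirkman triple systems sharing a common sub-{\rm KTS}$(9)$ and coloured within the $\delta$-colour palette consistently on that shared subsystem (for $\delta\ge 4$ there is ample slack, much as in the closing paragraph of the proof of Theorem~\ref{Thm-arbit}); the distinguished group receives the seed instead. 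With a judicious menu of frame types --- and, as in the proof of Theorem~\ref{Thm-main1}, a finite list of small exceptional orders disposed of directly (in all likelihood by computer) --- the output orders should sweep out every sufficiently large $v\equiv 3\pmod 6$.

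I foresee two obstacles. The lesser one is the case analysis hidden in the generalised Lemma~\ref{Lemma-type43}: it is already delicate for $g=4$ and $\delta\in\{5,6\}$, and carrying it out for arbitrary odd $g$ and arbitrary $\delta$ while retaining the marked triple will be laborious, though I expect it reduces, by a sub-block argument, largely to the $g=4$ case already in hand. The main obstacle is holding the chromatic number \emph{equal} to $\delta$ while covering \emph{every} large order: the lower bound is automatic once a $\delta$-chromatic subsystem is present, but each frame-filling step must be checked not to introduce a monochromatic block and not to exceed $\delta$ colours, and the number-theoretic matching of the available frame types and hole sizes against an arbitrary target order must be pushed through completely, base cases included. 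This last part is a Wilson-type covering argument of a kind familiar in design theory, and executing it in full --- rather than the conceptual framework, which is already in place --- is where the real difficulty lies; it is presumably why the statement appears here only as a conjecture.
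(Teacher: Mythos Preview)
The statement you are addressing is a \emph{conjecture} in the paper, presented in the concluding section as an open problem; the paper does not prove it and offers no proof sketch beyond pointing to the analogous result of de~Brandes, Phelps and R\"{o}dl for Steiner triple systems. There is therefore nothing to compare your attempt against.

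What you have written is not a proof but a research plan, and you yourself say as much in your final paragraph. The overall architecture --- plant a $\delta$-chromatic STS subsystem to force $\chi\ge\delta$, build the ambient KTS via coloured frames to force $\chi\le\delta$, then cover all large residues by varying frame types and ingredient sizes --- is a reasonable extrapolation of the paper's methods in Sections~\ref{mainconstructions}--\ref{arbitrary}. But several of the load-bearing steps are genuinely open. Your proposed generalisation of Lemma~\ref{Lemma-type43} to arbitrary weight $g$ is asserted rather than proved, and the $g=4$ case already required a substantial ad hoc case analysis; there is no obvious inductive or ``sub-block'' reduction back to $g=4$. More seriously, the frame-filling step that is supposed to sweep out every large $v\equiv 3\pmod 6$ is left entirely schematic: you would need, for each target order, compatible frame types from Theorems~\ref{framebyanyothername}--\ref{Thm-ivebeenframed}, $\delta$-colourable ingredient KTS of the right sizes sharing a common coloured sub-KTS$(9)$, and a seed $\delta$-chromatic KTS of a size matching the distinguished group --- none of which is currently available in the required generality. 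The paper's authors evidently regard exactly this covering problem as the obstruction, which is why the statement is posed as a conjecture.
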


In Section~\ref{quadruple}, we explored colouring properties of Kirkman triple systems formed from quadruple systems.  Specifically, Theorems~\ref{ThmQKQcolouring} and~\ref{Thm2QKQcolouring} together imply that if $Q$ is a $\delta$-chromatic quadruple system, then
there exists a $K(Q)$ such that 
 $\delta \leq \chi(K(Q)) \leq 2\delta$.  Applying this result with $2$-chromatic quadruple systems, we obtain Kirkman triple systems of orders congruent to $3$ and $9\pmod{24}$ whose chromatic number is either 3 or 4 (Corollary~\ref{Cor-4fromquad}).  While Theorem~\ref{Thm:delta+1} gives conditions on the quadruple system $Q$ which would imply that $\chi(K(Q)) \leq \delta+1$
for some $K(Q)$
 , we note that these conditions cannot be satisfied for a $2$-colouring of $Q$. Nevertheless, Example~\ref{Q13Example} shows that a $2$-chromatic quadruple system may give rise to a $3$-chromatic Kirkman triple system.
We therefore ask the following question.
\begin{question}
	Does there exist a  quadruple system $Q$ and a $K(Q)$ where 
	$Q$ is $2$-chromatic and 
	$K(Q)$ is $4$-chromatic?
\end{question}
\noindent
More generally, are there conditions on a $2$-chromatic quadruple system $Q$ that characterize whether $K(Q)$ is $3$- or $4$-chromatic?

For a $\delta$-chromatic quadruple system with $\delta \geq 3$, we ask if the lower bound from Corollary~\ref{Cor:KQlowerbound} on the chromatic number of $K(Q)$ is tight; a positive answer would yield an additional method to construct Kirkman triple systems with specified chromatic number.
\begin{question}
Let $\delta \geq 3$ be an integer.  Does there exist a $\delta$-chromatic quadruple system $Q$ 
and a  $K(Q)$
such that $\chi(K(Q))=\delta$?
\end{question}

We also ask whether the upper bound from Theorem~\ref{Thm2QKQcolouring} is tight, particularly if $\delta \geq 3$.
\begin{question}
Does there exist a $\delta$-chromatic quadruple system $Q$ 
and a  $K(Q)$
such that $\chi(K(Q))=2\delta$?
\end{question}

Finally, we note that the question of colouring Kirkman triple systems can be generalized in several ways.  It would be interesting to study colourings of resolvable BIBD$(v,k,\lambda)$ for arbitrary values of $k$ and/or $\lambda$.  Another natural extension would be to consider colourings of resolvable $\ell$-cycle decompositions of the complete graph (or the complete graph with a 1-factor $I$ removed, for even orders).  More generally, the {\em Oberwolfach Problem} OP$(F)$ asks, given a 2-regular graph $F$ of order $v$, whether there exists a 2-factorization of $K_v$ or $K_v-I$ in which each 2-factor is isomorphic to $F$ (see~\cite{survey} for further details); requiring a solution to the Oberwolfach Problem or its variants (such as the Hamilton-Waterloo Problem) which satisfies vertex-colouring properties provides an open area of study.

\section{Acknowledgements}

We thank Petteri Kaski for providing us with the collection of 66,937 KTS(21) having nontrivial automorphism groups.  These were determined in conjunction with the classification of the STS(21)s admitting a nontrivial group of automorphisms~\cite{Kaski2005}.

Authors Burgess, Danziger and Pike acknowledge research grant support from NSERC Discovery Grants RGPIN-2019-04328,  RGPIN-2022-03816 and RGPIN-2022-03829, respectively.

\section*{Appendix}

In Section~\ref{smallorders} we remarked that each of the thirty KTS(33) presented by Tonchev and Vanstone in~\cite{TV}
is 4-chromatic, and moreover, that each has a 4-colouring of type $8^3 9^1$.  
Below we provide details for how to 
obtain such a colouring for each of these designs.  
We also found that 13 of the KTS(33) which are 1-rotational over the cyclic group from~\cite{BurattiZuanni} are 3-chromatic, and provide the details of a colouring of one such system.

We first consider the systems from~\cite{TV}. In each case the point set is $V=\mathbb{Z}_{33}$.
For the first 29 systems, we provide the blocks of a starter parallel class $\cP$ from which ten other parallel classes
can be obtained by adding $t \in \{3,6,\ldots,30\}$ to each point of each block of $\cP$.
We also provide a set $\cS$ of five starter blocks; for each $B \in \cS$,
$\big\{ B+t \mid t \in \{0,3,6,\ldots,30\} \big\}$ is a parallel class.
For the 30th system, we provide the blocks of a starter parallel class $\cP$ from which fifteen other parallel classes
can be obtained by repeatedly applying the permutation $(0,1,2,\ldots,31)(32)$ to each point of each block of $\cP$.

For each system we indicate how to achieve a 4-colouring of type $8^3 9^1$ by showing a sequence of length 33 in which the colour assigned to point 0 is followed by the colour of point 1, etc.

\hspace*{19mm}
\begin{enumerate}[label=\textbf{KTS(33) \#\arabic*:~},leftmargin=32mm]

\item
\begin{tabbing}
$\cP=\big\{$\=
$\{ 1, 2, 4\},\{ 5, 6, 8\},\{ 18, 19, 21\},\{ 10, 16, 28\},\{ 11, 26, 32\},\{ 9, 24, 30\}$,
\\ \> $\{ 7, 14, 23\},\{ 3, 12, 29\},\{ 15, 22, 31\},\{ 13, 17, 27\},\{ 20, 25, 0\}
\big\}$
\end{tabbing}

$\cS=\big\{ \{2, 6, 16\},\{ 3, 7, 17\},\{ 1, 6, 14\},\{ 3, 8, 16\},\{ 1, 12, 23\} \big\}$
\iffalse
 4 5 7 7 8 10 10 11 13 13 14 16 16 17 19 19 20 22 22 23 25 25 26 28 28 29 31 31 32 1 8 9 11 11 12 14 14 15 17 17 18 20 20 21 23 23 24 26 26 27 29 29 30 32 32 0 2 2 3 5 21 22 24 24 25 27 27 28 30 30 31 0 0 1 3 3 4 6 6 7 9 9 10 12 12 13 15 15 16 18 13 19 31 16 22 1 19 25 4 22 28 7 25 31 10 28 1 13 31 4 16 1 7 19 4 10 22 7 13 25 14 29 2 17 32 5 20 2 8 23 5 11 26 8 14 29 11 17 32 14 20 2 17 23 5 20 26 8 23 29 12 27 0 15 30 3 18 0 6 21 3 9 24 6 12 27 9 15 30 12 18 0 15 21 3 18 24 6 21 27 10 17 26 13 20 29 16 23 32 19 26 2 22 29 5 25 32 8 28 2 11 31 5 14 1 8 17 4 11 20 6 15 32 9 18 2 12 21 5 15 24 8 18 27 11 21 30 14 24 0 17 27 3 20 30 6 23 0 9 26 18 25 1 21 28 4 24 31 7 27 1 10 30 4 13 0 7 16 3 10 19 6 13 22 9 16 25 12 19 28 16 20 30 19 23 0 22 26 3 25 29 6 28 32 9 31 2 12 1 5 15 4 8 18 7 11 21 10 14 24 23 28 3 26 31 6 29 1 9 32 4 12 2 7 15 5 10 18 8 13 21 11 16 24 14 19 27 17 22 30 5 9 19 8 12 22 11 15 25 14 18 28 17 21 31 20 24 1 23 27 4 26 30 7 29 0 10 32 3 13 6 10 20 9 13 23 12 16 26 15 19 29 18 22 32 21 25 2 24 28 5 27 31 8 30 1 11 0 4 14 4 9 17 7 12 20 10 15 23 13 18 26 16 21 29 19 24 32 22 27 2 25 30 5 28 0 8 31 3 11 6 11 19 9 14 22 12 17 25 15 20 28 18 23 31 21 26 1 24 29 4 27 32 7 30 2 10 0 5 13 4 15 26 7 18 29 10 21 32 13 24 2 16 27 5 19 30 8 22 0 11 25 3 14 28 6 17 31 9 20
system_cnt = 1
KTS number 1 is not 3-colourable
\fi

4-colouring: {\tt 111221112211122233324433344433444}
%CC sizes:  9  8  8  8
%KTS 1 is 4-chromatic

\item
\begin{tabbing}
$\cP=\big\{$\=
$\{ 1, 2, 4\},\{ 8, 9, 11\},\{ 18, 19, 21\},\{ 13, 25, 31\},\{ 14, 26, 32\},\{ 12, 24, 30\}$,
\\ \> $\{ 5, 22, 29\},\{ 3, 20, 27\},\{ 7, 16, 0\},\{ 10, 15, 23\},\{ 6, 17, 28\}
\big\}$
\end{tabbing}

$\cS=\big\{ \{1, 5, 15\},\{ 2, 6, 16\},\{ 3, 7, 17\},\{ 2, 7, 15\},\{ 3, 8, 16\} \big\}$
\iffalse
 4 5 7 7 8 10 10 11 13 13 14 16 16 17 19 19 20 22 22 23 25 25 26 28 28 29 31 31 32 1 11 12 14 14 15 17 17 18 20 20 21 23 23 24 26 26 27 29 29 30 32 32 0 2 2 3 5 5 6 8 21 22 24 24 25 27 27 28 30 30 31 0 0 1 3 3 4 6 6 7 9 9 10 12 12 13 15 15 16 18 16 28 1 19 31 4 22 1 7 25 4 10 28 7 13 31 10 16 1 13 19 4 16 22 7 19 25 10 22 28 17 29 2 20 32 5 23 2 8 26 5 11 29 8 14 32 11 17 2 14 20 5 17 23 8 20 26 11 23 29 15 27 0 18 30 3 21 0 6 24 3 9 27 6 12 30 9 15 0 12 18 3 15 21 6 18 24 9 21 27 8 25 32 11 28 2 14 31 5 17 1 8 20 4 11 23 7 14 26 10 17 29 13 20 32 16 23 2 19 26 6 23 30 9 26 0 12 29 3 15 32 6 18 2 9 21 5 12 24 8 15 27 11 18 30 14 21 0 17 24 10 19 3 13 22 6 16 25 9 19 28 12 22 31 15 25 1 18 28 4 21 31 7 24 1 10 27 4 13 30 13 18 26 16 21 29 19 24 32 22 27 2 25 30 5 28 0 8 31 3 11 1 6 14 4 9 17 7 12 20 9 20 31 12 23 1 15 26 4 18 29 7 21 32 10 24 2 13 27 5 16 30 8 19 0 11 22 3 14 25 4 8 18 7 11 21 10 14 24 13 17 27 16 20 30 19 23 0 22 26 3 25 29 6 28 32 9 31 2 12 5 9 19 8 12 22 11 15 25 14 18 28 17 21 31 20 24 1 23 27 4 26 30 7 29 0 10 32 3 13 6 10 20 9 13 23 12 16 26 15 19 29 18 22 32 21 25 2 24 28 5 27 31 8 30 1 11 0 4 14 5 10 18 8 13 21 11 16 24 14 19 27 17 22 30 20 25 0 23 28 3 26 31 6 29 1 9 32 4 12 6 11 19 9 14 22 12 17 25 15 20 28 18 23 31 21 26 1 24 29 4 27 32 7 30 2 10 0 5 13
system_cnt = 2
KTS number 2 is not 3-colourable
\fi

4-colouring: {\tt 111221112211122233324433344433444}
%CC sizes:  9  8  8  8
%KTS 2 is 4-chromatic

\item
\begin{tabbing}
$\cP=\big\{$\=
$\{ 1, 2, 4\},\{ 14, 15, 17\},\{ 30, 31, 0\},\{ 10, 22, 28\},\{ 5, 11, 26\},\{ 6, 12, 27\}$,
\\ \> $\{ 7, 16, 23\},\{ 3, 20, 29\},\{ 9, 18, 25\},\{ 19, 24, 32\},\{ 8, 13, 21\}
\big\}$
\end{tabbing}

$\cS=\big\{ \{1, 5, 15\},\{ 2, 6, 16\},\{ 3, 7, 17\},\{ 3, 8, 16\},\{ 1, 12, 23\} \big\}$
\iffalse
 4 5 7 7 8 10 10 11 13 13 14 16 16 17 19 19 20 22 22 23 25 25 26 28 28 29 31 31 32 1 17 18 20 20 21 23 23 24 26 26 27 29 29 30 32 32 0 2 2 3 5 5 6 8 8 9 11 11 12 14 0 1 3 3 4 6 6 7 9 9 10 12 12 13 15 15 16 18 18 19 21 21 22 24 24 25 27 27 28 30 13 25 31 16 28 1 19 31 4 22 1 7 25 4 10 28 7 13 31 10 16 1 13 19 4 16 22 7 19 25 8 14 29 11 17 32 14 20 2 17 23 5 20 26 8 23 29 11 26 32 14 29 2 17 32 5 20 2 8 23 9 15 30 12 18 0 15 21 3 18 24 6 21 27 9 24 30 12 27 0 15 30 3 18 0 6 21 3 9 24 10 19 26 13 22 29 16 25 32 19 28 2 22 31 5 25 1 8 28 4 11 31 7 14 1 10 17 4 13 20 6 23 32 9 26 2 12 29 5 15 32 8 18 2 11 21 5 14 24 8 17 27 11 20 30 14 23 0 17 26 12 21 28 15 24 31 18 27 1 21 30 4 24 0 7 27 3 10 30 6 13 0 9 16 3 12 19 6 15 22 22 27 2 25 30 5 28 0 8 31 3 11 1 6 14 4 9 17 7 12 20 10 15 23 13 18 26 16 21 29 11 16 24 14 19 27 17 22 30 20 25 0 23 28 3 26 31 6 29 1 9 32 4 12 2 7 15 5 10 18 4 8 18 7 11 21 10 14 24 13 17 27 16 20 30 19 23 0 22 26 3 25 29 6 28 32 9 31 2 12 5 9 19 8 12 22 11 15 25 14 18 28 17 21 31 20 24 1 23 27 4 26 30 7 29 0 10 32 3 13 6 10 20 9 13 23 12 16 26 15 19 29 18 22 32 21 25 2 24 28 5 27 31 8 30 1 11 0 4 14 6 11 19 9 14 22 12 17 25 15 20 28 18 23 31 21 26 1 24 29 4 27 32 7 30 2 10 0 5 13 4 15 26 7 18 29 10 21 32 13 24 2 16 27 5 19 30 8 22 0 11 25 3 14 28 6 17 31 9 20
system_cnt = 3
KTS number 3 is not 3-colourable
\fi

4-colouring: {\tt 111221112211122233324343444334443}
%CC sizes:  9  8  8  8
%KTS 3 is 4-chromatic

\item
\begin{tabbing}
$\cP=\big\{$\=
$\{ 1, 2, 4\},\{ 8, 9, 11\},\{ 24, 25, 27\},\{ 10, 16, 28\},\{ 5, 17, 32\},\{ 12, 18, 30\}$,
\\ \> $\{ 7, 14, 31\},\{ 3, 20, 29\},\{ 6, 15, 22\},\{ 19, 23, 0\},\{ 13, 21, 26\}
\big\}$
\end{tabbing}

$\cS=\big\{ \{2, 6, 16\},\{ 3, 7, 17\},\{ 1, 6, 26\},\{ 2, 7, 27\},\{ 1, 12, 23\} \big\}$
\iffalse
 4 5 7 7 8 10 10 11 13 13 14 16 16 17 19 19 20 22 22 23 25 25 26 28 28 29 31 31 32 1 11 12 14 14 15 17 17 18 20 20 21 23 23 24 26 26 27 29 29 30 32 32 0 2 2 3 5 5 6 8 27 28 30 30 31 0 0 1 3 3 4 6 6 7 9 9 10 12 12 13 15 15 16 18 18 19 21 21 22 24 13 19 31 16 22 1 19 25 4 22 28 7 25 31 10 28 1 13 31 4 16 1 7 19 4 10 22 7 13 25 8 20 2 11 23 5 14 26 8 17 29 11 20 32 14 23 2 17 26 5 20 29 8 23 32 11 26 2 14 29 15 21 0 18 24 3 21 27 6 24 30 9 27 0 12 30 3 15 0 6 18 3 9 21 6 12 24 9 15 27 10 17 1 13 20 4 16 23 7 19 26 10 22 29 13 25 32 16 28 2 19 31 5 22 1 8 25 4 11 28 6 23 32 9 26 2 12 29 5 15 32 8 18 2 11 21 5 14 24 8 17 27 11 20 30 14 23 0 17 26 9 18 25 12 21 28 15 24 31 18 27 1 21 30 4 24 0 7 27 3 10 30 6 13 0 9 16 3 12 19 22 26 3 25 29 6 28 32 9 31 2 12 1 5 15 4 8 18 7 11 21 10 14 24 13 17 27 16 20 30 16 24 29 19 27 32 22 30 2 25 0 5 28 3 8 31 6 11 1 9 14 4 12 17 7 15 20 10 18 23 5 9 19 8 12 22 11 15 25 14 18 28 17 21 31 20 24 1 23 27 4 26 30 7 29 0 10 32 3 13 6 10 20 9 13 23 12 16 26 15 19 29 18 22 32 21 25 2 24 28 5 27 31 8 30 1 11 0 4 14 4 9 29 7 12 32 10 15 2 13 18 5 16 21 8 19 24 11 22 27 14 25 30 17 28 0 20 31 3 23 5 10 30 8 13 0 11 16 3 14 19 6 17 22 9 20 25 12 23 28 15 26 31 18 29 1 21 32 4 24 4 15 26 7 18 29 10 21 32 13 24 2 16 27 5 19 30 8 22 0 11 25 3 14 28 6 17 31 9 20
system_cnt = 4
KTS number 4 is not 3-colourable
\fi

4-colouring: {\tt 111221112211122233324343434343444}
%CC sizes:  9  8  8  8
%KTS 4 is 4-chromatic

\item
\begin{tabbing}
$\cP=\big\{$\=
$\{ 1, 2, 4\},\{ 20, 21, 23\},\{ 12, 13, 15\},\{ 10, 22, 28\},\{ 5, 11, 26\},\{ 3, 18, 30\}$,
\\ \> $\{ 7, 14, 31\},\{ 8, 17, 24\},\{ 9, 16, 0\},\{ 6, 25, 29\},\{ 19, 27, 32\}
\big\}$
\end{tabbing}

$\cS=\big\{ \{2, 6, 16\},\{ 3, 7, 17\},\{ 1, 6, 26\},\{ 2, 7, 27\},\{ 1, 12, 23\} \big\}$
\iffalse
 4 5 7 7 8 10 10 11 13 13 14 16 16 17 19 19 20 22 22 23 25 25 26 28 28 29 31 31 32 1 23 24 26 26 27 29 29 30 32 32 0 2 2 3 5 5 6 8 8 9 11 11 12 14 14 15 17 17 18 20 15 16 18 18 19 21 21 22 24 24 25 27 27 28 30 30 31 0 0 1 3 3 4 6 6 7 9 9 10 12 13 25 31 16 28 1 19 31 4 22 1 7 25 4 10 28 7 13 31 10 16 1 13 19 4 16 22 7 19 25 8 14 29 11 17 32 14 20 2 17 23 5 20 26 8 23 29 11 26 32 14 29 2 17 32 5 20 2 8 23 6 21 0 9 24 3 12 27 6 15 30 9 18 0 12 21 3 15 24 6 18 27 9 21 30 12 24 0 15 27 10 17 1 13 20 4 16 23 7 19 26 10 22 29 13 25 32 16 28 2 19 31 5 22 1 8 25 4 11 28 11 20 27 14 23 30 17 26 0 20 29 3 23 32 6 26 2 9 29 5 12 32 8 15 2 11 18 5 14 21 12 19 3 15 22 6 18 25 9 21 28 12 24 31 15 27 1 18 30 4 21 0 7 24 3 10 27 6 13 30 9 28 32 12 31 2 15 1 5 18 4 8 21 7 11 24 10 14 27 13 17 30 16 20 0 19 23 3 22 26 22 30 2 25 0 5 28 3 8 31 6 11 1 9 14 4 12 17 7 15 20 10 18 23 13 21 26 16 24 29 5 9 19 8 12 22 11 15 25 14 18 28 17 21 31 20 24 1 23 27 4 26 30 7 29 0 10 32 3 13 6 10 20 9 13 23 12 16 26 15 19 29 18 22 32 21 25 2 24 28 5 27 31 8 30 1 11 0 4 14 4 9 29 7 12 32 10 15 2 13 18 5 16 21 8 19 24 11 22 27 14 25 30 17 28 0 20 31 3 23 5 10 30 8 13 0 11 16 3 14 19 6 17 22 9 20 25 12 23 28 15 26 31 18 29 1 21 32 4 24 4 15 26 7 18 29 10 21 32 13 24 2 16 27 5 19 30 8 22 0 11 25 3 14 28 6 17 31 9 20
system_cnt = 5
KTS number 5 is not 3-colourable
\fi

4-colouring: {\tt 111221112211122233324343434343444}
%CC sizes:  9  8  8  8
%KTS 5 is 4-chromatic

\item
\begin{tabbing}
$\cP=\big\{$\=
$\{ 1, 2, 4\},\{ 17, 18, 20\},\{ 24, 25, 27\},\{ 10, 16, 28\},\{ 5, 11, 23\},\{ 3, 9, 21\}$,
\\ \> $\{ 7, 14, 31\},\{ 8, 15, 32\},\{ 6, 13, 30\},\{ 22, 26, 12\},\{ 29, 0, 19\}
\big\}$
\end{tabbing}

$\cS=\big\{ \{3, 7, 26\},\{ 1, 6, 14\},\{ 2, 7, 15\},\{ 3, 8, 16\},\{ 1, 12, 23\} \big\}$
\iffalse
 4 5 7 7 8 10 10 11 13 13 14 16 16 17 19 19 20 22 22 23 25 25 26 28 28 29 31 31 32 1 20 21 23 23 24 26 26 27 29 29 30 32 32 0 2 2 3 5 5 6 8 8 9 11 11 12 14 14 15 17 27 28 30 30 31 0 0 1 3 3 4 6 6 7 9 9 10 12 12 13 15 15 16 18 18 19 21 21 22 24 13 19 31 16 22 1 19 25 4 22 28 7 25 31 10 28 1 13 31 4 16 1 7 19 4 10 22 7 13 25 8 14 26 11 17 29 14 20 32 17 23 2 20 26 5 23 29 8 26 32 11 29 2 14 32 5 17 2 8 20 6 12 24 9 15 27 12 18 30 15 21 0 18 24 3 21 27 6 24 30 9 27 0 12 30 3 15 0 6 18 10 17 1 13 20 4 16 23 7 19 26 10 22 29 13 25 32 16 28 2 19 31 5 22 1 8 25 4 11 28 11 18 2 14 21 5 17 24 8 20 27 11 23 30 14 26 0 17 29 3 20 32 6 23 2 9 26 5 12 29 9 16 0 12 19 3 15 22 6 18 25 9 21 28 12 24 31 15 27 1 18 30 4 21 0 7 24 3 10 27 25 29 15 28 32 18 31 2 21 1 5 24 4 8 27 7 11 30 10 14 0 13 17 3 16 20 6 19 23 9 32 3 22 2 6 25 5 9 28 8 12 31 11 15 1 14 18 4 17 21 7 20 24 10 23 27 13 26 30 16 6 10 29 9 13 32 12 16 2 15 19 5 18 22 8 21 25 11 24 28 14 27 31 17 30 1 20 0 4 23 4 9 17 7 12 20 10 15 23 13 18 26 16 21 29 19 24 32 22 27 2 25 30 5 28 0 8 31 3 11 5 10 18 8 13 21 11 16 24 14 19 27 17 22 30 20 25 0 23 28 3 26 31 6 29 1 9 32 4 12 6 11 19 9 14 22 12 17 25 15 20 28 18 23 31 21 26 1 24 29 4 27 32 7 30 2 10 0 5 13 4 15 26 7 18 29 10 21 32 13 24 2 16 27 5 19 30 8 22 0 11 25 3 14 28 6 17 31 9 20
system_cnt = 6
KTS number 6 is not 3-colourable
\fi

4-colouring: {\tt 111221112211122233324433344433444}
%CC sizes:  9  8  8  8
%KTS 6 is 4-chromatic

\item
\begin{tabbing}
$\cP=\big\{$\=
$\{ 1, 2, 4\},\{ 8, 9, 11\},\{ 18, 19, 21\},\{ 13, 25, 31\},\{ 14, 26, 32\},\{ 12, 24, 30\}$,
\\ \> $\{ 5, 22, 29\},\{ 3, 20, 27\},\{ 7, 16, 0\},\{ 10, 15, 23\},\{ 28, 6, 17\}
\big\}$
\end{tabbing}

$\cS=\big\{ \{1, 5, 24\},\{ 2, 6, 25\},\{ 3, 7, 26\},\{ 2, 7, 15\},\{ 3, 8, 16\} \big\}$
\iffalse
 4 5 7 7 8 10 10 11 13 13 14 16 16 17 19 19 20 22 22 23 25 25 26 28 28 29 31 31 32 1 11 12 14 14 15 17 17 18 20 20 21 23 23 24 26 26 27 29 29 30 32 32 0 2 2 3 5 5 6 8 21 22 24 24 25 27 27 28 30 30 31 0 0 1 3 3 4 6 6 7 9 9 10 12 12 13 15 15 16 18 16 28 1 19 31 4 22 1 7 25 4 10 28 7 13 31 10 16 1 13 19 4 16 22 7 19 25 10 22 28 17 29 2 20 32 5 23 2 8 26 5 11 29 8 14 32 11 17 2 14 20 5 17 23 8 20 26 11 23 29 15 27 0 18 30 3 21 0 6 24 3 9 27 6 12 30 9 15 0 12 18 3 15 21 6 18 24 9 21 27 8 25 32 11 28 2 14 31 5 17 1 8 20 4 11 23 7 14 26 10 17 29 13 20 32 16 23 2 19 26 6 23 30 9 26 0 12 29 3 15 32 6 18 2 9 21 5 12 24 8 15 27 11 18 30 14 21 0 17 24 10 19 3 13 22 6 16 25 9 19 28 12 22 31 15 25 1 18 28 4 21 31 7 24 1 10 27 4 13 30 13 18 26 16 21 29 19 24 32 22 27 2 25 30 5 28 0 8 31 3 11 1 6 14 4 9 17 7 12 20 31 9 20 1 12 23 4 15 26 7 18 29 10 21 32 13 24 2 16 27 5 19 30 8 22 0 11 25 3 14 4 8 27 7 11 30 10 14 0 13 17 3 16 20 6 19 23 9 22 26 12 25 29 15 28 32 18 31 2 21 5 9 28 8 12 31 11 15 1 14 18 4 17 21 7 20 24 10 23 27 13 26 30 16 29 0 19 32 3 22 6 10 29 9 13 32 12 16 2 15 19 5 18 22 8 21 25 11 24 28 14 27 31 17 30 1 20 0 4 23 5 10 18 8 13 21 11 16 24 14 19 27 17 22 30 20 25 0 23 28 3 26 31 6 29 1 9 32 4 12 6 11 19 9 14 22 12 17 25 15 20 28 18 23 31 21 26 1 24 29 4 27 32 7 30 2 10 0 5 13
system_cnt = 7
KTS number 7 is not 3-colourable
\fi

4-colouring: {\tt 111221112211122233324433344433444}
%CC sizes:  9  8  8  8
%KTS 7 is 4-chromatic

\item
\begin{tabbing}
$\cP=\big\{$\=
$\{ 1, 2, 4\},\{ 14, 15, 17\},\{ 30, 31, 0\},\{ 22, 28, 10\},\{ 5, 11, 26\},\{ 6, 12, 27\}$,
\\ \> $\{ 16, 23, 7\},\{ 29, 3, 20\},\{ 18, 25, 9\},\{ 19, 24, 32\},\{ 8, 13, 21\}
\big\}$
\end{tabbing}

$\cS=\big\{ \{1, 5, 24\},\{ 2, 6, 25\},\{ 3, 7, 26\},\{ 3, 8, 16\},\{ 1, 12, 23\} \big\}$
\iffalse
 4 5 7 7 8 10 10 11 13 13 14 16 16 17 19 19 20 22 22 23 25 25 26 28 28 29 31 31 32 1 17 18 20 20 21 23 23 24 26 26 27 29 29 30 32 32 0 2 2 3 5 5 6 8 8 9 11 11 12 14 0 1 3 3 4 6 6 7 9 9 10 12 12 13 15 15 16 18 18 19 21 21 22 24 24 25 27 27 28 30 25 31 13 28 1 16 31 4 19 1 7 22 4 10 25 7 13 28 10 16 31 13 19 1 16 22 4 19 25 7 8 14 29 11 17 32 14 20 2 17 23 5 20 26 8 23 29 11 26 32 14 29 2 17 32 5 20 2 8 23 9 15 30 12 18 0 15 21 3 18 24 6 21 27 9 24 30 12 27 0 15 30 3 18 0 6 21 3 9 24 19 26 10 22 29 13 25 32 16 28 2 19 31 5 22 1 8 25 4 11 28 7 14 31 10 17 1 13 20 4 32 6 23 2 9 26 5 12 29 8 15 32 11 18 2 14 21 5 17 24 8 20 27 11 23 30 14 26 0 17 21 28 12 24 31 15 27 1 18 30 4 21 0 7 24 3 10 27 6 13 30 9 16 0 12 19 3 15 22 6 22 27 2 25 30 5 28 0 8 31 3 11 1 6 14 4 9 17 7 12 20 10 15 23 13 18 26 16 21 29 11 16 24 14 19 27 17 22 30 20 25 0 23 28 3 26 31 6 29 1 9 32 4 12 2 7 15 5 10 18 4 8 27 7 11 30 10 14 0 13 17 3 16 20 6 19 23 9 22 26 12 25 29 15 28 32 18 31 2 21 5 9 28 8 12 31 11 15 1 14 18 4 17 21 7 20 24 10 23 27 13 26 30 16 29 0 19 32 3 22 6 10 29 9 13 32 12 16 2 15 19 5 18 22 8 21 25 11 24 28 14 27 31 17 30 1 20 0 4 23 6 11 19 9 14 22 12 17 25 15 20 28 18 23 31 21 26 1 24 29 4 27 32 7 30 2 10 0 5 13 4 15 26 7 18 29 10 21 32 13 24 2 16 27 5 19 30 8 22 0 11 25 3 14 28 6 17 31 9 20
system_cnt = 8
KTS number 8 is not 3-colourable
\fi

4-colouring: {\tt 111221112211122233324433344433444}
%CC sizes:  9  8  8  8
%KTS 8 is 4-chromatic

\item
\begin{tabbing}
$\cP=\big\{$\=
$\{ 1, 2, 4\},\{ 17, 18, 20\},\{ 24, 25, 27\},\{ 10, 16, 28\},\{ 5, 11, 23\},\{ 3, 9, 21\}$,
\\ \> $\{ 7, 14, 31\},\{ 8, 15, 32\},\{ 6, 13, 30\},\{ 22, 26, 12\},\{ 29, 0, 19\}
\big\}$
\end{tabbing}

$\cS=\big\{ \{3, 7, 26\},\{ 1, 6, 26\},\{ 2, 7, 27\},\{ 3, 8, 28\},\{ 1, 12, 23\} \big\}$
\iffalse
 4 5 7 7 8 10 10 11 13 13 14 16 16 17 19 19 20 22 22 23 25 25 26 28 28 29 31 31 32 1 20 21 23 23 24 26 26 27 29 29 30 32 32 0 2 2 3 5 5 6 8 8 9 11 11 12 14 14 15 17 27 28 30 30 31 0 0 1 3 3 4 6 6 7 9 9 10 12 12 13 15 15 16 18 18 19 21 21 22 24 13 19 31 16 22 1 19 25 4 22 28 7 25 31 10 28 1 13 31 4 16 1 7 19 4 10 22 7 13 25 8 14 26 11 17 29 14 20 32 17 23 2 20 26 5 23 29 8 26 32 11 29 2 14 32 5 17 2 8 20 6 12 24 9 15 27 12 18 30 15 21 0 18 24 3 21 27 6 24 30 9 27 0 12 30 3 15 0 6 18 10 17 1 13 20 4 16 23 7 19 26 10 22 29 13 25 32 16 28 2 19 31 5 22 1 8 25 4 11 28 11 18 2 14 21 5 17 24 8 20 27 11 23 30 14 26 0 17 29 3 20 32 6 23 2 9 26 5 12 29 9 16 0 12 19 3 15 22 6 18 25 9 21 28 12 24 31 15 27 1 18 30 4 21 0 7 24 3 10 27 25 29 15 28 32 18 31 2 21 1 5 24 4 8 27 7 11 30 10 14 0 13 17 3 16 20 6 19 23 9 32 3 22 2 6 25 5 9 28 8 12 31 11 15 1 14 18 4 17 21 7 20 24 10 23 27 13 26 30 16 6 10 29 9 13 32 12 16 2 15 19 5 18 22 8 21 25 11 24 28 14 27 31 17 30 1 20 0 4 23 4 9 29 7 12 32 10 15 2 13 18 5 16 21 8 19 24 11 22 27 14 25 30 17 28 0 20 31 3 23 5 10 30 8 13 0 11 16 3 14 19 6 17 22 9 20 25 12 23 28 15 26 31 18 29 1 21 32 4 24 6 11 31 9 14 1 12 17 4 15 20 7 18 23 10 21 26 13 24 29 16 27 32 19 30 2 22 0 5 25 4 15 26 7 18 29 10 21 32 13 24 2 16 27 5 19 30 8 22 0 11 25 3 14 28 6 17 31 9 20
system_cnt = 9
KTS number 9 is not 3-colourable
\fi

4-colouring: {\tt 111221112211122233324343434343444}
%CC sizes:  9  8  8  8
%KTS 9 is 4-chromatic

\item
\begin{tabbing}
$\cP=\big\{$\=
$\{ 1, 2, 5\},\{ 8, 9, 12\},\{ 24, 25, 28\},\{ 4, 6, 18\},\{ 17, 19, 31\},\{ 11, 30, 32\}$,
\\ \> $\{ 7, 13, 22\},\{ 14, 20, 29\},\{ 3, 21, 27\},\{ 10, 15, 23\},\{ 16, 26, 0\}
\big\}$
\end{tabbing}

$\cS=\big\{ \{2, 7, 15\},\{ 3, 8, 16\},\{ 1, 8, 24\},\{ 3, 10, 26\},\{ 1, 12, 23\} \big\}$
\iffalse
 4 5 8 7 8 11 10 11 14 13 14 17 16 17 20 19 20 23 22 23 26 25 26 29 28 29 32 31 32 2 11 12 15 14 15 18 17 18 21 20 21 24 23 24 27 26 27 30 29 30 0 32 0 3 2 3 6 5 6 9 27 28 31 30 31 1 0 1 4 3 4 7 6 7 10 9 10 13 12 13 16 15 16 19 18 19 22 21 22 25 7 9 21 10 12 24 13 15 27 16 18 30 19 21 0 22 24 3 25 27 6 28 30 9 31 0 12 1 3 15 20 22 1 23 25 4 26 28 7 29 31 10 32 1 13 2 4 16 5 7 19 8 10 22 11 13 25 14 16 28 14 0 2 17 3 5 20 6 8 23 9 11 26 12 14 29 15 17 32 18 20 2 21 23 5 24 26 8 27 29 10 16 25 13 19 28 16 22 31 19 25 1 22 28 4 25 31 7 28 1 10 31 4 13 1 7 16 4 10 19 17 23 32 20 26 2 23 29 5 26 32 8 29 2 11 32 5 14 2 8 17 5 11 20 8 14 23 11 17 26 6 24 30 9 27 0 12 30 3 15 0 6 18 3 9 21 6 12 24 9 15 27 12 18 30 15 21 0 18 24 13 18 26 16 21 29 19 24 32 22 27 2 25 30 5 28 0 8 31 3 11 1 6 14 4 9 17 7 12 20 19 29 3 22 32 6 25 2 9 28 5 12 31 8 15 1 11 18 4 14 21 7 17 24 10 20 27 13 23 30 5 10 18 8 13 21 11 16 24 14 19 27 17 22 30 20 25 0 23 28 3 26 31 6 29 1 9 32 4 12 6 11 19 9 14 22 12 17 25 15 20 28 18 23 31 21 26 1 24 29 4 27 32 7 30 2 10 0 5 13 4 11 27 7 14 30 10 17 0 13 20 3 16 23 6 19 26 9 22 29 12 25 32 15 28 2 18 31 5 21 6 13 29 9 16 32 12 19 2 15 22 5 18 25 8 21 28 11 24 31 14 27 1 17 30 4 20 0 7 23 4 15 26 7 18 29 10 21 32 13 24 2 16 27 5 19 30 8 22 0 11 25 3 14 28 6 17 31 9 20
system_cnt = 10
KTS number 10 is not 3-colourable
\fi

4-colouring: {\tt 111122211112222333314423343434444}
%CC sizes:  9  8  8  8
%KTS 10 is 4-chromatic

\item
\begin{tabbing}
$\cP=\big\{$\=
$\{ 1, 2, 5\},\{ 14, 15, 18\},\{ 9, 10, 13\},\{ 12, 31, 0\},\{ 4, 23, 25\},\{ 8, 27, 29\}$,
\\ \> $\{ 7, 16, 22\},\{ 11, 20, 26\},\{ 3, 21, 30\},\{ 19, 24, 32\},\{ 6, 17, 28\}
\big\}$
\end{tabbing}

$\cS=\big\{ \{3, 8, 16\},\{ 2, 7, 15\},\{ 1, 8, 18\},\{ 2, 9, 19\},\{ 3, 10, 20\} \big\}$
\iffalse
 4 5 8 7 8 11 10 11 14 13 14 17 16 17 20 19 20 23 22 23 26 25 26 29 28 29 32 31 32 2 17 18 21 20 21 24 23 24 27 26 27 30 29 30 0 32 0 3 2 3 6 5 6 9 8 9 12 11 12 15 12 13 16 15 16 19 18 19 22 21 22 25 24 25 28 27 28 31 30 31 1 0 1 4 3 4 7 6 7 10 15 1 3 18 4 6 21 7 9 24 10 12 27 13 15 30 16 18 0 19 21 3 22 24 6 25 27 9 28 30 7 26 28 10 29 31 13 32 1 16 2 4 19 5 7 22 8 10 25 11 13 28 14 16 31 17 19 1 20 22 11 30 32 14 0 2 17 3 5 20 6 8 23 9 11 26 12 14 29 15 17 32 18 20 2 21 23 5 24 26 10 19 25 13 22 28 16 25 31 19 28 1 22 31 4 25 1 7 28 4 10 31 7 13 1 10 16 4 13 19 14 23 29 17 26 32 20 29 2 23 32 5 26 2 8 29 5 11 32 8 14 2 11 17 5 14 20 8 17 23 6 24 0 9 27 3 12 30 6 15 0 9 18 3 12 21 6 15 24 9 18 27 12 21 30 15 24 0 18 27 22 27 2 25 30 5 28 0 8 31 3 11 1 6 14 4 9 17 7 12 20 10 15 23 13 18 26 16 21 29 9 20 31 12 23 1 15 26 4 18 29 7 21 32 10 24 2 13 27 5 16 30 8 19 0 11 22 3 14 25 6 11 19 9 14 22 12 17 25 15 20 28 18 23 31 21 26 1 24 29 4 27 32 7 30 2 10 0 5 13 5 10 18 8 13 21 11 16 24 14 19 27 17 22 30 20 25 0 23 28 3 26 31 6 29 1 9 32 4 12 4 11 21 7 14 24 10 17 27 13 20 30 16 23 0 19 26 3 22 29 6 25 32 9 28 2 12 31 5 15 5 12 22 8 15 25 11 18 28 14 21 31 17 24 1 20 27 4 23 30 7 26 0 10 29 3 13 32 6 16 6 13 23 9 16 26 12 19 29 15 22 32 18 25 2 21 28 5 24 31 8 27 1 11 30 4 14 0 7 17
system_cnt = 11
KTS number 11 is not 3-colourable
\fi

4-colouring: {\tt 111122211112222333344431233434444}
%CC sizes:  9  8  8  8
%KTS 11 is 4-chromatic

\item
\begin{tabbing}
$\cP=\big\{$\=
$\{ 1, 2, 5\},\{ 20, 21, 24\},\{ 9, 10, 13\},\{ 6, 25, 27\},\{ 17, 19, 31\},\{ 11, 30, 32\}$,
\\ \> $\{ 7, 16, 22\},\{ 14, 23, 29\},\{ 3, 12, 18\},\{ 8, 28, 0\},\{ 4, 15, 26\}
\big\}$
\end{tabbing}

$\cS=\big\{ \{3, 8, 16\},\{ 2, 7, 15\},\{ 1, 8, 18\},\{ 2, 9, 19\},\{ 3, 10, 20\} \big\}$
\iffalse
 4 5 8 7 8 11 10 11 14 13 14 17 16 17 20 19 20 23 22 23 26 25 26 29 28 29 32 31 32 2 23 24 27 26 27 30 29 30 0 32 0 3 2 3 6 5 6 9 8 9 12 11 12 15 14 15 18 17 18 21 12 13 16 15 16 19 18 19 22 21 22 25 24 25 28 27 28 31 30 31 1 0 1 4 3 4 7 6 7 10 9 28 30 12 31 0 15 1 3 18 4 6 21 7 9 24 10 12 27 13 15 30 16 18 0 19 21 3 22 24 20 22 1 23 25 4 26 28 7 29 31 10 32 1 13 2 4 16 5 7 19 8 10 22 11 13 25 14 16 28 14 0 2 17 3 5 20 6 8 23 9 11 26 12 14 29 15 17 32 18 20 2 21 23 5 24 26 8 27 29 10 19 25 13 22 28 16 25 31 19 28 1 22 31 4 25 1 7 28 4 10 31 7 13 1 10 16 4 13 19 17 26 32 20 29 2 23 32 5 26 2 8 29 5 11 32 8 14 2 11 17 5 14 20 8 17 23 11 20 26 6 15 21 9 18 24 12 21 27 15 24 30 18 27 0 21 30 3 24 0 6 27 3 9 30 6 12 0 9 15 11 31 3 14 1 6 17 4 9 20 7 12 23 10 15 26 13 18 29 16 21 32 19 24 2 22 27 5 25 30 7 18 29 10 21 32 13 24 2 16 27 5 19 30 8 22 0 11 25 3 14 28 6 17 31 9 20 1 12 23 6 11 19 9 14 22 12 17 25 15 20 28 18 23 31 21 26 1 24 29 4 27 32 7 30 2 10 0 5 13 5 10 18 8 13 21 11 16 24 14 19 27 17 22 30 20 25 0 23 28 3 26 31 6 29 1 9 32 4 12 4 11 21 7 14 24 10 17 27 13 20 30 16 23 0 19 26 3 22 29 6 25 32 9 28 2 12 31 5 15 5 12 22 8 15 25 11 18 28 14 21 31 17 24 1 20 27 4 23 30 7 26 0 10 29 3 13 32 6 16 6 13 23 9 16 26 12 19 29 15 22 32 18 25 2 21 28 5 24 31 8 27 1 11 30 4 14 0 7 17
system_cnt = 12
KTS number 12 is not 3-colourable
\fi

4-colouring: {\tt 111122211112222333344431233434444}
%CC sizes:  9  8  8  8
%KTS 12 is 4-chromatic

\item
\begin{tabbing}
$\cP=\big\{$\=
$\{ 1, 2, 5\},\{ 14, 15, 18\},\{ 9, 10, 13\},\{ 31, 0, 12\},\{ 23, 25, 4\},\{ 27, 29, 8\}$,
\\ \> $\{ 16, 22, 7\},\{ 20, 26, 11\},\{ 30, 3, 21\},\{ 19, 24, 32\},\{ 28, 6, 17\}
\big\}$
\end{tabbing}

$\cS=\big\{ \{3, 8, 16\},\{ 2, 7, 15\},\{ 1, 8, 24\},\{ 2, 9, 25\},\{ 3, 10, 26\} \big\}$
\iffalse
 4 5 8 7 8 11 10 11 14 13 14 17 16 17 20 19 20 23 22 23 26 25 26 29 28 29 32 31 32 2 17 18 21 20 21 24 23 24 27 26 27 30 29 30 0 32 0 3 2 3 6 5 6 9 8 9 12 11 12 15 12 13 16 15 16 19 18 19 22 21 22 25 24 25 28 27 28 31 30 31 1 0 1 4 3 4 7 6 7 10 1 3 15 4 6 18 7 9 21 10 12 24 13 15 27 16 18 30 19 21 0 22 24 3 25 27 6 28 30 9 26 28 7 29 31 10 32 1 13 2 4 16 5 7 19 8 10 22 11 13 25 14 16 28 17 19 31 20 22 1 30 32 11 0 2 14 3 5 17 6 8 20 9 11 23 12 14 26 15 17 29 18 20 32 21 23 2 24 26 5 19 25 10 22 28 13 25 31 16 28 1 19 31 4 22 1 7 25 4 10 28 7 13 31 10 16 1 13 19 4 23 29 14 26 32 17 29 2 20 32 5 23 2 8 26 5 11 29 8 14 32 11 17 2 14 20 5 17 23 8 0 6 24 3 9 27 6 12 30 9 15 0 12 18 3 15 21 6 18 24 9 21 27 12 24 30 15 27 0 18 22 27 2 25 30 5 28 0 8 31 3 11 1 6 14 4 9 17 7 12 20 10 15 23 13 18 26 16 21 29 31 9 20 1 12 23 4 15 26 7 18 29 10 21 32 13 24 2 16 27 5 19 30 8 22 0 11 25 3 14 6 11 19 9 14 22 12 17 25 15 20 28 18 23 31 21 26 1 24 29 4 27 32 7 30 2 10 0 5 13 5 10 18 8 13 21 11 16 24 14 19 27 17 22 30 20 25 0 23 28 3 26 31 6 29 1 9 32 4 12 4 11 27 7 14 30 10 17 0 13 20 3 16 23 6 19 26 9 22 29 12 25 32 15 28 2 18 31 5 21 5 12 28 8 15 31 11 18 1 14 21 4 17 24 7 20 27 10 23 30 13 26 0 16 29 3 19 32 6 22 6 13 29 9 16 32 12 19 2 15 22 5 18 25 8 21 28 11 24 31 14 27 1 17 30 4 20 0 7 23
system_cnt = 13
KTS number 13 is not 3-colourable
\fi

4-colouring: {\tt 111122211112222333144323343434444}
%CC sizes:  9  8  8  8
%KTS 13 is 4-chromatic

\item
\begin{tabbing}
$\cP=\big\{$\=
$\{ 1, 2, 5\},\{ 20, 21, 24\},\{ 9, 10, 13\},\{ 25, 27, 6\},\{ 17, 19, 31\},\{ 30, 32, 11\}$,
\\ \> $\{ 16, 22, 7\},\{ 23, 29, 14\},\{ 12, 18, 3\},\{ 28, 0, 8\},\{ 4, 15, 26\}
\big\}$
\end{tabbing}

$\cS=\big\{ \{3, 8, 16\},\{ 2, 7, 15\},\{ 1, 8, 24\},\{ 2, 9, 25\},\{ 3, 10, 26\} \big\}$
\iffalse
 4 5 8 7 8 11 10 11 14 13 14 17 16 17 20 19 20 23 22 23 26 25 26 29 28 29 32 31 32 2 23 24 27 26 27 30 29 30 0 32 0 3 2 3 6 5 6 9 8 9 12 11 12 15 14 15 18 17 18 21 12 13 16 15 16 19 18 19 22 21 22 25 24 25 28 27 28 31 30 31 1 0 1 4 3 4 7 6 7 10 28 30 9 31 0 12 1 3 15 4 6 18 7 9 21 10 12 24 13 15 27 16 18 30 19 21 0 22 24 3 20 22 1 23 25 4 26 28 7 29 31 10 32 1 13 2 4 16 5 7 19 8 10 22 11 13 25 14 16 28 0 2 14 3 5 17 6 8 20 9 11 23 12 14 26 15 17 29 18 20 32 21 23 2 24 26 5 27 29 8 19 25 10 22 28 13 25 31 16 28 1 19 31 4 22 1 7 25 4 10 28 7 13 31 10 16 1 13 19 4 26 32 17 29 2 20 32 5 23 2 8 26 5 11 29 8 14 32 11 17 2 14 20 5 17 23 8 20 26 11 15 21 6 18 24 9 21 27 12 24 30 15 27 0 18 30 3 21 0 6 24 3 9 27 6 12 30 9 15 0 31 3 11 1 6 14 4 9 17 7 12 20 10 15 23 13 18 26 16 21 29 19 24 32 22 27 2 25 30 5 7 18 29 10 21 32 13 24 2 16 27 5 19 30 8 22 0 11 25 3 14 28 6 17 31 9 20 1 12 23 6 11 19 9 14 22 12 17 25 15 20 28 18 23 31 21 26 1 24 29 4 27 32 7 30 2 10 0 5 13 5 10 18 8 13 21 11 16 24 14 19 27 17 22 30 20 25 0 23 28 3 26 31 6 29 1 9 32 4 12 4 11 27 7 14 30 10 17 0 13 20 3 16 23 6 19 26 9 22 29 12 25 32 15 28 2 18 31 5 21 5 12 28 8 15 31 11 18 1 14 21 4 17 24 7 20 27 10 23 30 13 26 0 16 29 3 19 32 6 22 6 13 29 9 16 32 12 19 2 15 22 5 18 25 8 21 28 11 24 31 14 27 1 17 30 4 20 0 7 23
system_cnt = 14
KTS number 14 is not 3-colourable
\fi

4-colouring: {\tt 111122211112222333144323343434444}
%CC sizes:  9  8  8  8
%KTS 14 is 4-chromatic

\item
\begin{tabbing}
$\cP=\big\{$\=
$\{ 1, 2, 5\},\{ 17, 18, 21\},\{ 9, 10, 13\},\{ 28, 30, 16\},\{ 23, 25, 11\},\{ 27, 29, 15\}$,
\\ \> $\{ 31, 4, 22\},\{ 8, 14, 32\},\{ 0, 6, 24\},\{ 7, 12, 20\},\{ 19, 26, 3\}
\big\}$
\end{tabbing}

$\cS=\big\{ \{3, 8, 16\},\{ 2, 7, 15\},\{ 2, 9, 19\},\{ 3, 10, 20\},\{ 1, 12, 23\} \big\}$
\iffalse
 4 5 8 7 8 11 10 11 14 13 14 17 16 17 20 19 20 23 22 23 26 25 26 29 28 29 32 31 32 2 20 21 24 23 24 27 26 27 30 29 30 0 32 0 3 2 3 6 5 6 9 8 9 12 11 12 15 14 15 18 12 13 16 15 16 19 18 19 22 21 22 25 24 25 28 27 28 31 30 31 1 0 1 4 3 4 7 6 7 10 31 0 19 1 3 22 4 6 25 7 9 28 10 12 31 13 15 1 16 18 4 19 21 7 22 24 10 25 27 13 26 28 14 29 31 17 32 1 20 2 4 23 5 7 26 8 10 29 11 13 32 14 16 2 17 19 5 20 22 8 30 32 18 0 2 21 3 5 24 6 8 27 9 11 30 12 14 0 15 17 3 18 20 6 21 23 9 24 26 12 1 7 25 4 10 28 7 13 31 10 16 1 13 19 4 16 22 7 19 25 10 22 28 13 25 31 16 28 1 19 11 17 2 14 20 5 17 23 8 20 26 11 23 29 14 26 32 17 29 2 20 32 5 23 2 8 26 5 11 29 3 9 27 6 12 30 9 15 0 12 18 3 15 21 6 18 24 9 21 27 12 24 30 15 27 0 18 30 3 21 10 15 23 13 18 26 16 21 29 19 24 32 22 27 2 25 30 5 28 0 8 31 3 11 1 6 14 4 9 17 22 29 6 25 32 9 28 2 12 31 5 15 1 8 18 4 11 21 7 14 24 10 17 27 13 20 30 16 23 0 6 11 19 9 14 22 12 17 25 15 20 28 18 23 31 21 26 1 24 29 4 27 32 7 30 2 10 0 5 13 5 10 18 8 13 21 11 16 24 14 19 27 17 22 30 20 25 0 23 28 3 26 31 6 29 1 9 32 4 12 5 12 22 8 15 25 11 18 28 14 21 31 17 24 1 20 27 4 23 30 7 26 0 10 29 3 13 32 6 16 6 13 23 9 16 26 12 19 29 15 22 32 18 25 2 21 28 5 24 31 8 27 1 11 30 4 14 0 7 17 4 15 26 7 18 29 10 21 32 13 24 2 16 27 5 19 30 8 22 0 11 25 3 14 28 6 17 31 9 20
system_cnt = 15
KTS number 15 is not 3-colourable
\fi

4-colouring: {\tt 111122211112221332243343434344443}
%CC sizes:  9  8  8  8
%KTS 15 is 4-chromatic

\item
\begin{tabbing}
$\cP=\big\{$\=
$\{ 1, 2, 5\},\{ 8, 9, 12\},\{ 24, 25, 28\},\{ 31, 0, 19\},\{ 11, 13, 32\},\{ 18, 20, 6\}$,
\\ \> $\{ 16, 22, 7\},\{ 23, 29, 14\},\{ 30, 3, 21\},\{ 10, 17, 27\},\{ 4, 15, 26\}
\big\}$
\end{tabbing}

$\cS=\big\{ \{3, 8, 16\},\{ 2, 7, 15\},\{ 1, 6, 14\},\{ 3, 10, 20\},\{ 2, 9, 19\} \big\}$
\iffalse
 4 5 8 7 8 11 10 11 14 13 14 17 16 17 20 19 20 23 22 23 26 25 26 29 28 29 32 31 32 2 11 12 15 14 15 18 17 18 21 20 21 24 23 24 27 26 27 30 29 30 0 32 0 3 2 3 6 5 6 9 27 28 31 30 31 1 0 1 4 3 4 7 6 7 10 9 10 13 12 13 16 15 16 19 18 19 22 21 22 25 1 3 22 4 6 25 7 9 28 10 12 31 13 15 1 16 18 4 19 21 7 22 24 10 25 27 13 28 30 16 14 16 2 17 19 5 20 22 8 23 25 11 26 28 14 29 31 17 32 1 20 2 4 23 5 7 26 8 10 29 21 23 9 24 26 12 27 29 15 30 32 18 0 2 21 3 5 24 6 8 27 9 11 30 12 14 0 15 17 3 19 25 10 22 28 13 25 31 16 28 1 19 31 4 22 1 7 25 4 10 28 7 13 31 10 16 1 13 19 4 26 32 17 29 2 20 32 5 23 2 8 26 5 11 29 8 14 32 11 17 2 14 20 5 17 23 8 20 26 11 0 6 24 3 9 27 6 12 30 9 15 0 12 18 3 15 21 6 18 24 9 21 27 12 24 30 15 27 0 18 13 20 30 16 23 0 19 26 3 22 29 6 25 32 9 28 2 12 31 5 15 1 8 18 4 11 21 7 14 24 7 18 29 10 21 32 13 24 2 16 27 5 19 30 8 22 0 11 25 3 14 28 6 17 31 9 20 1 12 23 6 11 19 9 14 22 12 17 25 15 20 28 18 23 31 21 26 1 24 29 4 27 32 7 30 2 10 0 5 13 5 10 18 8 13 21 11 16 24 14 19 27 17 22 30 20 25 0 23 28 3 26 31 6 29 1 9 32 4 12 4 9 17 7 12 20 10 15 23 13 18 26 16 21 29 19 24 32 22 27 2 25 30 5 28 0 8 31 3 11 6 13 23 9 16 26 12 19 29 15 22 32 18 25 2 21 28 5 24 31 8 27 1 11 30 4 14 0 7 17 5 12 22 8 15 25 11 18 28 14 21 31 17 24 1 20 27 4 23 30 7 26 0 10 29 3 13 32 6 16
system_cnt = 16
KTS number 16 is not 3-colourable
\fi

4-colouring: {\tt 111122211112221332243343434344443}
%CC sizes:  9  8  8  8
%KTS 16 is 4-chromatic

\item
\begin{tabbing}
$\cP=\big\{$\=
$\{ 1, 2, 5\},\{ 20, 21, 24\},\{ 15, 16, 19\},\{ 7, 9, 28\},\{ 11, 13, 32\},\{ 6, 8, 27\}$,
\\ \> $\{ 31, 4, 22\},\{ 23, 29, 14\},\{ 12, 18, 3\},\{ 25, 30, 17\},\{ 26, 0, 10\}
\big\}$
\end{tabbing}

$\cS=\big\{ \{3, 8, 28\},\{ 2, 7, 27\},\{ 1, 8, 18\},\{ 3, 10, 20\},\{ 1, 12, 23\} \big\}$
\iffalse
 4 5 8 7 8 11 10 11 14 13 14 17 16 17 20 19 20 23 22 23 26 25 26 29 28 29 32 31 32 2 23 24 27 26 27 30 29 30 0 32 0 3 2 3 6 5 6 9 8 9 12 11 12 15 14 15 18 17 18 21 18 19 22 21 22 25 24 25 28 27 28 31 30 31 1 0 1 4 3 4 7 6 7 10 9 10 13 12 13 16 10 12 31 13 15 1 16 18 4 19 21 7 22 24 10 25 27 13 28 30 16 31 0 19 1 3 22 4 6 25 14 16 2 17 19 5 20 22 8 23 25 11 26 28 14 29 31 17 32 1 20 2 4 23 5 7 26 8 10 29 9 11 30 12 14 0 15 17 3 18 20 6 21 23 9 24 26 12 27 29 15 30 32 18 0 2 21 3 5 24 1 7 25 4 10 28 7 13 31 10 16 1 13 19 4 16 22 7 19 25 10 22 28 13 25 31 16 28 1 19 26 32 17 29 2 20 32 5 23 2 8 26 5 11 29 8 14 32 11 17 2 14 20 5 17 23 8 20 26 11 15 21 6 18 24 9 21 27 12 24 30 15 27 0 18 30 3 21 0 6 24 3 9 27 6 12 30 9 15 0 28 0 20 31 3 23 1 6 26 4 9 29 7 12 32 10 15 2 13 18 5 16 21 8 19 24 11 22 27 14 29 3 13 32 6 16 2 9 19 5 12 22 8 15 25 11 18 28 14 21 31 17 24 1 20 27 4 23 30 7 6 11 31 9 14 1 12 17 4 15 20 7 18 23 10 21 26 13 24 29 16 27 32 19 30 2 22 0 5 25 5 10 30 8 13 0 11 16 3 14 19 6 17 22 9 20 25 12 23 28 15 26 31 18 29 1 21 32 4 24 4 11 21 7 14 24 10 17 27 13 20 30 16 23 0 19 26 3 22 29 6 25 32 9 28 2 12 31 5 15 6 13 23 9 16 26 12 19 29 15 22 32 18 25 2 21 28 5 24 31 8 27 1 11 30 4 14 0 7 17 4 15 26 7 18 29 10 21 32 13 24 2 16 27 5 19 30 8 22 0 11 25 3 14 28 6 17 31 9 20
system_cnt = 17
KTS number 17 is not 3-colourable
\fi

4-colouring: {\tt 111122211112222333344431234334444}
%CC sizes:  9  8  8  8
%KTS 17 is 4-chromatic

\item
\begin{tabbing}
$\cP=\big\{$\=
$\{ 1, 2, 5\},\{ 8, 9, 12\},\{ 24, 25, 28\},\{ 31, 0, 19\},\{ 11, 13, 32\},\{ 18, 20, 6\}$,
\\ \> $\{ 16, 22, 7\},\{ 23, 29, 14\},\{ 30, 3, 21\},\{ 10, 17, 27\},\{ 4, 15, 26\}
\big\}$
\end{tabbing}

$\cS=\big\{ \{3, 8, 28\},\{ 2, 7, 27\},\{ 1, 6, 26\},\{ 3, 10, 20\},\{ 2, 9, 19\} \big\}$
\iffalse
 4 5 8 7 8 11 10 11 14 13 14 17 16 17 20 19 20 23 22 23 26 25 26 29 28 29 32 31 32 2 11 12 15 14 15 18 17 18 21 20 21 24 23 24 27 26 27 30 29 30 0 32 0 3 2 3 6 5 6 9 27 28 31 30 31 1 0 1 4 3 4 7 6 7 10 9 10 13 12 13 16 15 16 19 18 19 22 21 22 25 1 3 22 4 6 25 7 9 28 10 12 31 13 15 1 16 18 4 19 21 7 22 24 10 25 27 13 28 30 16 14 16 2 17 19 5 20 22 8 23 25 11 26 28 14 29 31 17 32 1 20 2 4 23 5 7 26 8 10 29 21 23 9 24 26 12 27 29 15 30 32 18 0 2 21 3 5 24 6 8 27 9 11 30 12 14 0 15 17 3 19 25 10 22 28 13 25 31 16 28 1 19 31 4 22 1 7 25 4 10 28 7 13 31 10 16 1 13 19 4 26 32 17 29 2 20 32 5 23 2 8 26 5 11 29 8 14 32 11 17 2 14 20 5 17 23 8 20 26 11 0 6 24 3 9 27 6 12 30 9 15 0 12 18 3 15 21 6 18 24 9 21 27 12 24 30 15 27 0 18 13 20 30 16 23 0 19 26 3 22 29 6 25 32 9 28 2 12 31 5 15 1 8 18 4 11 21 7 14 24 7 18 29 10 21 32 13 24 2 16 27 5 19 30 8 22 0 11 25 3 14 28 6 17 31 9 20 1 12 23 6 11 31 9 14 1 12 17 4 15 20 7 18 23 10 21 26 13 24 29 16 27 32 19 30 2 22 0 5 25 5 10 30 8 13 0 11 16 3 14 19 6 17 22 9 20 25 12 23 28 15 26 31 18 29 1 21 32 4 24 4 9 29 7 12 32 10 15 2 13 18 5 16 21 8 19 24 11 22 27 14 25 30 17 28 0 20 31 3 23 6 13 23 9 16 26 12 19 29 15 22 32 18 25 2 21 28 5 24 31 8 27 1 11 30 4 14 0 7 17 5 12 22 8 15 25 11 18 28 14 21 31 17 24 1 20 27 4 23 30 7 26 0 10 29 3 13 32 6 16
system_cnt = 18
KTS number 18 is not 3-colourable
\fi

4-colouring: {\tt 111122211112222333344431234334444}
%CC sizes:  9  8  8  8
%KTS 18 is 4-chromatic

\item
\begin{tabbing}
$\cP=\big\{$\=
$\{ 1, 3, 16\},\{ 8, 10, 23\},\{ 2, 0, 15\},\{ 4, 25, 28\},\{ 14, 17, 26\},\{ 6, 27, 30\}$,
\\ \> $\{ 7, 13, 21\},\{ 5, 11, 19\},\{ 18, 24, 32\},\{ 12, 22, 29\},\{ 9, 20, 31\}
\big\}$
\end{tabbing}

$\cS=\big\{ \{3, 10, 26\},\{ 2, 9, 25\},\{ 1, 2, 6\},\{ 2, 3, 7\},\{ 3, 4, 8\} \big\}$
\iffalse
 4 6 19 7 9 22 10 12 25 13 15 28 16 18 31 19 21 1 22 24 4 25 27 7 28 30 10 31 0 13 11 13 26 14 16 29 17 19 32 20 22 2 23 25 5 26 28 8 29 31 11 32 1 14 2 4 17 5 7 20 5 3 18 8 6 21 11 9 24 14 12 27 17 15 30 20 18 0 23 21 3 26 24 6 29 27 9 32 30 12 7 28 31 10 31 1 13 1 4 16 4 7 19 7 10 22 10 13 25 13 16 28 16 19 31 19 22 1 22 25 17 20 29 20 23 32 23 26 2 26 29 5 29 32 8 32 2 11 2 5 14 5 8 17 8 11 20 11 14 23 9 30 0 12 0 3 15 3 6 18 6 9 21 9 12 24 12 15 27 15 18 30 18 21 0 21 24 3 24 27 10 16 24 13 19 27 16 22 30 19 25 0 22 28 3 25 31 6 28 1 9 31 4 12 1 7 15 4 10 18 8 14 22 11 17 25 14 20 28 17 23 31 20 26 1 23 29 4 26 32 7 29 2 10 32 5 13 2 8 16 21 27 2 24 30 5 27 0 8 30 3 11 0 6 14 3 9 17 6 12 20 9 15 23 12 18 26 15 21 29 15 25 32 18 28 2 21 31 5 24 1 8 27 4 11 30 7 14 0 10 17 3 13 20 6 16 23 9 19 26 12 23 1 15 26 4 18 29 7 21 32 10 24 2 13 27 5 16 30 8 19 0 11 22 3 14 25 6 17 28 6 13 29 9 16 32 12 19 2 15 22 5 18 25 8 21 28 11 24 31 14 27 1 17 30 4 20 0 7 23 5 12 28 8 15 31 11 18 1 14 21 4 17 24 7 20 27 10 23 30 13 26 0 16 29 3 19 32 6 22 4 5 9 7 8 12 10 11 15 13 14 18 16 17 21 19 20 24 22 23 27 25 26 30 28 29 0 31 32 3 5 6 10 8 9 13 11 12 16 14 15 19 17 18 22 20 21 25 23 24 28 26 27 31 29 30 1 32 0 4 6 7 11 9 10 14 12 13 17 15 16 20 18 19 23 21 22 26 24 25 29 27 28 32 30 31 2 0 1 5
system_cnt = 19
KTS number 19 is not 3-colourable
\fi

4-colouring: {\tt 111112222111322223314343334344444}
%CC sizes:  9  8  8  8
%KTS 19 is 4-chromatic

\item
\begin{tabbing}
$\cP=\big\{$\=
$\{ 1, 3, 16\},\{ 2, 4, 17\},\{ 9, 11, 24\},\{ 7, 10, 19\},\{ 8, 29, 32\},\{ 15, 18, 27\}$,
\\ \> $\{ 22, 28, 14\},\{ 20, 26, 12\},\{ 6, 25, 0\},\{ 5, 21, 31\},\{ 13, 23, 30\}
\big\}$
\end{tabbing}

$\cS=\big\{ \{3, 10, 26\},\{ 1, 2, 6\},\{ 2, 3, 7\},\{ 3, 4, 8\},\{ 1, 12, 23\} \big\}$
\iffalse
 4 6 19 7 9 22 10 12 25 13 15 28 16 18 31 19 21 1 22 24 4 25 27 7 28 30 10 31 0 13 5 7 20 8 10 23 11 13 26 14 16 29 17 19 32 20 22 2 23 25 5 26 28 8 29 31 11 32 1 14 12 14 27 15 17 30 18 20 0 21 23 3 24 26 6 27 29 9 30 32 12 0 2 15 3 5 18 6 8 21 10 13 22 13 16 25 16 19 28 19 22 31 22 25 1 25 28 4 28 31 7 31 1 10 1 4 13 4 7 16 11 32 2 14 2 5 17 5 8 20 8 11 23 11 14 26 14 17 29 17 20 32 20 23 2 23 26 5 26 29 18 21 30 21 24 0 24 27 3 27 30 6 30 0 9 0 3 12 3 6 15 6 9 18 9 12 21 12 15 24 25 31 17 28 1 20 31 4 23 1 7 26 4 10 29 7 13 32 10 16 2 13 19 5 16 22 8 19 25 11 23 29 15 26 32 18 29 2 21 32 5 24 2 8 27 5 11 30 8 14 0 11 17 3 14 20 6 17 23 9 9 28 3 12 31 6 15 1 9 18 4 12 21 7 15 24 10 18 27 13 21 30 16 24 0 19 27 3 22 30 8 24 1 11 27 4 14 30 7 17 0 10 20 3 13 23 6 16 26 9 19 29 12 22 32 15 25 2 18 28 16 26 0 19 29 3 22 32 6 25 2 9 28 5 12 31 8 15 1 11 18 4 14 21 7 17 24 10 20 27 6 13 29 9 16 32 12 19 2 15 22 5 18 25 8 21 28 11 24 31 14 27 1 17 30 4 20 0 7 23 4 5 9 7 8 12 10 11 15 13 14 18 16 17 21 19 20 24 22 23 27 25 26 30 28 29 0 31 32 3 5 6 10 8 9 13 11 12 16 14 15 19 17 18 22 20 21 25 23 24 28 26 27 31 29 30 1 32 0 4 6 7 11 9 10 14 12 13 17 15 16 20 18 19 23 21 22 26 24 25 29 27 28 32 30 31 2 0 1 5 4 15 26 7 18 29 10 21 32 13 24 2 16 27 5 19 30 8 22 0 11 25 3 14 28 6 17 31 9 20
system_cnt = 20
KTS number 20 is not 3-colourable
\fi

4-colouring: {\tt 111112222111322223314343334344444}
%CC sizes:  9  8  8  8
%KTS 20 is 4-chromatic

\item
\begin{tabbing}
$\cP=\big\{$\=
$\{ 1, 3, 16\},\{ 2, 4, 17\},\{ 27, 29, 9\},\{ 7, 10, 19\},\{ 8, 11, 20\},\{ 12, 15, 24\}$,
\\ \> $\{ 22, 28, 14\},\{ 26, 32, 18\},\{ 31, 5, 21\},\{ 23, 30, 13\},\{ 0, 6, 25\}
\big\}$
\end{tabbing}

$\cS=\big\{ \{1, 2, 6\},\{ 2, 3, 7\},\{ 3, 4, 8\},\{ 1, 12, 23\},\{ 3, 10, 26\} \big\}$
\iffalse
 4 6 19 7 9 22 10 12 25 13 15 28 16 18 31 19 21 1 22 24 4 25 27 7 28 30 10 31 0 13 5 7 20 8 10 23 11 13 26 14 16 29 17 19 32 20 22 2 23 25 5 26 28 8 29 31 11 32 1 14 30 32 12 0 2 15 3 5 18 6 8 21 9 11 24 12 14 27 15 17 30 18 20 0 21 23 3 24 26 6 10 13 22 13 16 25 16 19 28 19 22 31 22 25 1 25 28 4 28 31 7 31 1 10 1 4 13 4 7 16 11 14 23 14 17 26 17 20 29 20 23 32 23 26 2 26 29 5 29 32 8 32 2 11 2 5 14 5 8 17 15 18 27 18 21 30 21 24 0 24 27 3 27 30 6 30 0 9 0 3 12 3 6 15 6 9 18 9 12 21 25 31 17 28 1 20 31 4 23 1 7 26 4 10 29 7 13 32 10 16 2 13 19 5 16 22 8 19 25 11 29 2 21 32 5 24 2 8 27 5 11 30 8 14 0 11 17 3 14 20 6 17 23 9 20 26 12 23 29 15 1 8 24 4 11 27 7 14 30 10 17 0 13 20 3 16 23 6 19 26 9 22 29 12 25 32 15 28 2 18 26 0 16 29 3 19 32 6 22 2 9 25 5 12 28 8 15 31 11 18 1 14 21 4 17 24 7 20 27 10 3 9 28 6 12 31 9 15 1 12 18 4 15 21 7 18 24 10 21 27 13 24 30 16 27 0 19 30 3 22 4 5 9 7 8 12 10 11 15 13 14 18 16 17 21 19 20 24 22 23 27 25 26 30 28 29 0 31 32 3 5 6 10 8 9 13 11 12 16 14 15 19 17 18 22 20 21 25 23 24 28 26 27 31 29 30 1 32 0 4 6 7 11 9 10 14 12 13 17 15 16 20 18 19 23 21 22 26 24 25 29 27 28 32 30 31 2 0 1 5 4 15 26 7 18 29 10 21 32 13 24 2 16 27 5 19 30 8 22 0 11 25 3 14 28 6 17 31 9 20 6 13 29 9 16 32 12 19 2 15 22 5 18 25 8 21 28 11 24 31 14 27 1 17 30 4 20 0 7 23
system_cnt = 21
KTS number 21 is not 3-colourable
\fi

4-colouring: {\tt 111112222111322223314343334344444}
%CC sizes:  9  8  8  8
%KTS 21 is 4-chromatic

\item
\begin{tabbing}
$\cP=\big\{$\=
$\{ 1, 3, 16\},\{ 8, 26, 28\},\{ 9, 11, 24\},\{ 19, 22, 31\},\{ 2, 5, 14\},\{ 6, 27, 30\}$,
\\ \> $\{ 7, 13, 32\},\{ 15, 23, 29\},\{ 4, 12, 18\},\{ 10, 17, 0\},\{ 20, 21, 25\}
\big\}$
\end{tabbing}

$\cS=\big\{ \{3, 10, 26\},\{ 2, 9, 25\},\{ 1, 2, 6\},\{ 3, 4, 8\},\{ 1, 12, 23\} \big\}$
\iffalse
 4 6 19 7 9 22 10 12 25 13 15 28 16 18 31 19 21 1 22 24 4 25 27 7 28 30 10 31 0 13 11 29 31 14 32 1 17 2 4 20 5 7 23 8 10 26 11 13 29 14 16 32 17 19 2 20 22 5 23 25 12 14 27 15 17 30 18 20 0 21 23 3 24 26 6 27 29 9 30 32 12 0 2 15 3 5 18 6 8 21 22 25 1 25 28 4 28 31 7 31 1 10 1 4 13 4 7 16 7 10 19 10 13 22 13 16 25 16 19 28 5 8 17 8 11 20 11 14 23 14 17 26 17 20 29 20 23 32 23 26 2 26 29 5 29 32 8 32 2 11 9 30 0 12 0 3 15 3 6 18 6 9 21 9 12 24 12 15 27 15 18 30 18 21 0 21 24 3 24 27 10 16 2 13 19 5 16 22 8 19 25 11 22 28 14 25 31 17 28 1 20 31 4 23 1 7 26 4 10 29 18 26 32 21 29 2 24 32 5 27 2 8 30 5 11 0 8 14 3 11 17 6 14 20 9 17 23 12 20 26 7 15 21 10 18 24 13 21 27 16 24 30 19 27 0 22 30 3 25 0 6 28 3 9 31 6 12 1 9 15 13 20 3 16 23 6 19 26 9 22 29 12 25 32 15 28 2 18 31 5 21 1 8 24 4 11 27 7 14 30 23 24 28 26 27 31 29 30 1 32 0 4 2 3 7 5 6 10 8 9 13 11 12 16 14 15 19 17 18 22 6 13 29 9 16 32 12 19 2 15 22 5 18 25 8 21 28 11 24 31 14 27 1 17 30 4 20 0 7 23 5 12 28 8 15 31 11 18 1 14 21 4 17 24 7 20 27 10 23 30 13 26 0 16 29 3 19 32 6 22 4 5 9 7 8 12 10 11 15 13 14 18 16 17 21 19 20 24 22 23 27 25 26 30 28 29 0 31 32 3 6 7 11 9 10 14 12 13 17 15 16 20 18 19 23 21 22 26 24 25 29 27 28 32 30 31 2 0 1 5 4 15 26 7 18 29 10 21 32 13 24 2 16 27 5 19 30 8 22 0 11 25 3 14 28 6 17 31 9 20
system_cnt = 22
KTS number 22 is not 3-colourable
\fi

4-colouring: {\tt 111112222111322223314343334344444}
%CC sizes:  9  8  8  8
%KTS 22 is 4-chromatic

\item
\begin{tabbing}
$\cP=\big\{$\=
$\{ 1, 3, 16\},\{ 11, 13, 26\},\{ 15, 17, 30\},\{ 19, 22, 10\},\{ 32, 2, 23\},\{ 21, 24, 12\}$,
\\ \> $\{ 25, 31, 6\},\{ 14, 20, 28\},\{ 27, 0, 8\},\{ 4, 5, 9\},\{ 7, 18, 29\}
\big\}$
\end{tabbing}

$\cS=\big\{ \{3, 10, 20\},\{ 2, 9, 19\},\{ 1, 8, 18\},\{ 3, 4, 8\},\{ 2, 3, 7\} \big\}$
\iffalse
 4 6 19 7 9 22 10 12 25 13 15 28 16 18 31 19 21 1 22 24 4 25 27 7 28 30 10 31 0 13 14 16 29 17 19 32 20 22 2 23 25 5 26 28 8 29 31 11 32 1 14 2 4 17 5 7 20 8 10 23 18 20 0 21 23 3 24 26 6 27 29 9 30 32 12 0 2 15 3 5 18 6 8 21 9 11 24 12 14 27 22 25 13 25 28 16 28 31 19 31 1 22 1 4 25 4 7 28 7 10 31 10 13 1 13 16 4 16 19 7 2 5 26 5 8 29 8 11 32 11 14 2 14 17 5 17 20 8 20 23 11 23 26 14 26 29 17 29 32 20 24 27 15 27 30 18 30 0 21 0 3 24 3 6 27 6 9 30 9 12 0 12 15 3 15 18 6 18 21 9 28 1 9 31 4 12 1 7 15 4 10 18 7 13 21 10 16 24 13 19 27 16 22 30 19 25 0 22 28 3 17 23 31 20 26 1 23 29 4 26 32 7 29 2 10 32 5 13 2 8 16 5 11 19 8 14 22 11 17 25 30 3 11 0 6 14 3 9 17 6 12 20 9 15 23 12 18 26 15 21 29 18 24 32 21 27 2 24 30 5 7 8 12 10 11 15 13 14 18 16 17 21 19 20 24 22 23 27 25 26 30 28 29 0 31 32 3 1 2 6 10 21 32 13 24 2 16 27 5 19 30 8 22 0 11 25 3 14 28 6 17 31 9 20 1 12 23 4 15 26 6 13 23 9 16 26 12 19 29 15 22 32 18 25 2 21 28 5 24 31 8 27 1 11 30 4 14 0 7 17 5 12 22 8 15 25 11 18 28 14 21 31 17 24 1 20 27 4 23 30 7 26 0 10 29 3 13 32 6 16 4 11 21 7 14 24 10 17 27 13 20 30 16 23 0 19 26 3 22 29 6 25 32 9 28 2 12 31 5 15 6 7 11 9 10 14 12 13 17 15 16 20 18 19 23 21 22 26 24 25 29 27 28 32 30 31 2 0 1 5 5 6 10 8 9 13 11 12 16 14 15 19 17 18 22 20 21 25 23 24 28 26 27 31 29 30 1 32 0 4
system_cnt = 23
KTS number 23 is not 3-colourable
\fi

4-colouring: {\tt 111112222111322223333444432334444}
%CC sizes:  8  9  8  8
%KTS 23 is 4-chromatic

\item
\begin{tabbing}
$\cP=\big\{$\=
$\{ 1, 3, 16\},\{ 17, 19, 32\},\{ 9, 11, 24\},\{ 22, 25, 13\},\{ 5, 8, 29\},\{ 27, 30, 18\}$,
\\ \> $\{ 31, 4, 23\},\{ 14, 20, 6\},\{ 15, 21, 7\},\{ 28, 2, 12\},\{ 26, 0, 10\}
\big\}$
\end{tabbing}

$\cS=\big\{ \{3, 10, 20\},\{ 1, 2, 6\},\{ 2, 3, 7\},\{ 3, 4, 8\},\{ 1, 12, 23\} \big\}$
\iffalse
 4 6 19 7 9 22 10 12 25 13 15 28 16 18 31 19 21 1 22 24 4 25 27 7 28 30 10 31 0 13 20 22 2 23 25 5 26 28 8 29 31 11 32 1 14 2 4 17 5 7 20 8 10 23 11 13 26 14 16 29 12 14 27 15 17 30 18 20 0 21 23 3 24 26 6 27 29 9 30 32 12 0 2 15 3 5 18 6 8 21 25 28 16 28 31 19 31 1 22 1 4 25 4 7 28 7 10 31 10 13 1 13 16 4 16 19 7 19 22 10 8 11 32 11 14 2 14 17 5 17 20 8 20 23 11 23 26 14 26 29 17 29 32 20 32 2 23 2 5 26 30 0 21 0 3 24 3 6 27 6 9 30 9 12 0 12 15 3 15 18 6 18 21 9 21 24 12 24 27 15 1 7 26 4 10 29 7 13 32 10 16 2 13 19 5 16 22 8 19 25 11 22 28 14 25 31 17 28 1 20 17 23 9 20 26 12 23 29 15 26 32 18 29 2 21 32 5 24 2 8 27 5 11 30 8 14 0 11 17 3 18 24 10 21 27 13 24 30 16 27 0 19 30 3 22 0 6 25 3 9 28 6 12 31 9 15 1 12 18 4 31 5 15 1 8 18 4 11 21 7 14 24 10 17 27 13 20 30 16 23 0 19 26 3 22 29 6 25 32 9 29 3 13 32 6 16 2 9 19 5 12 22 8 15 25 11 18 28 14 21 31 17 24 1 20 27 4 23 30 7 6 13 23 9 16 26 12 19 29 15 22 32 18 25 2 21 28 5 24 31 8 27 1 11 30 4 14 0 7 17 4 5 9 7 8 12 10 11 15 13 14 18 16 17 21 19 20 24 22 23 27 25 26 30 28 29 0 31 32 3 5 6 10 8 9 13 11 12 16 14 15 19 17 18 22 20 21 25 23 24 28 26 27 31 29 30 1 32 0 4 6 7 11 9 10 14 12 13 17 15 16 20 18 19 23 21 22 26 24 25 29 27 28 32 30 31 2 0 1 5 4 15 26 7 18 29 10 21 32 13 24 2 16 27 5 19 30 8 22 0 11 25 3 14 28 6 17 31 9 20
system_cnt = 24
KTS number 24 is not 3-colourable
\fi

4-colouring: {\tt 111112222111322223133434434334444}
%CC sizes:  9  8  8  8
%KTS 24 is 4-chromatic

\item
\begin{tabbing}
$\cP=\big\{$\=
$\{ 1, 3, 16\},\{ 23, 25, 5\},\{ 0, 2, 15\},\{ 28, 31, 19\},\{ 8, 11, 32\},\{ 6, 9, 30\}$,
\\ \> $\{ 4, 10, 29\},\{ 20, 26, 12\},\{ 21, 27, 13\},\{ 7, 14, 24\},\{ 17, 18, 22\}
\big\}$
\end{tabbing}

$\cS=\big\{ \{3, 10, 20\},\{ 2, 9, 19\},\{ 1, 2, 6\},\{ 3, 4, 8\},\{ 1, 12, 23\} \big\}$
\iffalse
 4 6 19 7 9 22 10 12 25 13 15 28 16 18 31 19 21 1 22 24 4 25 27 7 28 30 10 31 0 13 26 28 8 29 31 11 32 1 14 2 4 17 5 7 20 8 10 23 11 13 26 14 16 29 17 19 32 20 22 2 3 5 18 6 8 21 9 11 24 12 14 27 15 17 30 18 20 0 21 23 3 24 26 6 27 29 9 30 32 12 31 1 22 1 4 25 4 7 28 7 10 31 10 13 1 13 16 4 16 19 7 19 22 10 22 25 13 25 28 16 11 14 2 14 17 5 17 20 8 20 23 11 23 26 14 26 29 17 29 32 20 32 2 23 2 5 26 5 8 29 9 12 0 12 15 3 15 18 6 18 21 9 21 24 12 24 27 15 27 30 18 30 0 21 0 3 24 3 6 27 7 13 32 10 16 2 13 19 5 16 22 8 19 25 11 22 28 14 25 31 17 28 1 20 31 4 23 1 7 26 23 29 15 26 32 18 29 2 21 32 5 24 2 8 27 5 11 30 8 14 0 11 17 3 14 20 6 17 23 9 24 30 16 27 0 19 30 3 22 0 6 25 3 9 28 6 12 31 9 15 1 12 18 4 15 21 7 18 24 10 10 17 27 13 20 30 16 23 0 19 26 3 22 29 6 25 32 9 28 2 12 31 5 15 1 8 18 4 11 21 20 21 25 23 24 28 26 27 31 29 30 1 32 0 4 2 3 7 5 6 10 8 9 13 11 12 16 14 15 19 6 13 23 9 16 26 12 19 29 15 22 32 18 25 2 21 28 5 24 31 8 27 1 11 30 4 14 0 7 17 5 12 22 8 15 25 11 18 28 14 21 31 17 24 1 20 27 4 23 30 7 26 0 10 29 3 13 32 6 16 4 5 9 7 8 12 10 11 15 13 14 18 16 17 21 19 20 24 22 23 27 25 26 30 28 29 0 31 32 3 6 7 11 9 10 14 12 13 17 15 16 20 18 19 23 21 22 26 24 25 29 27 28 32 30 31 2 0 1 5 4 15 26 7 18 29 10 21 32 13 24 2 16 27 5 19 30 8 22 0 11 25 3 14 28 6 17 31 9 20
system_cnt = 25
KTS number 25 is not 3-colourable
\fi

4-colouring: {\tt 111112222111322223133434434334444}
%CC sizes:  9  8  8  8
%KTS 25 is 4-chromatic

\item
\begin{tabbing}
$\cP=\big\{$\=
$\{ 1, 3, 16\},\{ 5, 7, 20\},\{ 15, 17, 30\},\{ 28, 31, 19\},\{ 11, 14, 2\},\{ 21, 24, 12\}$,
\\ \> $\{ 4, 10, 29\},\{ 26, 32, 18\},\{ 0, 6, 25\},\{ 22, 23, 27\},\{ 8, 9, 13\}
\big\}$
\end{tabbing}

$\cS=\big\{ \{3, 10, 20\},\{ 2, 9, 19\},\{ 1, 8, 18\},\{ 3, 4, 8\},\{ 1, 12, 23\} \big\}$
\iffalse
 4 6 19 7 9 22 10 12 25 13 15 28 16 18 31 19 21 1 22 24 4 25 27 7 28 30 10 31 0 13 8 10 23 11 13 26 14 16 29 17 19 32 20 22 2 23 25 5 26 28 8 29 31 11 32 1 14 2 4 17 18 20 0 21 23 3 24 26 6 27 29 9 30 32 12 0 2 15 3 5 18 6 8 21 9 11 24 12 14 27 31 1 22 1 4 25 4 7 28 7 10 31 10 13 1 13 16 4 16 19 7 19 22 10 22 25 13 25 28 16 14 17 5 17 20 8 20 23 11 23 26 14 26 29 17 29 32 20 32 2 23 2 5 26 5 8 29 8 11 32 24 27 15 27 30 18 30 0 21 0 3 24 3 6 27 6 9 30 9 12 0 12 15 3 15 18 6 18 21 9 7 13 32 10 16 2 13 19 5 16 22 8 19 25 11 22 28 14 25 31 17 28 1 20 31 4 23 1 7 26 29 2 21 32 5 24 2 8 27 5 11 30 8 14 0 11 17 3 14 20 6 17 23 9 20 26 12 23 29 15 3 9 28 6 12 31 9 15 1 12 18 4 15 21 7 18 24 10 21 27 13 24 30 16 27 0 19 30 3 22 25 26 30 28 29 0 31 32 3 1 2 6 4 5 9 7 8 12 10 11 15 13 14 18 16 17 21 19 20 24 11 12 16 14 15 19 17 18 22 20 21 25 23 24 28 26 27 31 29 30 1 32 0 4 2 3 7 5 6 10 6 13 23 9 16 26 12 19 29 15 22 32 18 25 2 21 28 5 24 31 8 27 1 11 30 4 14 0 7 17 5 12 22 8 15 25 11 18 28 14 21 31 17 24 1 20 27 4 23 30 7 26 0 10 29 3 13 32 6 16 4 11 21 7 14 24 10 17 27 13 20 30 16 23 0 19 26 3 22 29 6 25 32 9 28 2 12 31 5 15 6 7 11 9 10 14 12 13 17 15 16 20 18 19 23 21 22 26 24 25 29 27 28 32 30 31 2 0 1 5 4 15 26 7 18 29 10 21 32 13 24 2 16 27 5 19 30 8 22 0 11 25 3 14 28 6 17 31 9 20
system_cnt = 26
KTS number 26 is not 3-colourable
\fi

4-colouring: {\tt 111112222111322223133434434334444}
%CC sizes:  9  8  8  8
%KTS 26 is 4-chromatic

\item
\begin{tabbing}
$\cP=\big\{$\=
$\{ 1, 3, 16\},\{ 20, 22, 2\},\{ 30, 32, 12\},\{ 7, 10, 31\},\{ 26, 29, 17\},\{ 15, 18, 6\}$,
\\ \> $\{ 19, 25, 11\},\{ 8, 14, 0\},\{ 21, 27, 13\},\{ 4, 5, 9\},\{ 23, 24, 28\}
\big\}$
\end{tabbing}

$\cS=\big\{ \{3, 10, 20\},\{ 2, 9, 19\},\{ 1, 8, 18\},\{ 3, 4, 8\},\{ 1, 12, 23\} \big\}$
\iffalse
 4 6 19 7 9 22 10 12 25 13 15 28 16 18 31 19 21 1 22 24 4 25 27 7 28 30 10 31 0 13 23 25 5 26 28 8 29 31 11 32 1 14 2 4 17 5 7 20 8 10 23 11 13 26 14 16 29 17 19 32 0 2 15 3 5 18 6 8 21 9 11 24 12 14 27 15 17 30 18 20 0 21 23 3 24 26 6 27 29 9 10 13 1 13 16 4 16 19 7 19 22 10 22 25 13 25 28 16 28 31 19 31 1 22 1 4 25 4 7 28 29 32 20 32 2 23 2 5 26 5 8 29 8 11 32 11 14 2 14 17 5 17 20 8 20 23 11 23 26 14 18 21 9 21 24 12 24 27 15 27 30 18 30 0 21 0 3 24 3 6 27 6 9 30 9 12 0 12 15 3 22 28 14 25 31 17 28 1 20 31 4 23 1 7 26 4 10 29 7 13 32 10 16 2 13 19 5 16 22 8 11 17 3 14 20 6 17 23 9 20 26 12 23 29 15 26 32 18 29 2 21 32 5 24 2 8 27 5 11 30 24 30 16 27 0 19 30 3 22 0 6 25 3 9 28 6 12 31 9 15 1 12 18 4 15 21 7 18 24 10 7 8 12 10 11 15 13 14 18 16 17 21 19 20 24 22 23 27 25 26 30 28 29 0 31 32 3 1 2 6 26 27 31 29 30 1 32 0 4 2 3 7 5 6 10 8 9 13 11 12 16 14 15 19 17 18 22 20 21 25 6 13 23 9 16 26 12 19 29 15 22 32 18 25 2 21 28 5 24 31 8 27 1 11 30 4 14 0 7 17 5 12 22 8 15 25 11 18 28 14 21 31 17 24 1 20 27 4 23 30 7 26 0 10 29 3 13 32 6 16 4 11 21 7 14 24 10 17 27 13 20 30 16 23 0 19 26 3 22 29 6 25 32 9 28 2 12 31 5 15 6 7 11 9 10 14 12 13 17 15 16 20 18 19 23 21 22 26 24 25 29 27 28 32 30 31 2 0 1 5 4 15 26 7 18 29 10 21 32 13 24 2 16 27 5 19 30 8 22 0 11 25 3 14 28 6 17 31 9 20
system_cnt = 27
KTS number 27 is not 3-colourable
\fi

4-colouring: {\tt 111112222111322223133434434334444}
%CC sizes:  9  8  8  8
%KTS 27 is 4-chromatic

\item
\begin{tabbing}
$\cP=\big\{$\=
$\{ 1, 3, 21\},\{ 26, 28, 13\},\{ 18, 20, 5\},\{ 7, 10, 19\},\{ 29, 32, 8\},\{ 12, 15, 24\}$,
\\ \> $\{ 16, 22, 30\},\{ 17, 23, 31\},\{ 0, 6, 14\},\{ 4, 11, 27\},\{ 2, 9, 25\}
\big\}$
\end{tabbing}

$\cS=\big\{ \{3, 10, 26\},\{ 1, 2, 6\},\{ 2, 3, 7\},\{ 3, 4, 8\},\{ 1, 12, 23\} \big\}$
\iffalse
 4 6 24 7 9 27 10 12 30 13 15 0 16 18 3 19 21 6 22 24 9 25 27 12 28 30 15 31 0 18 29 31 16 32 1 19 2 4 22 5 7 25 8 10 28 11 13 31 14 16 1 17 19 4 20 22 7 23 25 10 21 23 8 24 26 11 27 29 14 30 32 17 0 2 20 3 5 23 6 8 26 9 11 29 12 14 32 15 17 2 10 13 22 13 16 25 16 19 28 19 22 31 22 25 1 25 28 4 28 31 7 31 1 10 1 4 13 4 7 16 32 2 11 2 5 14 5 8 17 8 11 20 11 14 23 14 17 26 17 20 29 20 23 32 23 26 2 26 29 5 15 18 27 18 21 30 21 24 0 24 27 3 27 30 6 30 0 9 0 3 12 3 6 15 6 9 18 9 12 21 19 25 0 22 28 3 25 31 6 28 1 9 31 4 12 1 7 15 4 10 18 7 13 21 10 16 24 13 19 27 20 26 1 23 29 4 26 32 7 29 2 10 32 5 13 2 8 16 5 11 19 8 14 22 11 17 25 14 20 28 3 9 17 6 12 20 9 15 23 12 18 26 15 21 29 18 24 32 21 27 2 24 30 5 27 0 8 30 3 11 7 14 30 10 17 0 13 20 3 16 23 6 19 26 9 22 29 12 25 32 15 28 2 18 31 5 21 1 8 24 5 12 28 8 15 31 11 18 1 14 21 4 17 24 7 20 27 10 23 30 13 26 0 16 29 3 19 32 6 22 6 13 29 9 16 32 12 19 2 15 22 5 18 25 8 21 28 11 24 31 14 27 1 17 30 4 20 0 7 23 4 5 9 7 8 12 10 11 15 13 14 18 16 17 21 19 20 24 22 23 27 25 26 30 28 29 0 31 32 3 5 6 10 8 9 13 11 12 16 14 15 19 17 18 22 20 21 25 23 24 28 26 27 31 29 30 1 32 0 4 6 7 11 9 10 14 12 13 17 15 16 20 18 19 23 21 22 26 24 25 29 27 28 32 30 31 2 0 1 5 4 15 26 7 18 29 10 21 32 13 24 2 16 27 5 19 30 8 22 0 11 25 3 14 28 6 17 31 9 20
system_cnt = 28
KTS number 28 is not 3-colourable
\fi

4-colouring: {\tt 111112222111322213334342434434434}
%CC sizes:  9  8  8  8
%KTS 28 is 4-chromatic

\item
\begin{tabbing}
$\cP=\big\{$\=
$\{ 1, 13, 19\},\{ 23, 11, 5\},\{ 3, 15, 21\},\{ 2, 4, 7\},\{ 12, 14, 17\},\{ 28, 30, 0\}$,
\\ \> $\{ 6, 16, 25\},\{ 10, 20, 29\},\{ 8, 18, 27\},\{ 24, 31, 32\},\{ 9, 22, 26\}
\big\}$
\end{tabbing}

$\cS=\big\{ \{25, 32, 0\},\{ 26, 0, 1\},\{ 10, 23, 27\},\{ 11, 24, 28\},\{ 1, 12, 23\} \big\}$
\iffalse
 4 16 22 7 19 25 10 22 28 13 25 31 16 28 1 19 31 4 22 1 7 25 4 10 28 7 13 31 10 16 26 14 8 29 17 11 32 20 14 2 23 17 5 26 20 8 29 23 11 32 26 14 2 29 17 5 32 20 8 2 6 18 24 9 21 27 12 24 30 15 27 0 18 30 3 21 0 6 24 3 9 27 6 12 30 9 15 0 12 18 5 7 10 8 10 13 11 13 16 14 16 19 17 19 22 20 22 25 23 25 28 26 28 31 29 31 1 32 1 4 15 17 20 18 20 23 21 23 26 24 26 29 27 29 32 30 32 2 0 2 5 3 5 8 6 8 11 9 11 14 31 0 3 1 3 6 4 6 9 7 9 12 10 12 15 13 15 18 16 18 21 19 21 24 22 24 27 25 27 30 9 19 28 12 22 31 15 25 1 18 28 4 21 31 7 24 1 10 27 4 13 30 7 16 0 10 19 3 13 22 13 23 32 16 26 2 19 29 5 22 32 8 25 2 11 28 5 14 31 8 17 1 11 20 4 14 23 7 17 26 11 21 30 14 24 0 17 27 3 20 30 6 23 0 9 26 3 12 29 6 15 32 9 18 2 12 21 5 15 24 27 1 2 30 4 5 0 7 8 3 10 11 6 13 14 9 16 17 12 19 20 15 22 23 18 25 26 21 28 29 12 25 29 15 28 32 18 31 2 21 1 5 24 4 8 27 7 11 30 10 14 0 13 17 3 16 20 6 19 23 28 2 3 31 5 6 1 8 9 4 11 12 7 14 15 10 17 18 13 20 21 16 23 24 19 26 27 22 29 30 29 3 4 32 6 7 2 9 10 5 12 13 8 15 16 11 18 19 14 21 22 17 24 25 20 27 28 23 30 31 13 26 30 16 29 0 19 32 3 22 2 6 25 5 9 28 8 12 31 11 15 1 14 18 4 17 21 7 20 24 14 27 31 17 30 1 20 0 4 23 3 7 26 6 10 29 9 13 32 12 16 2 15 19 5 18 22 8 21 25 4 15 26 7 18 29 10 21 32 13 24 2 16 27 5 19 30 8 22 0 11 25 3 14 28 6 17 31 9 20
system_cnt = 29
KTS number 29 is not 3-colourable
\fi

4-colouring: {\tt 111112221212121232334433444343434}
%CC sizes:  9  8  8  8
%KTS 29 is 4-chromatic

\item
\begin{tabbing}
$\cP=\big\{$\=
$\{ 32, 0, 16\}$,
$\{ 1, 10, 15\}$,
$\{ 2, 22, 23\}$,
$\{ 3, 5, 9\}$,
$\{ 4, 12, 29\}$,
$\{ 6, 7, 18\}$,
\\ \>
$\{ 8, 27, 30\}$,
$\{ 11, 14, 24\}$,
$\{ 13, 20, 28\}$,
$\{ 17, 26, 31\}$,
$\{ 19, 21, 25\}
\big\}$
\end{tabbing}

\iffalse
 32 1 17 2 11 16 3 23 24 4 6 10 5 13 30 7 8 19 9 28 31 12 15 25 14 21 29 18 27 0 20 22 26 32 2 18 3 12 17 4 24 25 5 7 11 6 14 31 8 9 20 10 29 0 13 16 26 15 22 30 19 28 1 21 23 27 32 3 19 4 13 18 5 25 26 6 8 12 7 15 0 9 10 21 11 30 1 14 17 27 16 23 31 20 29 2 22 24 28 32 4 20 5 14 19 6 26 27 7 9 13 8 16 1 10 11 22 12 31 2 15 18 28 17 24 0 21 30 3 23 25 29 32 5 21 6 15 20 7 27 28 8 10 14 9 17 2 11 12 23 13 0 3 16 19 29 18 25 1 22 31 4 24 26 30 32 6 22 7 16 21 8 28 29 9 11 15 10 18 3 12 13 24 14 1 4 17 20 30 19 26 2 23 0 5 25 27 31 32 7 23 8 17 22 9 29 30 10 12 16 11 19 4 13 14 25 15 2 5 18 21 31 20 27 3 24 1 6 26 28 0 32 8 24 9 18 23 10 30 31 11 13 17 12 20 5 14 15 26 16 3 6 19 22 0 21 28 4 25 2 7 27 29 1 32 9 25 10 19 24 11 31 0 12 14 18 13 21 6 15 16 27 17 4 7 20 23 1 22 29 5 26 3 8 28 30 2 32 10 26 11 20 25 12 0 1 13 15 19 14 22 7 16 17 28 18 5 8 21 24 2 23 30 6 27 4 9 29 31 3 32 11 27 12 21 26 13 1 2 14 16 20 15 23 8 17 18 29 19 6 9 22 25 3 24 31 7 28 5 10 30 0 4 32 12 28 13 22 27 14 2 3 15 17 21 16 24 9 18 19 30 20 7 10 23 26 4 25 0 8 29 6 11 31 1 5 32 13 29 14 23 28 15 3 4 16 18 22 17 25 10 19 20 31 21 8 11 24 27 5 26 1 9 30 7 12 0 2 6 32 14 30 15 24 29 16 4 5 17 19 23 18 26 11 20 21 0 22 9 12 25 28 6 27 2 10 31 8 13 1 3 7 32 15 31 16 25 30 17 5 6 18 20 24 19 27 12 21 22 1 23 10 13 26 29 7 28 3 11 0 9 14 2 4 8
system_cnt = 30
KTS number 30 is not 3-colourable
\fi

4-colouring: {\tt 111111222211332221333433434444244}
%CC sizes:  9  8  8  8
%KTS 30 is 4-chromatic
\end{enumerate}

The following system is one of the $13$ $3$-chromatic 1-rotational KTS(33) over the cyclic group described in~\cite{BurattiZuanni}.  It is number 59a in the list of these systems~\cite{BurattiZuanniAppendix}.
We give base blocks over the point set $\mathbb{Z}_{32} \times \{\infty\}$; the remaining blocks can be obtained by developing modulo $32$.  The $3$-colouring is indicated by a sequence of length $33$, with the first 32 entries being the colours of $0, \ldots, 31$ in order and the last entry the colour of $\infty$.

\[
\begin{array}{l}
\cP=\big\{ \{1,3,9\}, \{2,5,27\}, \{6,15,20\}, \{7,24,28\}, \{13,14,26\}, \{\infty,0,16\} \big\} \\[1ex]
\mbox{3-colouring: } {\tt 111211312131322322112233123233132}
\end{array}
\]

\end{document}